\newtheorem{theorem}{Theorem}[section]
\newtheorem{lemma}{Lemma}[section]
\newtheorem{proposition}{Proposition}[section]
\theoremstyle{definition}
\newtheorem{definition}{Definition}[section]
\theoremstyle{remark}
\newtheorem{remark}{Remark}[section]
\numberwithin{equation}{section}
\newtheorem{corollary}{Corollary}[section]
\numberwithin{equation}{section}
\newcommand{\M}{{\mathcal M}}
\newcommand{\C}{\mathbb C}
\newcommand{\Z}{\mathbb Z}
\newcommand{\R}{\mathbb R}
\newcommand{\Out}{{\rm Out}}
\newcommand{\Aut}{{\rm Aut}}
\begin{document}

\title[Braided surfaces]{Braided surfaces and their characteristic maps}
\author[L.Funar]{Louis Funar}
\author[P.G.Pagotto]{Pablo G. Pagotto}
\address{Institut Fourier BP 74, UMR 5582, Laboratoire de Math\'ematiques, Universit\'e Grenoble Alpes, CS 40700, 38058 Grenoble cedex 9, France}
\email{louis.funar@univ-grenoble-alpes.fr}
\email{pgp\underline{  }2008@hotmail.com}

%\date{February 20, 2008}
\begin{abstract}
We show that branched coverings of surfaces of large enough genus arise as 
characteristic maps of braided surfaces that is, lift to embeddings in the product of the surface with $\R^2$.
This result is nontrivial already for unramified coverings, in which case the lifting problem is well-known to reduce to the purely algebraic problem of factoring the monodromy map to the symmetric group $S_n$ through the braid group $B_n$. In our approach, this factorization is often achieved as a consequence of a stronger property: a factorization through a free group. 
In the reverse direction we show that any non-abelian surface group 
has infinitely many finite simple non-abelian groups quotients
with characteristic kernels which do not contain any simple loop and 
hence the quotient maps do not factor through free groups. 
By a pullback construction, finite dimensional Hermitian representations of braid groups provide 
invariants for the braided surfaces. We show that 
the strong equivalence classes of braided surfaces are separated by such invariants if and only if they are profinitely separated. 
 
\vspace{0.2cm}
 
\noindent MSC Class: 57R45, 57 R70, 58K05.

\noindent Keywords: braided surface, branched covering, surface group, mapping class group, braid group, Schur invariants, 2-homology. 
\end{abstract}

\maketitle
\section{Introduction}
The question addressed in the present paper is the description of 
a particular case of 2-dimensional knots, called braided surfaces, up to fiber preserving isotopy. 
\begin{definition}
A {\em braided surface} over some surface $\Sigma$ is an embedding of a surface 
$j:S\to  \Sigma\times \R^2$, such that the composition 
\[S\stackrel{j}{\hookrightarrow} \Sigma\times \R^2\stackrel{p}{\to} \Sigma\]
with the first factor projection $p$ is a branched covering. Throughout this 
paper we only consider {\em locally flat PL} embeddings $j$. 
The composition $p\circ j$ is called the {\em characteristic map} of the braided surface $S$.  
\end{definition}

Braided surfaces over the disk were first considered  and studied by Viro, 
Rudolph (see \cite{Rudolph83}) and later extensively studied by Kamada (\cite{Kamada}). 
A comprehensive survey of the subject could be found in the monograph \cite{Kamada2}.  
Braided surfaces over the torus were introduced more recently in \cite{Nakamura}. 

Equivalence classes of braided surfaces are in one-to-one correspondence 
with a subset of the set of representations of the punctured surface group 
into the braid group up to conjugacy and mapping class group action. 
Our aim is to give some insight about the structure of such discrete representation varieties. 

\begin{definition}
A map $S\to \Sigma$ between surfaces is 
called a {\em 2-prem} if it factors as above as $p\circ j$, where $j$ is an embedding 
and $p$ is the second factor projection $\Sigma\times \R^2\to \Sigma$. 
\end{definition}
Whether all generic smooth maps are 2-prems seems  widely open.   
We refer to the article \cite{Melikhov} of Melikhov  for the state of the art on this question. 
Although branched coverings are not generic the question of whether they 
are 2-prems seems natural.  Our first result gives an affirmative answer in the asymptotic range: 

\begin{theorem}\label{stable}
There exists some $h_{n,m}$ such that any degree $n$ ramified covering of 
the closed orientable surface of genus $g\geq h_{n,m}$ with $m$ branch points occurs as 
a 2-prem. 
\end{theorem}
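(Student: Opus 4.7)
The plan is to translate the realisability question into a lifting problem for the surjection $B_n \to S_n$, and then exploit stability as $g$ grows. Writing $\Gamma = \pi_1(\Sigma \setminus B)$ with standard generators $a_1, b_1, \ldots, a_g, b_g, c_1, \ldots, c_m$ subject to $\prod_i [a_i, b_i] \prod_j c_j = 1$, a degree-$n$ branched cover with branch locus $B$ is determined up to equivalence by its monodromy $\rho \colon \Gamma \to S_n$, with each $\rho(c_j)$ having the cycle type prescribed by the ramification at $b_j$. The cover is the characteristic map of a braided surface precisely when $\rho$ admits a lift $\tilde\rho \colon \Gamma \to B_n$ in which each $\tilde\rho(c_j)$ lies in a specified conjugacy class $\mathcal{C}_j \subset B_n$ arising from a local embedded model of a branched cover in $D \times \R^2$.

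I would first handle the abelianised obstruction. Each ramification branch of index $k$ admits a local model whose braid monodromy is either a positive or a negative $k$-cycle braid, giving independent sign choices that tune the exponent sums $\mathrm{exp}(\tilde c_j) \in \Z$. The surface relation forces $\sum_j \mathrm{exp}(\tilde c_j) = 0$ in $B_n^{\mathrm{ab}} = \Z$, and a combinatorial argument based on Riemann--Hurwitz ensures that the signs can always be arranged so this identity holds; in the case of simple branch points it amounts to $m$ being even, which is automatic.

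Fix such $\tilde c_j$'s and let $\beta = \prod_j \tilde c_j$. What remains is to write $\beta^{-1}$ as $\prod_i [\tilde a_i, \tilde b_i]$ with $\tilde a_i, \tilde b_i$ lifts of $\rho(a_i), \rho(b_i)$. Modifying a single lift by a pure braid $p \in P_n$ alters its commutator by a $B_n$-conjugate of some $[p, \tilde x]$, all lying in $[P_n, B_n]$; hence for a fixed base choice with product $\omega_0$, the set of realisable products is a coset $\omega_0 \cdot [P_n, B_n]$ inside $P_n$. The obstruction to $\beta^{-1}$ lying in this coset is the class of $\omega_0 \beta$ in $P_n/[P_n, B_n]$, which injects into $B_n^{\mathrm{ab}} = \Z$ via the exponent sum; this class vanishes by the previous step, since both $\omega_0$ and $\beta$ have exponent sum zero.

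The main obstacle is the stability step: showing that for $g \geq h_{n,m}$ the constrained commutator word map actually surjects onto the full coset $\omega_0 \cdot [P_n, B_n]$, rather than a proper subset. This reduces to showing that the $B_n$-normal closure of commutators of the form $[p, \tilde a_i]$ and $[p, \tilde b_i]$ with $p \in P_n$ exhausts $[P_n, B_n]$, which uses that the monodromy of a connected cover is transitive so $\{\tilde a_i, \tilde b_i\}$ together with $P_n$ generate $B_n$, and then bounding the commutator length of the relevant target element of $[P_n, B_n]$. The constant $h_{n,m}$ ultimately depends on this commutator length, which grows with the word length of $\beta$ in $B_n$ (so with $m$) and with the algebraic structure of $B_n$ (so with $n$).
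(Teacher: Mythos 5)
Your reduction of the problem to lifting $\rho\colon\pi_1(\Sigma\setminus B)\to S_n$ along $B_n\to S_n$ with peripheral loops landing in prescribed completely splittable conjugacy classes matches the paper's setup, and your identification of $P_n/[P_n,B_n]\cong\Z$ (detected by exponent sum, via the $S_n$-coinvariants of $H_1(P_n)=\Z^{\binom n2}$) is essentially the same calculation the paper carries out inside the Hochschild--Serre spectral sequence for $1\to P_n\to\tilde G\to G\to 1$. But there are two real gaps. First, the abelianised step: you assert that signs can always be arranged so that $\sum_j\mathrm{exp}(\tilde c_j)=0$, invoking ``a combinatorial argument based on Riemann--Hurwitz,'' but this is not given and is not obviously true if the only lifts you allow are the positive and negative cone braids of exponent sum $\pm(k-1)$ per cycle --- a single branch point of cycle type $(3,1,\dots,1)$, which does arise for covers of surfaces of any positive genus, already cannot be cancelled by sign choices alone. (There are additional completely splittable lifts of a $k$-cycle, e.g.\ $\sigma_1\sigma_2^{-1}$ for a $3$-cycle, but you would need to argue that enough exponent sums are available, which you do not do.)

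Second, and more seriously, the heart of your argument --- that for $g\ge h_{n,m}$ the constrained commutator-word map surjects onto the full coset $\omega_0\cdot[P_n,B_n]$ --- is exactly the stability theorem of Dunfield--Thurston (extended from Livingston--Zimmermann), and your sketch does not prove it. The difficulty is that you may only modify lifts $\tilde a_i,\tilde b_i$ by pure braids while keeping their $S_n$-images fixed, so the accessible commutators form a constrained family; arguing that the $B_n$-normal closure of $\{[p,\tilde a_i],[p,\tilde b_i]\}$ exhausts $[P_n,B_n]$ and then ``bounding the commutator length'' is not a proof that every element of the coset is hit with the required uniformity over all $\rho$, $G\subseteq S_n$, and branch data. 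The paper sidesteps this by invoking Proposition~\ref{Dunfield-Thurston} (the stable classification of surjections $\pi_1(\Sigma_g\setminus B)\to G$ by Schur classes in $H_2(G,\mathbf c)$ under the $\Gamma(\Sigma_g\setminus B)\times G$-action), reducing everything to the surjectivity of $H_2(\tilde G)\to H_2(G)$, which it then verifies cleanly: $H_2(G)$ is torsion because $G$ is finite, while $H_1(P_n)_G\cong\Z[S(n)/G]$ is free abelian, so the $d_2$ transgression vanishes. To complete your argument you would either have to prove your surjectivity claim directly --- which is a substantial theorem in itself --- or cite the Dunfield--Thurston stability result, in which case your approach collapses to the paper's.
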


The key ingredient is the description of the mapping class group orbits on 
the space of surface group representations onto a finite group in the stable range, i.e. for large genus $g$. This was done by Dunfield and Thurston (see \cite{DT}) in the closed case and by 
Catanese, L\"onne and Perroni (see \cite{CLP2}) in the branched case. Their results establish
the classification of these orbits by means of some versions of homological Schur invariants. 
Note that the bound $h_{n,m}$ is not explicit.

The genuine classification of these orbits seems much subtler, see 
\cite{Lubo} for a survey of this and related questions. Livingston provided (\cite{Liv1})
examples of distinct orbits with the same homological Schur invariants.

\begin{definition}
The homological {\em Schur invariant} of a 
surjective homomorphism $f:\pi_1(\Sigma)\to G$ of a closed orientable surface group onto a group $G$ 
is the image  $sc(f)\in H_2(G)$  of the fundamental class $[\Sigma]\in H_2(\Sigma)$ by $f$. 
Recall that  such a homomorphism $f:\pi_1(\Sigma)\to G$  is called  {\em elementary}
if it factors through a free group. 
\end{definition}

The null-homologous case, namely where the homological Schur invariant vanishes, corresponds to finding 
whether a surjective homomorphism $f:\pi_1(\Sigma)\to G$ of a closed surface group onto a finite group $G$ 
inducing a trivial map in 2-homology is  elementary. This amounts to estimating the minimal Heegaard 
genus for a 3-manifold group to which $f$ extends, problem which was recently considered by Liechti and 
March\'e for tori (see \cite{LM}). 

These questions arose in relation with the equivalence problem for  
epimorphisms of free groups onto non-abelian simple groups. 
Let $\mathbb F_n$ denote the free group on $n\geq 3$ generators and $\Out(\mathbb F_n)$ its 
{\em outer automorphism} group. 
Wiegold conjectured (see \cite{Lubo}) that for any finite simple non-abelian group $G$ the group  
$\Out(\mathbb F_n)$ acts transitively on the set of conjugacy classes of surjective homomorphisms $\mathbb F_n \to G$.
A weaker statement which allows additional stabilizations  is known to hold (see \cite{McW}). Also 
Gilman (\cite{Gilman}) and Evans  (\cite{Evans,Evans2})  proved that there exists a large orbit of $\Out(\mathbb F_n)$ on  
this set. In  \cite{BL,DT} the authors proved that  the action 
of the mapping class groups $\Gamma(\Sigma)$ on the set of conjugacy classes of surjective homomorphisms 
onto finite groups $G$ also has at least one large orbit.  

Wiegold's conjecture implies that there is no isolated orbit, namely there is no finite simple quotient $G$ which is characteristic. Recall that a subgroup $H\subseteq G$  
is a {\em characteristic}  subgroup of $G$ if it is invariant by all automorphisms of $G$. In this case  
$G/H$ is called a {\em characteristic quotient} of $G$.

In  \cite{FL} one proved that there exist  finite simple non-abelian quotients of surface groups which are 
characteristic, by using quantum representations.  Conjugacy classes of surjective homomorphisms onto characteristic quotients 
of $\pi_1(\Sigma)$ are therefore isolated orbits  for the action of the mapping class group, contrasting 
with large orbits from \cite{BL,DT}.  
An easy consequence is that all these quotient epimorphisms are 
non-elementary, so that the classification 
of mapping class group orbits fundamentally differs from the stable one. This improves previous work of 
Livingston (\cite{Liv1,Liv2}) and Pikaart (\cite{Pikaart})(see Propositions \ref{nonelementary} and \ref{quantum}): 

\begin{theorem}\label{simplequotients}
For any $g\geq 2$ there exist infinitely many simple non-abelian groups $G$ and 
surjective homomorphisms of the closed genus $g$ orientable surface  onto $G$, such that  
the kernels are characteristic and do not contain any simple loop homotopy class. 
In particular, these homomorphisms are not elementary. 
\end{theorem}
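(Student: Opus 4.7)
The plan is to combine the two propositions foreshadowed in the introduction. First I would invoke the quantum-representation construction of \cite{FL}, packaged as Proposition~\ref{quantum}, to produce for each $g\ge 2$ an infinite sequence of surjections $f_n:\pi_1(\Sigma_g)\to G_n$ onto pairwise nonisomorphic finite nonabelian simple groups, whose kernels are characteristic in the sense of being $\Aut(\pi_1(\Sigma_g))$-invariant. Varying the level (or the prime appearing in the arithmetic specialisation) of the underlying TQFT yields the infinitude of the family.

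To check that no simple loop lies in $\ker f_n$, I would combine the characteristic hypothesis with the standard fact that unoriented isotopy classes of simple closed curves on $\Sigma_g$ fall into finitely many mapping class group orbits: one of non-separating curves, and one of separating curves of genus $h$ for each $1\le h\le \lfloor g/2\rfloor$. Since $\ker f_n$ is MCG-invariant, it suffices to test one representative per orbit. The non-separating case is immediate: pick a standard symplectic generating set $a_1,b_1,\ldots,a_g,b_g$, all of whose elements are non-separating simple loops. If one such element lay in $\ker f_n$, then by MCG-invariance all would, so $f_n$ would vanish on a generating set and hence be trivial, contradicting surjectivity onto a nontrivial $G_n$.

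The main technical obstacle is the separating case, which requires $f_n([a_1,b_1]\cdots[a_h,b_h])\ne 1$ in $G_n$ for every $1\le h\le g-1$. I would attack this via the explicit TQFT description of $f_n$: a separating simple loop of genus $h$ is encoded diagrammatically by a colored cycle encircling a subsurface of genus $h$, and its image under the quantum representation admits a closed-form expression in terms of $S$-matrix entries which is generically nonzero. Discarding the finitely many exceptional levels on which a separating simple loop happens to be killed still leaves infinitely many of the $f_n$ satisfying the required property.

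Finally, the "in particular" assertion is the content of Proposition~\ref{nonelementary}: were some $f:\pi_1(\Sigma_g)\to G$ elementary, it would factor through $\pi_1(H_g)=F_g$ for a handlebody $H_g$ bounding $\Sigma_g$, and the kernel of the surjection $\pi_1(\Sigma_g)\to \pi_1(H_g)$ is normally generated by a maximal system of $g$ disjoint essential simple closed curves bounding meridian disks. In particular $\ker f$ would contain simple loops; since our $\ker f_n$ contain none, each $f_n$ is nonelementary.
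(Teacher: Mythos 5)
Your overall architecture matches the paper's: invoke the Zariski-dense quantum representations of \cite{FL} plus strong approximation to get infinitely many characteristic finite simple quotients, reduce to finitely many MCG-orbit representatives of simple loops, treat the nonseparating representatives, treat the separating ones, and deduce nonelementarity. Your handling of the nonseparating case and of the ``in particular'' implication reproduces Proposition~\ref{nonelementary}: a characteristic kernel containing one nonseparating simple loop would contain a full symplectic generating system and kill $f$, and an elementary $f$ necessarily kills meridians.

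The genuine gap is the separating case, which you yourself flag as the main obstacle and then only sketch. The paper's Proposition~\ref{quantum} does not use $S$-matrix entries; instead it observes that a simple loop $\gamma$ corresponds, under the Birman embedding $\pi_1(\Sigma_g)\hookrightarrow\Gamma(\Sigma_g^1)$, to the point-pushing map $T_{\gamma^+}T_{\gamma^-}$, whose image $\rho_p(T_{\gamma^+}T_{\gamma^-})$ has an explicitly computable finite order (depending on whether $\gamma$ separates and on the parity of $p$). Because only finitely many orbit representatives and finitely many powers occur, and the matrix entries lie in a finitely generated ring $\mathcal O$, these elements stay nontrivial after reduction modulo all but finitely many primes $q$, so the order — hence nontriviality of $f_{p,q}(\gamma)$ — survives in $P\mathbb G_p(\mathbf F_q)$. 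This argument actually handles nonseparating and separating loops uniformly. Your appeal to an ``$S$-matrix closed-form expression'' does not obviously compute the image of a point-pushing map (which is a product of commuting Dehn twists, not a curve operator), and ``discarding finitely many exceptional levels'' conflates the roles of $p$ and $q$: for fixed admissible $p$ you must argue nontriviality for all but finitely many $q$, which is precisely what the integrality-of-$\mathcal O$ plus finiteness-of-representatives mechanism gives you. You should replace the $S$-matrix sketch with this order-preservation-under-reduction argument.
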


\begin{remark}
Given  an embedding $G\subset S_n$, if the surjective homomorphism $\pi_1(\Sigma)\to G$ is elementary, 
then $f$ can be lifted to $\pi_1(\Sigma)\to B_n$. We don't know whether  the non-elementary 
homomorphisms from Theorem \ref{simplequotients} admit lifts to $B_n$, see also Remark \ref{indepn}. 
\end{remark}

\begin{definition}
Two braided surfaces $j_i:S\to \Sigma\times \R^2$, $i=0,1$ over $\Sigma$ are {\em (Hurwitz) equivalent} if there exists some ambient isotopy $h_t:\Sigma\times \R^2\to \Sigma\times \R^2$, $h_0=id$ such that 
$h_t$ is fiber-preserving and $h_1\circ j_0=j_1$. Recall that $h_t$ is {\em fiber-preserving} if there exists a homeomorphism $\varphi_t: \Sigma\to \Sigma$ such that $p\circ h_t=\varphi_t\circ p$. There is no loss of generality to impose 
$\varphi_t$ to be an isotopy of $\Sigma$. Assume that the branch loci of the branched coverings $j_i\circ p$ are the same finite set $B$.  When $\varphi_t$ can be taken to be isotopy fixing pointwise the branch locus $B$, we say that the braided surfaces are {\em strongly equivalent}. These definitions extend naturally to the case when these surfaces have boundary by requiring isotopies to fix the boundary points of $j_i(S)$. 
\end{definition}

In the last part of this paper we show that finite dimensional Hermitian representations of braid groups provide invariants for the strong equivalence classes of braided surfaces, by a standard pullback construction (see section \ref{invariants}), called spherical functions. 
We then show that the topological information underlying the spherical functions 
is of profinite nature (see Theorem \ref{profinite} for the general statement): 

\begin{theorem}\label{separatebraid}
Strong equivalence classes of braided surfaces are separated by some spherical function if and only if 
they are profinitely separated. 
\end{theorem}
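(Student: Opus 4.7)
The plan is to prove the two implications separately, relating finite-dimensional Hermitian representations of the braid group $B_n$ to the finite quotients controlling the profinite completion $\widehat{B_n}$. I would begin by fixing notation: a braided surface has a characteristic map $\rho:\pi_1(\Sigma')\to B_n$ on the branch-punctured surface, and a finite-dimensional Hermitian representation $\sigma:B_n\to U(N)$ yields, via the pullback construction of section \ref{invariants}, the associated spherical function; two classes are profinitely separated when some finite quotient $q:B_n\to Q$ distinguishes their characteristic maps under the relevant mapping class group and conjugation action.

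For the easy direction, profinite separation implies spherical separation. Suppose $q:B_n\to Q$ distinguishes $[\rho_1]$ from $[\rho_2]$. The left regular representation $Q\hookrightarrow U(|Q|)$ is faithful and unitary, so $\sigma=\mathrm{reg}\circ q$ is a finite-dimensional Hermitian representation of $B_n$ whose associated spherical function factors through $q$ and therefore separates $[\rho_1]$ from $[\rho_2]$ by construction. The only step to verify is that the pullback formula from section \ref{invariants} transfers the separation faithfully from the level of $Q$ to the level of the spherical function.

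For the harder reverse direction, assume $[\rho_1]$ and $[\rho_2]$ are profinitely indistinguishable, so that their images agree in $\widehat{B_n}$ modulo the (continuous extension of the) action. I would show every spherical function takes the same value on them by means of an approximation argument. Given $\sigma:B_n\to U(N)$, the topological closure of $\sigma(B_n)$ is a compact Lie group $K\subset U(N)$. Using the residual finiteness of $B_n$ (which follows from the Bigelow--Krammer linearity together with Malcev's theorem) combined with a Peter--Weyl decomposition of matrix coefficients on $K$, one aims to express the values of the spherical function attached to $\sigma$ as limits of values of spherical functions attached to representations factoring through finite quotients of $B_n$. Since, under the profinite hypothesis, each such approximating spherical function takes identical values on $\rho_1$ and $\rho_2$, passing to the limit yields equality of the original spherical function.

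The main obstacle is precisely this approximation step. The difficulty is that a generic finite-dimensional unitary representation of $B_n$, such as Burau at a non-root-of-unity parameter, has infinite image and can take genuinely transcendental values, so one cannot hope to reduce directly to finite quotients. Showing that these transcendental values are nevertheless encoded by the profinite data of the $\rho_i$ is a non-trivial density/rigidity statement, and I expect it to be the technical core of the general Theorem \ref{profinite} to which the introduction refers; it presumably amounts to controlling how the continuous unitary dual of the Bohr compactification of $B_n$ relates, on the image of $\rho_i$, to the dual of $\widehat{B_n}$.
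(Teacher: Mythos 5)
Your easy direction (profinite separation implies spherical separation) is essentially the paper's argument, modulo some hand-waving: the paper does not invoke the regular representation as such but rather Proposition \ref{sphericalseparate}, which shows via Wielandt's lemma and Frobenius reciprocity that spherical functions attached to representations of a finite group $F$ separate the points of $K_F\backslash F/K_F$. Your claim that the regular representation's spherical functions ``separate by construction'' is not immediate and would itself require Proposition \ref{sphericalseparate} (or its content) to justify, since one still has to produce a pair of $K$-invariant vectors giving a separating matrix coefficient. Also note the paper works with Hermitian representations in the broad sense (not necessarily positive definite), not just unitary ones into $U(N)$; this is relevant because the pull-backs of Burau-type representations used in the paper need not be unitary.

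For the harder direction your proposed route — approximating a general Hermitian representation through its compact closure $K$, Peter--Weyl, and a limit over finite quotients — is not the paper's argument, and as you yourself observe it leaves the core density/rigidity claim unproved. In fact it is unclear it can be made to work: a transcendental matrix coefficient need not be any kind of limit of values of spherical functions factoring through finite quotients of $B_n$, and the Bohr compactification of $B_n$ is much larger than $\widehat{B_n}$. The paper sidesteps the issue entirely by an arithmetic specialization argument. Since $H$ is finitely generated, a finite-dimensional Hermitian representation $V$ is defined over a finitely generated subring $\mathcal O\subset\C$, and one can enlarge $\mathcal O$ so that the separating $K$-invariant vectors $u,v$ have coordinates in $\mathcal O$. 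If $\phi_{u,v}(x)\ne\phi_{u,v}(y)$, then for all but finitely many prime ideals $\mathfrak p$ of $\mathcal O$ the inequality persists modulo $\mathfrak p$; reducing $V$, $u$, $v$ mod $\mathfrak p$ yields a representation of $H$ over a finite ring whose spherical function already separates $x$ from $y$. This is the missing idea in your proposal: one does not approximate analytically, one specializes algebraically. The arithmetic specialization converts a single transcendental separating spherical function into a finite one in one step, with no limiting process at all.
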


\noindent{\bf Acknowledgements.} The authors are grateful to P. Bellingeri, G. Kuperberg, 
L. Liechti, M. L\"onne, J. March\'e,  J.B. Meilhan, S. Melikhov, E. Samperton and E. Wagner for useful conversations and to the referees for pointing out several errors and incomplete 
arguments in the previous versions of this paper and improving the exposition.

\section{Braided surfaces} 
Consider a braided surface over  a closed orientable surface  $\Sigma$, namely  a locally flat PL embedding of a 
closed orientable surface  $j:S\to  \Sigma\times \R^2$, such that the composition  $p\circ j$  is a branched covering. 
We might consider, more generally, that $S$ is embedded in a (orientable) plane bundle 
over $\Sigma$. However, the existence of nontrivial examples, for instance that  
some connected unramified covering of degree $>1$ arise as a characteristic map, 
implies that the plane bundle should be trivial (see \cite{Edmonds2}).

A degree $n$ branched covering $S\to \Sigma$ determines a homomorphism 
$f:\pi_1(\Sigma\setminus B,*)\to S_n$, where $B$ is the branch locus of $F$, called 
{\em monodromy} homomorphism. Choose small simple loops 
$\gamma_i$ each one encircling one branch point $b_i$ of $B$, which will be called 
{\em peripheral} loops or homotopy classes in the sequel. 

The {\em degree} $n$ and {\em branch locus} $B$ of a braided surface $j:S\to  \Sigma\times \R^2$ is 
are, respectively, the degree and branch locus of its associated branched covering map $p\circ j$. 
Observe that the projection map  
\[ p|_{(\Sigma\times \R^2)\setminus j(S)}:  (\Sigma\times \R^2)\setminus j(S)\to \Sigma\] 
restricts to a locally trivial fiber bundle over $\Sigma-B$. The monodromy of this 
locally trivial fiber bundle is then a homomorphism 
\[ f:\pi_1(\Sigma\setminus B)\to B_n\]
into the braid group $B_n$ on $n$-strands, which will be called the {\em braid monodromy} of the braided surface in the sequel.

Recall that two branched coverings $F_1, F_2:S\to \Sigma$ are 
{\em equivalent} if there are homeomorphisms $\Phi:S\to S$ and $\phi:\Sigma\to \Sigma$ 
such that $F_1\circ \Phi=\phi\circ F_2$. They are further {\em strongly equivalent}  when there is some $\phi$ which is isotopic to the identity rel the branch locus.

Hurwitz proved that   
strong equivalence classes of branched coverings with given  $g$ genus of $\Sigma$, $B$ and $n$, bijectively correspond to conjugacy classes of monodromy homomorphisms having nontrivial image on every peripheral loop. 
Denote by $\Gamma(\Sigma)$ the pure {\em mapping class group} of the possible punctured surface $\Sigma$.  
Moreover, equivalence classes of branched coverings bijectively correspond to orbits of 
the mapping class group $\Gamma(\Sigma\setminus B)$ on the set of conjugacy classes of monodromy homomorphisms
as above.

We will show that a similar result holds in the case of braided surfaces.  
Let $\gamma\subset \Sigma\setminus B$ be an embedded loop. Its preimage 
$\ell_{\gamma}=p^{-1}(\gamma)\cap j(S)$ is a link in the open solid torus $p^{-1}(\gamma)\simeq \gamma\times \R^2$. The link $\ell_{\gamma}$ is the link closure $\widehat{b}$ of a braid 
$b\in B_n$ within the solid torus, because the projection $p|_{\ell_{\gamma}}:\ell_{\gamma}\to \gamma$ is an unramified covering. 
Note that the link $\ell_{\gamma}$ in the solid torus determines and is determined 
by the conjugacy class of $b$ in $B_n$. 

If $\gamma$ were a peripheral loop, let us choose a bounding disk $\delta$ embedded 
in $\Sigma$, which contains a single point of $B$. Since $S$ is compact, we can assume that $j(S)\subset \Sigma\times D^2$, where 
$D^2\subset \R^2$ is a compact disk. 
Then $p^{-1}(\delta)\cap \Sigma\times D^2\simeq \delta \times D^2$  is a manifold with corners 
diffeomorphic (after rounding the corners) with the 4-disk $D^4$.  
In particular, the solid torus link $\ell_{\gamma}$ determines a link in $S^3$ by means of the   
embedding $\ell_{\gamma}\subset \gamma\times D^2\subset \partial D^4$. 

\begin{definition}
A solid torus link $\ell\subset \gamma\times D^2$ is {\em completely split} if 
there exist  disjoint disks $D_i^2\subset D^2$ such that each connected component of $L$ is 
contained within a solid torus $\gamma\times D_i^2$. 
The braid $b\in B_n$ is {\em completely splittable} if the corresponding 
link  $\widehat{b}\subset \gamma\times D^2$ is completely split as a solid torus link 
and also trivial as a link in $S^3$.   
We denote by $\mathcal A_n\subset B_n\setminus\{1\}$ the set of completely splittable nontrivial braids.   
\end{definition}

If $\ell\subset \gamma\times D^2$ is a completely split unlink with components $\ell_i$, then choose points $x_i\in D_i^2$ and let $y$ be the single branch point belonging to ${\rm int}(\delta)$. Let $C(\ell_i)$ be the cone on $\ell_i$ with vertex $(y,x_i)\in \delta\times D^2$ and $C(\ell)$ be the union of $C(\ell_i)$. Since each component $\ell_i$ of $\ell$ is a trivial knot in $\partial(\delta\times D^2)$, the multi-cone $C(\ell)$ is the disjoint union 
of locally flat embedded disk in $\delta\times D^2$. Note that 
$C(\ell)$ is a braided surface over the disk $\delta$ with a single branch point $\{y\}$. 
By \cite{Kamada2}, Lemma 16.11) this is the unique braided surface over $\delta$ with 
branch point $\{y\}$ and boundary $\ell$.

\begin{lemma}\label{nobranching}
A braided surface $S$ of degree $n$ over $\Sigma$ without branch points is determined up to 
equivalence rel boundary by its  braid monodromy homomorphism 
$f:\pi_1(\Sigma)\to B_n$. 
\end{lemma}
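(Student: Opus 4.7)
The plan is to identify braided surfaces of degree $n$ without branching over $\Sigma$ with continuous maps $\Sigma \to UC_n(\R^2)$, where $UC_n(\R^2) = F_n(\R^2)/S_n$ is the unordered configuration space of $n$ points in the plane. Since $UC_n(\R^2)$ is a $K(B_n,1)$, homotopy classes of such maps (rel boundary) are classified by the induced homomorphism on fundamental groups up to conjugacy, and this homomorphism coincides with the characteristic morphism. What remains is a parametric isotopy extension argument upgrading homotopy to ambient isotopy.

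Concretely, to any unramified braided surface $j: S \hookrightarrow \Sigma \times \R^2$ one associates the map $\phi_j: \Sigma \to UC_n(\R^2)$, $\phi_j(x) = p_2(j(S) \cap p^{-1}(x))$, whose values are genuinely $n$-element subsets precisely because $p \circ j$ is unramified. The correspondence is reversible: any continuous $\phi: \Sigma \to UC_n(\R^2)$ defines an embedded surface $\{(x,y) : y \in \phi(x)\} \subset \Sigma \times \R^2$ which is an unramified braided surface. Under this bijection, the induced morphism $(\phi_j)_*: \pi_1(\Sigma) \to \pi_1(UC_n(\R^2)) = B_n$ agrees, up to conjugation, with the characteristic morphism. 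By the standard Eilenberg--MacLane classification of maps into aspherical spaces (in its version rel boundary), the maps $\phi_{j_0}$ and $\phi_{j_1}$ are homotopic rel $\partial \Sigma$ if and only if their induced morphisms on $\pi_1(\Sigma)$ are conjugate in $B_n$.

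Given such a homotopy $\Phi: \Sigma \times [0,1] \to UC_n(\R^2)$, the goal is to realize it as a fiber-preserving ambient isotopy $h_t$ of $\Sigma \times \R^2$ with $h_0 = \mathrm{id}$ and $h_1 \circ j_0 = j_1$. For each $x$ one needs a continuous family of compactly supported diffeomorphisms $\psi_{x,t}$ of $\{x\} \times \R^2$ with $\psi_{x,0} = \mathrm{id}$ and $\psi_{x,t}(\Phi(x,0)) = \Phi(x,t)$. Such a lift exists fiberwise because the evaluation map $\mathrm{Diff}_c(\R^2) \to UC_n(\R^2)$ is a Serre fibration (its fiber being the stabilizer of a configuration); the parametric version over $\Sigma$ follows by choosing local lifts on a good cover of $\Sigma$ and gluing them by a partition of unity using the group structure on $\mathrm{Diff}_c(\R^2)$. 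On $\partial \Sigma$ the homotopy may be taken to be stationary, so the ambient isotopy can be arranged to be the identity near $\partial \Sigma \times \R^2$, yielding equivalence rel boundary.

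The main obstacle is the parametric isotopy extension just described: while the fiberwise isotopy extension in $\R^2$ is classical, producing a family over $\Sigma$ of diffeomorphisms compatible with the given family of configuration paths requires a small amount of bundle-theoretic care. All other steps are essentially formal consequences of the asphericity of $UC_n(\R^2)$ and of the identification between braided surfaces and maps into the configuration space.
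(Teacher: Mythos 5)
Your proposal is correct and takes essentially the same route as the paper, just cast in a different model of the same classifying-space fact: you use the configuration space $UC_n(\R^2)\simeq K(B_n,1)$ and the Eilenberg--MacLane classification of maps into aspherical spaces, while the paper's one-line proof phrases the same thing via the unique $D^2\setminus\{p_1,\dots,p_n\}$-fibration (the complement of the surface) determined by the monodromy $f$. The parametric isotopy-extension step you flag as the main obstacle is exactly the content the paper leaves implicit, and your sketch of it (lifting the homotopy through the fibration $\mathrm{Diff}_c(\R^2)\to UC_n(\R^2)$) is sound, though the "gluing by a partition of unity" phrasing is informal and is more cleanly stated as the homotopy lifting property for the fibration over $UC_n(\R^2)\times UC_n(\R^2)$ recording a pair (initial configuration, image configuration).
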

\begin{proof}
A homomorphism $f$ corresponds to a  unique locally trivial fiber bundle  over $\Sigma$ with 
fiber $D^2\setminus\{p_1,p_2,\ldots,p_n\}$ which is trivialized on the boundary $\partial D^2$ fiber bundle. 
One constructs the braided surface over a wedge of circles first and observe that it extends over the 2-cell 
which produces the surface $\Sigma$, as $f$ is a group homomorphism. 
\end{proof}

\begin{theorem}\label{liftingbraid}
A homomorphism $f: \pi_1(\Sigma\setminus B)\to B_n$ arises as the braid monodromy 
of some braided surface of degree $n$ with branch locus $B$ if and only if 
$f$ sends each peripheral loop $\gamma_i$ into a completely splittable nontrivial braid $f(\gamma_i)\in \mathcal A_n\subset B_n$. Moreover, a braided surface $S$ of degree $n$ over $\Sigma$ is determined up to strong equivalence by its braid monodromy $f: \pi_1(\Sigma\setminus B)\to B_n$.  
\end{theorem}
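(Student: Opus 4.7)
The plan is to reduce the global problem to a local analysis near each branch point, glued via Lemma \ref{nobranching} with the branchless picture on the complement of small disjoint disk neighbourhoods $\delta_i$ of the branch points $b_i$. For the necessity of the condition, I would examine, for a given braided surface $S$, the restriction $W_i = j(S)\cap p^{-1}(\delta_i)\subset \delta_i\times D^2$. This $W_i$ is a properly embedded compact surface with boundary $\ell_{\gamma_i}$, and $p|_{W_i}$ is a branched cover of $\delta_i$ with the single branch point $b_i$. The topology of such a branched cover forces $W_i$ to decompose as a disjoint union of flat embedded disks $W_i^{(1)},\dots,W_i^{(s)}$, one for each cycle of length $k_j$ of the permutation $\sigma_i$ associated to $f(\gamma_i)$, each disk being locally modelled by the standard branch $z\mapsto z^{k_j}$. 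After a radial isotopy centred at $b_i$, the disk $W_i^{(j)}$ may be taken to sit inside a product $\delta_i\times D_j^2$ with the $D_j^2\subset D^2$ pairwise disjoint, which establishes the solid-torus splittability of $\ell_{\gamma_i}$. On the torus $\gamma_i\times \partial D_j^2$ the boundary component $\partial W_i^{(j)}$ traces a $(k_j,1)$-torus knot and is therefore an unknot in $S^3$. Hence $f(\gamma_i)\in \mathcal A_n$.

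For the sufficiency, Lemma \ref{nobranching} applied to the restriction of $f$ to $\pi_1(\Sigma\setminus \bigsqcup_i \delta_i)$ yields an unbranched braided surface $S^*$ whose boundary over each $\gamma_i$ is the closure $\ell_{\gamma_i}$ of $f(\gamma_i)$. Since $f(\gamma_i)\in \mathcal A_n$, the multicone $C(\ell_{\gamma_i})$ discussed before the statement provides a braided surface over $\delta_i$ with a single branch point at $b_i$ and the same boundary link, and gluing $S^*$ to $\bigsqcup_i C(\ell_{\gamma_i})$ along the solid tori $\gamma_i\times D^2$ produces the desired braided surface with braid monodromy $f$. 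For uniqueness up to strong equivalence, two braided surfaces with identical $f$ agree on the complement, up to equivalence rel boundary, by Lemma \ref{nobranching}, and coincide on each $\delta_i$ with the multicone $C(\ell_{\gamma_i})$ by \cite{Kamada2}, Lemma 16.11; a collar construction along the $\gamma_i$ then assembles these local fiber-preserving isotopies into a global one.

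The main obstacle is the necessity direction, in particular verifying that the local filling $W_i$ forces its boundary to be completely splittable. Splittability in the solid torus follows from a radial-isotopy argument using the smoothness of the branched cover, while triviality in $S^3$ rests on the explicit identification of each boundary component with a $(k_j,1)$-torus knot via the standard branch-point model; both facts are intrinsic to the rigidity of a branched cover near a branch point and are precisely what allows the Kamada uniqueness lemma to apply to the local piece.
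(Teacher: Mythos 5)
Your overall architecture — decompose over small disks $\delta_i$ around the branch points, use Lemma \ref{nobranching} on the complement, and handle each $\delta_i$ locally — is exactly the paper's, and your sufficiency and uniqueness arguments (multicone $C(\ell_{\gamma_i})$, Lemma 16.11 of \cite{Kamada2}, gluing along collars) coincide with theirs. The difference is in the necessity direction: the paper simply cites Kamada's Lemma 16.12 to conclude that the local piece $j(S)\cap p^{-1}(\delta_i)$ has completely splittable boundary braid, while you attempt a direct derivation.

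That direct derivation has a genuine gap. You assert that each $W_i^{(j)}$ is ``locally modelled by the standard branch $z\mapsto z^{k_j}$'' and that ``after a radial isotopy centred at $b_i$'' the disks sit in disjoint sub-solid-tori with $(k_j,1)$-torus-knot boundary. But the definition of braided surface in the paper imposes no local normal form at a branch point, so the ``standard model'' in \emph{fiber-preserving} coordinates is precisely what must be proved, not a fact one may invoke. (The classical Whitney normal form $z\mapsto z^k$ is a statement about reparametrizing the abstract source and target surfaces, not about straightening the embedding into $\delta_i\times D^2$ by a map commuting with $p$.) Likewise, the ``radial isotopy'' claim — that the split structure visible near $b_i$ propagates outward to the boundary circle — is exactly the content of Lemma 16.12; the sound version of the argument would instead compare the closed braids over concentric circles (all conjugate because the annulus is unbranched) and show that the one over a small circle is standard, and even this last step needs the local rigidity you are assuming. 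As written, the argument essentially presupposes its conclusion. To repair it you should either cite Lemma 16.12 as the paper does, or supply a genuine proof of the local normal form for branch points of embedded braided surfaces (smooth or locally flat), which is nontrivial.

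One further small confusion: you say the rigidity facts are ``precisely what allows the Kamada uniqueness lemma to apply to the local piece.'' Lemma 16.11 (uniqueness of the multicone filling given a completely splittable boundary) and Lemma 16.12 (that the boundary \emph{is} completely splittable) are logically independent; the paper needs both, and you have only cited the former.
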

\begin{proof}
Consider disjoint disks $\delta_i$ bounded by peripheral loops $\gamma_i$, for all branch points and let $X$ be their complement. Then $j(S)\cap p^{-1}(\delta_i)$ is a 
braided surface over the disk $\delta_i$. By (\cite{Kamada2}, Lemma 16.12) 
$j(S)\cap p^{-1}(\delta_i)$ has a braid monodromy homomorphism $f$ with $f(\gamma_i)$ completely splittable. This proves the necessity of our conditions. 

Conversely, the multi-cone $C(\ell_{\gamma_i})$ over the link $\ell_{\gamma_i}$ 
provides a braided surface over $\delta_i$. 
The homomorphism $f:\pi_1(\Sigma-\cup\delta_i)\to B_n$ provides by Lemma \ref{nobranching} a  
unique embedding $j:S'\to \Sigma\times \R^2$ which has no branch points. 
We then glue together $S'$ and the cones $C_i$ along their boundaries, in order to respect 
the projection map $p$. As the glued surface $S$ is unique, the braided surface 
is determined up to strong equivalence by $f$ (see also \cite{Kamada} and \cite[Thm. 17.13]{Kamada2}). 
\end{proof}

As an immediate consequence we have: 

\begin{corollary}\label{corliftable} 
The degree $n$ branched covering $S\to \Sigma$ with branch locus $B$  
is the characteristic branched covering of a braided surface over $\Sigma$ 
if and only if its monodromy homomorphism $f:\pi_1(\Sigma\setminus B)\to S_n$ can be lifted to a homomorphism $F:\pi_1(\Sigma\setminus B)\to B_n$ such that $F$ sends peripheral 
loops into completely splittable nontrivial braids. 
\end{corollary}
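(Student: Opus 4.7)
The plan is to deduce this directly from the preceding theorem by exploiting the natural projection $\pi\colon B_n\to S_n$ sending each standard generator to the corresponding adjacent transposition. The main point is that if $F\colon \pi_1(\Sigma\setminus B)\to B_n$ is the braid monodromy of a braided surface $j(S)\subset \Sigma\times \R^2$, then the characteristic morphism $f\colon \pi_1(\Sigma\setminus B)\to S_n$ of the underlying branched covering $p\circ j$ is precisely $\pi\circ F$. Indeed, for any loop $\gamma\subset \Sigma\setminus B$ the solid torus link $\ell_\gamma=p^{-1}(\gamma)\cap j(S)$ is the closure of the braid $F(\gamma)$, and the way its $n$ strands permute when one traverses $\gamma$ once is exactly $\pi(F(\gamma))$; this is the same permutation that describes the sheets of the unramified covering $p\circ j|_{\Sigma\setminus B}$ over $\gamma$.

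With this compatibility in hand, the two implications are immediate. For the forward direction, if the given branched covering arises as the characteristic map of a braided surface, then that braided surface has a braid monodromy $F$ satisfying $\pi\circ F = f$ (up to the conjugacy inherent in choosing a basepoint lift), and the preceding theorem forces $F(\gamma_i)\in \mathcal A_n$ for every peripheral loop $\gamma_i$. Conversely, given any lift $F\colon \pi_1(\Sigma\setminus B)\to B_n$ of $f$ whose values on peripheral loops lie in $\mathcal A_n$, the preceding theorem produces a braided surface with braid monodromy $F$; its characteristic map is $\pi\circ F=f$, and the associated branched covering is therefore strongly equivalent to the original one.

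There is essentially no obstacle here: the content has already been absorbed by the preceding theorem, and what remains is only to record the tautological fact that characteristic morphisms of braided surfaces and of their underlying branched coverings fit into the commutative triangle with $\pi\colon B_n\to S_n$. The one minor point to be explicit about is that the equivalence classes on both sides are taken up to conjugation, which is compatible with composition by $\pi$, so that lifting the conjugacy class of $f$ to a conjugacy class of $F$ is the right formulation.
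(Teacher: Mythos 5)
Your proposal is correct and takes the same route the paper intends: the paper labels this corollary ``an immediate consequence'' of the preceding theorem and gives no further argument, and your proof supplies exactly the expected unpacking, namely the compatibility $f = \pi\circ F$ under the projection $\pi\colon B_n\to S_n$ together with the conjugacy-class bookkeeping.
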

\begin{proof}
Theorem \ref{liftingbraid} yields a braided surface lifting some branched covering $S'\to \Sigma$ of degree $n$, 
branch locus $B$ and the prescribed monodromy $f$. As the ramification degrees are determined by the cycle structure of the permutations corresponding to the peripheral loops this branched covering can be identified with $S\to \Sigma$. 
\end{proof}

When $B=\emptyset$ we retrieve the following result of Hansen (\cite{Hansen,Hansen2}): 

\begin{corollary}
The degree $n$ unramified covering $S\to \Sigma$ factors as the composition
\[S\stackrel{j}{\hookrightarrow} \Sigma\times \R^2\stackrel{p}{\to} \Sigma\]
of some embedding $j$ and the second factor projection $p$, if and only if 
its monodromy map $f:\pi_1(\Sigma)\to S_n$ 
lifts to a homomorphism $\pi_1(\Sigma)\to B_n$.
\end{corollary}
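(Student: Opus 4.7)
The plan is to derive this corollary as a direct specialization of the preceding one to the case of empty branch locus. The characteristic map of an unramified degree $n$ covering is simply the monodromy homomorphism $f:\pi_1(\Sigma_g)\to S_n$, and a braided surface over $\Sigma$ with empty branch locus is, by definition, exactly an embedding $j:S\hookrightarrow \Sigma\times\R^2$ whose composition with the projection $p$ is an unramified covering. Under these identifications, the equivalence between the two formulations appearing in the statement is tautological, so the only content is the lifting criterion.

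To apply the preceding corollary, I observe that when $B=\emptyset$ there are no peripheral loops to test, so the condition $F(\gamma_i)\in\mathcal A_n$ is vacuously satisfied by any lift $F:\pi_1(\Sigma_g)\to B_n$ of $f$. Existence of a braided surface with the given regular covering as characteristic map therefore reduces to existence of such a lift through $B_n\to S_n$. Conversely, the unique braided surface that realizes a given lift $F$ is produced by Lemma \ref{nobranching}, which supplies, out of any homomorphism $\pi_1(\Sigma_g)\to B_n$, an embedding $j:S\hookrightarrow \Sigma\times\R^2$ without branch points whose braid monodromy is $F$ and whose characteristic $S_n$-map is $f$. I do not anticipate any real obstacle: everything is a formal consequence of what has been already established, and the only thing to double-check is the compatibility of the $B=\emptyset$ conventions, namely that $\pi_1(\Sigma\setminus B)=\pi_1(\Sigma_g)$ and that the absence of branch points makes the cone construction $C(\ell_{\gamma_i})$ trivial.
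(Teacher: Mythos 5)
Your proposal is correct and matches the paper's intent: the paper presents this corollary as an immediate specialization of the preceding one to $B=\emptyset$ (attributing the resulting statement to Hansen), and you carry out exactly that specialization, correctly noting that the peripheral-loop condition becomes vacuous and invoking Lemma \ref{nobranching} for the reconstruction of the braided surface from the lift.
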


Another consequence is
 
\begin{corollary}
Degree $n$ braided surfaces on $\Sigma$ with branch locus $B$, up to strong equivalence rel boundary are 
in one-to-one correspondence with the set $M_{B_n}(\Sigma,B)$ of homomorphisms 
$\pi_1(\Sigma\setminus B)\to  B_n$ sending peripheral loops into $\mathcal A_n$ modulo 
the conjugacy action by $B_n$. Furthermore, the classes of these 
braided surfaces up to equivalence rel boundary are in one-to-one with the 
set  $\mathcal M_{B_n}(\Sigma, B)$ of orbits of the mapping class group $\Gamma(\Sigma\setminus B)$ 
action on  $M_{B_n}(\Sigma,B)$ by left composition.  In particular, braided surfaces 
provide a topological interpretation for the space of double cosets 
$B_{\infty}\backslash B_{\infty}^k/B_{\infty}$, studied by Pagotto in \cite{Pagotto1,Pagotto2}.   
\end{corollary}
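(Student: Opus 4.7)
My plan is to derive this corollary directly from the preceding theorem together with the standard Hurwitz-type bookkeeping for branched coverings recalled earlier in the section. The preceding theorem already asserts that the braid monodromy $f:\pi_1(\Sigma\setminus B)\to B_n$ classifies braided surfaces up to strong equivalence rel boundary, and characterizes its image via the peripheral condition $f(\gamma_i)\in \mathcal A_n$. To make the first bijection precise, I would carefully spell out the ambiguity in $f$: once a base point in $\Sigma\setminus B$ is fixed, the only remaining choice is a labeling of the $n$ points in the fiber, equivalently a trivialization of the punctured-disk fibration at the base point, and any two such choices differ by a global conjugation in $B_n$. Combined with the preceding theorem this identifies strong equivalence classes with $M_{B_n}(\Sigma,B)$.

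For the second bijection, the additional freedom allowed in equivalence (as opposed to strong equivalence) is a self-homeomorphism $\varphi:\Sigma\to \Sigma$ preserving $B$ setwise and not necessarily isotopic to the identity rel $B$. Such a $\varphi$ represents a class in $\Gamma(\Sigma\setminus B)$ and acts on $\pi_1(\Sigma\setminus B)$ by an outer automorphism preserving the collection of peripheral conjugacy classes. I would check that this descends to a well-defined left-composition action of $\Gamma(\Sigma\setminus B)$ on the set $M_{B_n}(\Sigma,B)$ (inner automorphisms being absorbed by the $B_n$-conjugation quotient), and that two braided surfaces are equivalent rel boundary iff their braid monodromies lie in a common orbit. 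The one step requiring care is realization: given a mapping class $\varphi$ one must produce a fiber-preserving ambient isotopy of $\Sigma\times \R^2$ covering it, but triviality of the plane bundle together with the contractibility of $\R^2$ makes this routine.

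For the final clause on double cosets, I would specialize to the case $\Sigma=D^2$ with $k$ interior branch points, so that $\pi_1(\Sigma\setminus B)$ is freely generated by $k$ peripheral loops. Then $M_{B_n}(D^2,B)$ is identified with $\mathcal A_n^k$, the conjugacy action by $B_n$ is diagonal, and the mapping class group of the $k$-punctured disk acts by the usual Hurwitz moves. Stabilizing $n\to \infty$ along the natural inclusions $B_n\hookrightarrow B_{n+1}$ and matching this setup with Pagotto's realizes the double coset space $B_\infty\backslash B_\infty^k/B_\infty$ as the space of such braided surfaces up to equivalence rel boundary. The main obstacle I anticipate is exactly this last convention-matching with the precise actions used in \cite{Pagotto1,Pagotto2}, but all the genuine geometric content is already contained in the two bijections above.
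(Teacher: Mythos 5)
The paper gives no written proof for this corollary; it is stated as ``Another consequence'' of the theorem on braid monodromy. Your filling-in of the first two bijections is correct and matches what the paper leaves implicit: once a base point and a labeling of the $n$ points in the base fiber are chosen, the only residual ambiguity in the braid monodromy is a global $B_n$-conjugation, and passing from strong equivalence to equivalence introduces exactly the further quotient by the left $\Gamma(\Sigma\setminus B)$-action (with inner automorphisms already absorbed in the $B_n$-quotient). Your remark that realizability of any mapping class by a fiber-preserving ambient isotopy follows from triviality of the plane bundle is the right point to flag.

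The one place you go astray is the final clause on double cosets. You attach the identification with $B_\infty\backslash B_\infty^k/B_\infty$ to the space $\mathcal M_{B_n}$ of Hurwitz equivalence classes, i.e.\ you quotient further by the mapping class group (``Hurwitz moves'') of the punctured disk. That is one quotient too many. The double coset structure is already present at the level of strong equivalence: for a base with $m$ punctures, $M_{G}(\Sigma,B)\cong G^{m}/G_{\mathrm{conj}}\cong G\backslash G^{m+1}/G$ via normalizing the first coordinate by the left diagonal action, the residual right diagonal action being precisely simultaneous conjugation. This is exactly the identification the paper itself records in Section~\ref{invariants} (``Note that $M_G(\Sigma,B)$ can also be identified with the double coset space $G\backslash G^{m+1}/G$''). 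The Hurwitz moves give a subsequent quotient of this double coset space and are not what produces the double cosets. (There is also an off-by-one: with $k$ branch points one obtains $B_n\backslash B_n^{k+1}/B_n$, not $B_n\backslash B_n^{k}/B_n$; you should fix whichever convention matches Pagotto's $k$.) Beyond that indexing and the $M$ versus $\mathcal M$ slip, your argument is sound and is the expected expansion of what the paper leaves unsaid.
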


\begin{remark}\label{indepn}
Symmetric groups and braid groups form nested sequences $\subset S_n\subset S_{n+1}\subset \cdots$ 
$\subset B_n\subset B_{n+1}\subset \cdots$, where inclusions are induced by adding one more strand on the right. 
Inclusions are compatible with the projections $p_n:B_n\to S_n$. We note that the answer to the lifting question for homomorphism $f:\pi_1(\Sigma\setminus B)\to S_n$ is independent on the chosen value for $n$. This follows from the existence 
of a group homomorphism $p_{n+1}^{-1}(S_n)\to B_n$ induced by the map removing the last strand from the right, which 
sends completely splittable braids into completely splittable braids.  
\end{remark}

\begin{remark}
The braided surfaces whose characteristic branched covering is a simple branched covering 
are analogous to achiral Lefschetz fibrations. The monodromy around a branch point is given by a 
band, namely a standard generator of the braid group or its inverse. 
\end{remark}

%\begin{remark}
%A properly embedded surface in the disk $D^4$ that can be put by an isotopy in Morse position with only minima and saddle points (and hence no maxima) is called ribbon.  
%We say that a surface in $\Sigma \times D^2$ is {\em ribbon} if the intersection with any 
%bidisk $\delta\times D^2$, $\delta\subset \Sigma$ an embedded disk, is ribbon. 
%Then (\cite{Rudolph83}, Prop. 1.4 and 3.2) implies that smooth braided surfaces coincide with ribbon surfaces, up to isotopy. 
%\end{remark}

\begin{remark}
Recovering braided surfaces from their characteristic maps is just an instance of more general 
questions about compactifications of fibre bundles. There are examples of smooth maps 
between closed manifolds in specific dimensions having only finitely many critical points 
(see e.g. \cite{F11}). Characterizing the fibre  bundle arising in the complementary of the 
critical locus and how they determine the original maps might have far-reaching implications. 
\end{remark}

\section{Lifting homomorphisms and the proof of Theorem \ref{stable}}
\subsection{The stable lifting problem}
A basic problem in algebra and topology is, for a given  
surjective homomorphism  $p: \tilde{G}\to G$, to characterize those group homomorphisms
$f:J\to G$  which admit a lift to $\tilde{G}$, namely a homomorphism 
$\varphi:J\to \tilde{G}$ such that $p\circ\varphi=f$.
In the simplest case when $J$ is a free group any homomorphism is liftable.
The next interesting case is $J=\pi_1(\Sigma_g)$, where 
$\Sigma_g$ denote the genus $g$ closed orientable surface and $g\geq 2$.  
The lifting question might appear under a slightly more general form, by requiring 
that $(\varphi(\gamma_i))_{i=1,\ldots,m}=(c_i)_{i=1,\ldots,m}\in \widetilde{G}$, for a set of elements $\gamma_i\in J$, $c_i\in \widetilde{G}$. 

Let $\Sigma'$ be a closed orientable surface and $\Sigma$ a surface, possibly punctured. Denote by 
$\Sigma\sharp \Sigma'$ the connected sum. There is a natural map  $\Sigma\sharp \Sigma'\to \Sigma$, called {\em pinch} which consists of crushing the complement of an open disk in $\Sigma'$ to a point. The operation which replaces $\Sigma$ by  $\Sigma\sharp \Sigma'$ will be called a (genus) {\em stabilization}.

Although in general it seems difficult to lift homomorphisms $f$ (see \cite{Melikhov,Petersen}) there is only a homological obstruction to lift $f$, if we allow the surface be stabilized, as it will be explained below.
  
Let $\Sigma_h\setminus B$ be a stabilization of the surface $\Sigma_g\setminus B$, $h\geq g+1$ and 
let $P:\pi_1(\Sigma_{h}\setminus B)\to \pi_1(\Sigma_g\setminus B)$ be the homomorphism induced by the pinch map. 
If $f:\pi_1(\Sigma_g\setminus B)\to G$ is a homomorphism, we 
call the composition $f\circ P$ a   (genus) {\em  stabilization} of $f$. 
We further say that $f:\pi_1(\Sigma_g\setminus B)\to G$ {\em stably lifts} along $p:\tilde{G}\to G$ if it has some stabilization $f'=f\circ P:\pi_1(\Sigma_h\setminus B)\to G$ 
which lifts to $\tilde{G}$.

\subsection{Lifting in the unramified case} We start with an outline of the proof of Theorem \ref{stable} in the
unramified case. A homomorphism $\pi_1(\Sigma_g)\to G$ corresponds to a homotopy class of based 
maps $f:\Sigma_g\to K(G,1)$, thereby defining the Schur class $sc(f)=f_*([\Sigma_g])\in H_2(G)$.

Recall that two surjective homomorphisms $f,f':\pi_1(\Sigma_g)\to G$ are {\em equivalent} if there 
exists an automorphism  $\Theta\in \Aut^+(\pi_1(\Sigma_g))$ such that $f'=f\circ \Theta^{-1}$. 
Here $\Aut^+(\pi_1(\Sigma_g))$ is the group of automorphisms of the fundamental group which are induced by 
homeomorphisms preserving the orientation and fixing a point of the surface $\Sigma_g$. Alternatively, these are 
those automorphisms of  $\pi_1(\Sigma_g)$ which act trivially on $H_2(\pi_1(\Sigma_g))$. 
Now, Zimmermann  (\cite{Zim}, see also \cite{Liv}) proved that  group epimorphisms have stabilizations which are equivalent if and only if their classes in the second homology agree. 

This implies that an epimorphism stably lifts to $\tilde G$ if and only if its Schur  
class in $H_2(G)$ lies in the image of $H_2(\tilde{G})$. Indeed every class in $H_2(\tilde G)$ is the Schur class of some 
homomorphism $\pi_1(\Sigma_{\tilde g})\to \tilde G$, and moreover it is not hard to find a $\tilde g$ and  
such a homomorphism which is surjective (see Lemma \ref{surjective}). 

Dunfield and Thurston (see \cite{DT}) improved this result when the group $G$ is finite. They showed that 
there exists $g(G)$ with the property that any two surjective homomorphisms 
$f, f':\pi_1(\Sigma_g) \to G$ with $g\geq g(G)$ having the  same  Schur class in $H_2(G)$ 
are  already equivalent under the action of $\Gamma(\Sigma_{g,1})\times G$, where $G$ acts by inner automorphisms by right composition. 
The same argument as above shows that for large enough $g\geq g(G)$ a  
surjective homomorphism $f:\pi_1(\Sigma_g) \to G$ lifts to $\tilde G$ if and only if 
$sc(f)$ lies in the image of $H_2(\tilde G)$. Eventually, when $G\subseteq S_n$ and $\tilde G$ is the preimage of $G$ within the braid group $B_n$, one shows that $H_2(\tilde G)\to H_2(G)$ is surjective (see Lemma \ref{H_2surjective}). This proves our claim. 
 
 The rest of this section is devoted to make this strategy work for the ramified case as well.

\subsection{Schur invariants for punctured surfaces}
We now describe a construction of homological Schur invariants for homomorphisms 
$\pi_1(\Sigma_g\setminus B)\to G$. At first let $D^2$ be a disk embedded in $\Sigma_g$ containing the punctures 
$B=\{b_1,b_2,\ldots,b_m\}$ and $\gamma_i$ be a based loop encircling once the puncture $b_i$, so that $\gamma_i$ are 
pairwise disjoint except for their base-point. Identify then $\Sigma_g\setminus B$ with the boundary union of 
$\Sigma_g\setminus D^2$ and $D^2\setminus B$, so that there is a fixed system of curves $\gamma_i$ 
on $\Sigma_g\setminus B$.

Consider the surface with boundary $\Sigma$ obtained from $\Sigma_g$ after 
removing pairwise disjoint open small disks around each puncture 
$b_i$, namely replacing the puncture $b_i$ with a boundary component $\mathbf b_i$. 
Let also $\Sigma^{\circ}$ be the result of cutting $\Sigma$ along the curves 
$\gamma_i$ and discarding the annuli bounded by $\mathbf b_i$ and $\gamma_i$. 

Given the elements $\mathbf c=(c_i)_{i=1,\ldots,m}\in G^m$ we represent them as  homotopy 
classes of  based oriented loops $\ell_i$ embedded within 
the space $K'(G,1)=K(G,1)\times \R^5$, which are disjoint except for their base-point.
Let also $L_i$ be disjoint embedded oriented loops in $K'(G,1)$ such that each pair 
$\ell_i$ and $L_i$ bounds an embedded annulus $A_i$ in $K'(G,1)$.    
Let $L_{\mathbf c}$ be the union of $L_i$.
 
A homomorphism $f:\pi_1(\Sigma_g\setminus B)\to G$ such that 
$f(\gamma_i)=c_i\in G$, for every $i$, provides a continuous 
based map $\phi:\Sigma^{\circ}\to K'(G,1)$, which is unique up to homotopy. 
Then $\phi(\gamma_i)$  is based homotopic to $\ell_i$. By adjoining these homotopies 
we can arrange that  $\phi(\gamma_i)=\ell_i$. By gluing the annuli $A_i$ 
we obtain a  based map $\phi: \Sigma\to K'(G,1)$ which sends 
$\partial \Sigma$ homeomorphically onto $L_{\mathbf c}$. 
Two based homotopies between $\phi(\gamma_i)$  and $\ell_i$ define a map from 
a 2-sphere (with poles identified) into $K'(G,1)$ which must extend to the 3-disk, since 
$\pi_2(K'(G,1))=0$. This implies that 
\[ \phi_*([\Sigma,\partial \Sigma])\in H_2(K'(G,1), L_{\mathbf c})\]
is a well-defined homology class in the relative homology, independent on the 
various choices made in the construction.

Of course a homomorphism $f$ as above could only exist if $\prod_{i=1}^m c_i$ belongs to the 
commutator subgroup $[G, G]$, which we assume to be the case from now on. 
\begin{definition}
Let $\mathbf c=(c_i)_{i=1,\ldots,m}\in G^m$ with  $\prod_{i=1}^m c_i\in [G,G]$, and choose  
a system of curves $\gamma_i$ and a link $L_{\mathbf c}$ as above. 
We denote by $H_2(G; \mathbf c)$ the group $H_2(K'(G,1), L_{\mathbf c})$ and say that 
 $sc(f)=\phi_*([\Sigma,\partial \Sigma])\in H_2(G; \mathbf c)$ is the {\em Schur class} of $f$. 
\end{definition}
From the exact sequence of the pair $(K'(G,1), L_{\mathbf c})$ 
we derive the exact sequence:  

\[0 \to H_2(G)\to H_2(G, {\mathbf c})\to \Z^m\to H_1(G)\]
As the map $\phi$ is a degree one map on the circles $\gamma_i$, the image of 
$sc(f)$ in $H_1(L_{\mathbf c})=\Z^m$ is $(1,1,\ldots,1)$ and hence the rightmost map sends 
$(1,1,\ldots,1)\in \Z^m$ into $0\in H_1(G)$, by exactness. Classes 
in $H_2(G,{\mathbf c})$ whose image is $(1,1,\ldots,1)\in \Z^m$  will be called {\em primitive}.  

A similar invariant, denoted $\varepsilon(f)$, was defined by Catanese, L\"onne and Perroni in \cite{CLP1}. Our invariant $sc(f)$ is non-canonical, in the sense that it depends on the choice 
of the curves $\gamma_i$ and $L_{\mathbf c}$, while $\varepsilon(f)$  is canonical. 
The construction of $\varepsilon(f)$ proceeds as above, working with all possible values of 
$\mathbf c$ at once. The target group  in \cite{CLP1} would naturally be 
$H_2(K(G,1),K(G,1)^{(1)})$, where $K(G,1)^{(1)}$ is the 1-skeleton of $K(G,1)$.  
However, the classes so obtained in  $H_2(K(G,1),K(G,1)^{(1)})$ are only well-defined when 
we pass to a suitable quotient of it identifying classes of surfaces $\Sigma$ whose 
boundaries are only freely homotopic in $K(G,1)$.

This equivalence relation between surface groups homomorphisms readily extends to surjective homomorphisms 
$f:\pi_1(\Sigma_g\setminus B)\to G$ with prescribed peripheral monodromy $f(\gamma_i)=c_i$, for $i=1,\ldots,m$. 
Then two homomorphisms $f$ and $f'$ as above are equivalent if there exists some 
$\Theta\in {\rm SAut}^+(\pi_1(\Sigma_g\setminus B))$ such that $f'=f\circ \Theta^{-1}$. Here 
${\rm SAut}^+(\pi_1(\Sigma_g\setminus B))$ denotes the group of automorphisms of the group $\pi_1(\Sigma_g\setminus B)$ 
which are induced by homeomorphisms preserving the orientation of $\Sigma_g\setminus B$ fixing a point of the surface and 
preserving pointwise the punctures along with the peripheral monodromy, that is 
$f\circ \Theta^{-1}(\gamma_i)=c_i$, for  $i=1,\ldots,m$. Observe that ${\rm SAut}^+(\pi_1(\Sigma_g\setminus B))$ contains  the automorphisms of $\pi_1(\Sigma_g\setminus B)$ whose classes belongs to the subgroup $\Gamma(\Sigma_{g,1}) \subset \Gamma(\Sigma_g\setminus B, *)$ of those mapping classes of homeomorphisms which are the identity on $D^2\setminus B\subset \Sigma_g\setminus B$.  Then the Schur class  $sc(f)\in H_2(G, \mathbf c)$ of a homomorphism $f$ is invariant with respect to the 
left action by ${\rm SAut}^+(\pi_1(\Sigma_g\setminus B))$. 

When $B=\emptyset$ the equivalence relation is compatible with the 
$G$-conjugacy. Consider the set 
\[ M_G(\Sigma_g)={\rm Hom^s}(\pi_1(\Sigma_g), G)/G\]
of $G$-conjugacy classes of {\em surjective} homomorphisms $f$. 
There is an obvious action of the mapping class group $\Gamma(\Sigma_g)$ on 
$M_G(\Sigma_g)$ by left composition. 
We say that $G$-conjugacy classes of homomorphisms are equivalent if they belong to the same
$\Gamma(\Sigma_g)$-orbit. Conjugacy in $G$ acts trivially on $H_2(G)$. 
Livingston  (\cite{Liv}, see also \cite{Zim}) has 
proved that $G$-conjugacy classes of surjective homomorphisms are stably equivalent 
if and only if their classes in $H_2(G)$ agree.

In the punctured case we fix an $m$-tuple $\mathbf c\in G^m$ and its 
conjugacy class with respect to the diagonal action: 
\[G\cdot {\mathbf c}=\{(a c_i a^{-1})_{i=1,\cdots,m}, \ |  a\in G\}\subset G^m.\] 
We then consider the set of surjective homomorphisms mod conjugacy: 
\[ M_G(\Sigma_g, B, {\mathbf c})=\{f\in {\rm Hom^s}(\pi_1(\Sigma_g\setminus B), G) |  (f(\gamma_i))_{i=1,\cdots,m}\in G\cdot \mathbf c\}/G\]
where ${\rm Hom^s}$ denotes the surjective homomorphism. 
The {\em pure} mapping class group $\Gamma(\Sigma_g\setminus B)$ (which fixes the punctures $b_i$ 
pointwise) has a left action on  ${\rm Hom}(\pi_1(\Sigma_g\setminus B), G)/G$ which keeps the subspace $M_G(\Sigma_g,B,\mathbf c)$ invariant. Conjugacy classes are said equivalent if they 
determine the same element in the orbit set:  
\[ \mathcal M_G(\Sigma_g,B,{\mathbf c}) = \Gamma(\Sigma_g\setminus B) \backslash M_G(\Sigma_g,B,\mathbf c).\] 
Observe that the conjugacy by $a\in G$ sends isomorphically $H_2(G;\mathbf c)$ onto 
$H_2(G, a\mathbf c a^{-1})$, in particular $sc(f)$ does not descends 
to $M_G(\Sigma_g, B, {\mathbf c})$.
However we can identify those pairs of elements in the union of groups $H_2(G;\mathbf b)$, 
where $\mathbf b\in G\cdot \mathbf c$ which are related by some conjugacy isomorphism. 
The result is a quotient of $H_2(G;\mathbf c)$ which was explicitly described 
by Catanese, L\"onne and Perroni in \cite{CLP1}. Moreover, the image of $sc(f)$ in 
this quotient group is the same as their $\varepsilon$ invariant which is defined 
on $M_G(\Sigma_g, B, {\mathbf c})$.

\subsection{Stable equivalence for punctured surfaces}
The previous result of Livingston and Zimmermann on $G$-conjugacy classes 
was extended to the punctured case by Catanese, L\"onne and Perroni in \cite{CLP2}. 
Specifically, the $G$-conjugacy classes  from $M_G(\Sigma_g, B, {\mathbf c})$ are stably equivalent if and only if their $\varepsilon$-invariants agree. 
There is a corresponding result for genuine homomorphisms, as follows:  
 
\begin{proposition}\label{Livingston}
Surjective homomorphisms of surface groups with the same puncture set $B$  
and boundary holonomy $\mathbf c\in G^m$ are stably equivalent if and only if 
their Schur classes in $H_2(G,{\mathbf c})$ agree.  
\end{proposition}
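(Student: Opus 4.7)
The plan is to prove the statement by reducing to the Livingston--Zimmermann theorem in the closed case (cited just above). The ``only if'' direction is essentially automatic: stabilization and the action of the pure mapping class group both preserve the relative bordism class of the map $(\Sigma,\partial\Sigma)\to (K'(G,1),L_{\mathbf c})$ associated to $f$, and the Schur class in $H_2(G;\mathbf c)$ depends only on this bordism class.

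For the converse, the idea is to cap off the boundary circles $\mathbf b_i$ and appeal to the closed case. Concretely, I would fix once and for all an orientable \emph{cap} surface $N$ with $m$ boundary components $\partial_1 N,\ldots,\partial_m N$, together with a homomorphism $\psi\colon\pi_1(N)\to G$ such that $\psi(\partial_i N)=c_i^{-1}$ for every $i$. Such a pair $(N,\psi)$ exists as soon as the genus of $N$ is taken large enough, because the existence of $f$ forces $\prod_i c_i\in [G,G]$, and any element of $[G,G]$ is a product of sufficiently many commutators. Gluing $N$ to $\Sigma$ along the common boundary produces a closed surface $\widetilde\Sigma$, and the pairs $f\cup\psi$ and $f'\cup\psi$ define closed-surface homomorphisms $\hat f,\hat f'\colon \pi_1(\widetilde\Sigma)\to G$.

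The crucial point is to verify that the closed Schur classes $sc(\hat f)$ and $sc(\hat f')$ coincide in $H_2(G)$. This should follow from naturality in the long exact sequence
\[
0\to H_2(G)\to H_2(G;\mathbf c)\to \ker\!\bigl(\mathbb Z^m\to H_1(G)\bigr)\to 0
\]
displayed just before the proposition: because $sc(f)$ and $sc(f')$ are both primitive, their difference in $H_2(G;\mathbf c)$ lifts uniquely to $H_2(G)$, and a Mayer--Vietoris decomposition of $[\widetilde\Sigma,\partial\widetilde\Sigma]=[\Sigma,\partial\Sigma]+[N,\partial N]$ identifies this lift with $sc(\hat f)-sc(\hat f')$. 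By hypothesis this class vanishes, so the closed case of Livingston--Zimmermann supplies a sufficiently high genus $\hat h$ and a mapping class $\theta$ realizing an equivalence between the genus-$\hat h$ stabilizations of $\hat f$ and $\hat f'$.

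The main obstacle is the final step: promoting this closed stable equivalence to a \emph{pure} stable equivalence of $f$ and $f'$, that is, one realized by an element of $\Gamma(\Sigma_h\setminus B)$ rather than of the mapping class group of a closed surface. The plan is to perform all stabilizing handles in the interior of $\Sigma\setminus B$, away from the cap $N$, and to arrange that after possibly further stabilization the mapping class $\theta$ can be chosen to preserve $N$ setwise (and to fix $\partial N$ pointwise), hence to descend to a pure mapping class of $\Sigma_h\setminus B$. Verifying this requires revisiting Livingston's construction to check that the stabilizing handles and the equivalence mapping class can be localized in a proper subsurface of $\widetilde\Sigma$ complementary to $N$; this is expected to follow from standard change-of-coordinates arguments in the mapping class group together with the flexibility afforded by additional genus of $\Sigma$, but it is the delicate technical core of the proof.
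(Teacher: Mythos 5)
Your ``only if'' direction matches the paper and is correct. For the converse you take a genuinely different route from the paper: you cap off the boundary with a fixed surface $N$ mapping to $G$ and reduce to the closed Livingston--Zimmermann theorem. The paper instead works \emph{directly} in the relative setting: since $\Omega_2(X,A)\to H_2(X,A)$ is an isomorphism (Thom), equality of relative Schur classes gives a $3$-manifold $M$ bounding $\Sigma\sqcup\overline{\Sigma'}$ relative to a product over $L_{\mathbf c}$; a \emph{relative} Heegaard decomposition of $M$ then produces a common stabilization $\Sigma''$ of both $\Sigma$ and $\Sigma'$ by $1$-handles attached away from $\partial$, with the gluing homeomorphism automatically a pure mapping class. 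This is exactly the point where your approach has a genuine gap, and you have correctly identified it as the delicate core: the closed Livingston theorem hands you a mapping class $\theta$ of the capped-off stabilized surface $\widehat\Sigma_{\widehat h}$, and there is no a priori control making $\theta$ preserve the cap $N$ setwise and fix $\partial N$ pointwise. Appealing to ``standard change-of-coordinates arguments'' is not enough: you would need to show that $\theta$ can be corrected, within the stabilizer of the orbit under further stabilization, to an element whose restriction outside $N$ lies in the pure mapping class group $\Gamma(\Sigma_h\setminus B)$, and the natural way to engineer that control is precisely to re-run Livingston's bordism/Heegaard argument in the relative category --- which is what the paper does from the outset.

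Concretely, what the relative-bordism proof buys you is that the stabilizing handles and the comparison homeomorphism come for free from a Heegaard splitting of a bounded $3$-manifold that already respects $\partial\Sigma$, so purity is automatic; your capping argument pushes the relative constraint into the very last step, where it is hardest to recover. If you want to salvage the capping strategy, the missing lemma you would need to prove is roughly: given two epimorphisms $\widehat f,\widehat f'$ of a closed surface group that agree on the subgroup carried by a fixed subsurface $N$ (with $\partial N$ non-separating complement), any stable equivalence between them can be chosen to restrict to the identity on $N$ after further stabilization away from $N$. That statement is plausible but is not in the literature you cite, and proving it is essentially equivalent in difficulty to the relative version of the theorem itself; so as written your proof is incomplete at exactly the step you flag.

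One small secondary point: you write the exact sequence as $0\to H_2(G)\to H_2(G;\mathbf c)\to\ker(\mathbb Z^m\to H_1(G))\to 0$, which is the correct form of the long exact sequence of the pair; the paper's displayed sequence with $\mathbb Z^m$ on the right is imprecise in general, and your version is what is actually needed for the ``unique lift of the difference of two primitive classes'' argument, so that part of your Mayer--Vietoris computation of $sc(\widehat f)-sc(\widehat f')$ is fine.
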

\begin{proof}
One can derive this from the corresponding stability result in \cite{CLP2}. 
However, there is a direct proof following the lines of the closed case (see \cite{Liv}). 
First, the class $sc(f)$ is preserved by stabilizations. 
Further, if $\Omega_n(X,A)$ is the dimension $n$ orientable bordism group associated to the pair $(X,A)$ of CW complexes, then seminal work of Thom implies that the natural homomorphism 
\[ \Omega_n(X,A)\to H_n(X,A)\]
is an isomorphism if $n\leq 3$ and an epimorphism if $n\leq 6$ (see e.g. \cite{Rudyak}, Thm. IV.7.37). In particular, the classes in $H_2(G, p({\mathbf c}))$ correspond to 
bordism classes of maps $f:(\Sigma,\partial \Sigma)\to (K'(G,1),L_{\mathbf c})$. 

The maps 
$f$ and $f': (\Sigma',\partial \Sigma')\to  (K'(G,1),L_{\mathbf c})$) are bordant if they extend to a 
3-manifold. This means that that there exists a 3-manifold $M$ whose boundary splits as 
$\partial M=\partial_+M \cup \partial_0 M\cup \partial_-M$, where $\partial_+M=\Sigma$ and $\partial_-M=\Sigma'$,  
and a map  $F:(M, \partial_0M)\to (K'(G,1),L_{\mathbf c})$, which restricts to $\partial_{\pm}M$ to $f$ and $f'$. 
We can assume that $\partial_0 M$ is a trivial cobordism and moreover 
$F:\partial_0 M\to L_{\mathbf c}$  is a product projection. 

Take then a Heegaard surface $(\Sigma'',\partial \Sigma'')$  of the triad $(M,\partial_+M,\partial M_-)$, as 
in \cite{CG}. This means that $\partial \Sigma''$ is the union of core circles of $\partial_0M$ and  $\Sigma''$ 
decomposes $M$ into two compression bodies $H$ and $H'$. We can obtain such Heegaard decompositions by 
extending smoothly to $M$ a function which takes constant values on $\partial_{\pm}M$ and perturb it away from the boundary 
to become Morse. Assuming that $\Sigma$ and $\Sigma'$ are connected we obtain a Heegaard surface after attaching index one handles away from $\partial_+M$. 

It follows that the map induced by $F_*$ on the image of $\pi_1(\Sigma'')$ within $\pi_1(M)$ 
is a common stabilization of the homomorphisms $f$ and $f'$, up the the action of the gluing 
homeomorphism of the two compression bodies $H$ and $H'$.  
\end{proof}

We now prove: 
\begin{lemma}\label{surjective}
Let $\tilde{G}$ be a finitely generated group, $\mathbf{\tilde{c}}\in \tilde G^p$ and $a\in H_2(\tilde{G}, \mathbf{\tilde{c}})$. Then there is a 
compact surface $\Sigma$ and a surjective homomorphism $\phi:\pi_1(\Sigma)\to \tilde{G}$
such that $\phi_*([\Sigma,\partial \Sigma])=a$ and $(f(\gamma_i))_{i=1,\cdots,p}=\mathbf{\tilde{c}}\in \tilde{G}^p$.  
\end{lemma}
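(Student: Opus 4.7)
The plan is to first realize $a$ as the Schur class of a map from some compact surface using the bordism description of relative $2$-homology, and then enlarge that surface by attaching handles in its interior to upgrade the induced homomorphism to a surjection without altering either $a$ or the peripheral data $\mathbf c$.

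For the first step, I would invoke the result of Thom already used in the proof of Proposition \ref{Livingston}: the natural map $\Omega_2(K'(\tilde G,1), L_{\mathbf c}) \to H_2(\tilde G, \mathbf c)$ is surjective, so the class $a$ is represented by a map $\psi:(\Sigma_0,\partial\Sigma_0)\to (K'(\tilde G,1),L_{\mathbf c})$ from a compact orientable surface $\Sigma_0$. Using the primitivity implicit in the hypothesis, namely that the image of $a$ in $H_1(L_{\mathbf c})=\Z^p$ equals $(1,\dots,1)$, together with standard boundary surgeries performed inside a bordism (tubing together oppositely oriented circles sent to the same $\ell_i$, splitting off circles of higher winding number, and eliminating null-homotopic boundary components by attaching $2$-handles to the bordism), I would arrange that $\partial\Sigma_0$ consists of exactly $p$ circles, each mapped by an orientation-preserving degree one map onto the corresponding $\ell_i$. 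Choosing a basepoint of $\Sigma_0$ and arcs to each boundary component then yields a homomorphism $\phi_0:\pi_1(\Sigma_0)\to \tilde G$ whose peripheral images are conjugate to the $c_i$; by individually modifying the connecting arcs we may arrange $\phi_0(\gamma_i)=c_i$ exactly, with $\phi_{0,*}([\Sigma_0,\partial\Sigma_0])=a$.

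For the second step, let $g_1,\dots,g_k$ be a finite generating set of $\tilde G$. I would attach $k$ one-handles to $\Sigma_0$ in the interior to produce a compact surface $\Sigma$ with the same boundary as $\Sigma_0$. Each added handle contributes two free generators $\alpha_j,\beta_j$ to $\pi_1(\Sigma)$ and inserts a commutator $[\alpha_j,\beta_j]$ into the defining surface relation. I then extend $\phi_0$ to $\phi:\pi_1(\Sigma)\to \tilde G$ by $\phi(\alpha_j)=g_j$ and $\phi(\beta_j)=1$; the extension is well defined because each added commutator is trivial in $\tilde G$. The homomorphism $\phi$ is surjective since its image contains every $g_j$, and geometrically the classifying map $\Sigma\to K'(\tilde G,1)$ differs from $\psi$ only by the attached tori, each of which factors through a circle (as $\beta_j\mapsto 1$) and so contributes trivially to $H_2$. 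Hence $\phi_*([\Sigma,\partial\Sigma])=a$ and the peripheral images remain equal to $c_i$.

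The main obstacle I expect is the boundary normalization in the first step: producing a bordism representative of $a$ whose boundary consists of exactly $p$ circles each mapping with degree one onto its $\ell_i$ requires combining the primitivity of $a$ with compatible surgeries carried out simultaneously on $\Sigma_0$ and a suitable null-bordism, all while preserving the given relative class. The surjectivity adjustment in the second step is, by contrast, a routine handle construction that does not interact with the Schur class.
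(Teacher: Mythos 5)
Your proof is correct and follows essentially the same strategy as the paper's: first realize $a$ by some (not necessarily surjective) homomorphism from a bounded surface, then enlarge the surface to achieve surjectivity without changing the relative class or the peripheral data. Your second step (attaching $k$ one-handles and sending $\alpha_j\mapsto g_j$, $\beta_j\mapsto 1$) is the same geometric construction the paper packages as crushing a separating curve of $\Sigma_{n+m,p}$ to get a wedge $\Sigma_n\vee\Sigma_{m,p}$, where the map on $\Sigma_n$ factors through the handlebody $H_n$ (so the handlebody meridians die) and $\mathbb F_n\twoheadrightarrow\tilde G$ gives surjectivity; the paper's Mayer--Vietoris computation and your observation that the attached tori contribute trivially to $H_2$ because $\beta_j\mapsto 1$ are the same calculation. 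Where you go beyond the paper is in step one: the paper simply asserts that one can ``pick up a homomorphism $\psi_a$ realizing the class $a$'' with the required peripheral images, while you explicitly run the Thom realization and the boundary-normalization surgeries (pair-of-pants splittings, tubing, capping null-homotopic boundary circles) needed to get exactly $p$ boundary circles of degree one on the $\ell_i$. That step does use the primitivity of $a$, which the lemma's statement leaves implicit but which is forced by the conclusion $(\phi(\gamma_i))=\mathbf c$; you were right to flag it as the delicate point, and your treatment of it is sound.
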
 
\begin{proof}
Let first $a=0$ and $\mathbf{\tilde{c}}$ empty. For large enough $n$ there exists a surjective homomorphism 
$\psi:\mathbb F_n\to \tilde{G}$. Consider then $\phi_0=\psi\circ i_*$, where 
the homomorphism $i_*:\pi_1(\Sigma_{n})\to  \pi_1(H_{n})=\mathbb F_n$ is induced by 
the inclusion $i$ of $\Sigma_n$ into the boundary of the genus $n$ handlebody $H_n$.
Note that $i_*$ is a surjection. Then ${\phi_0}_*([\Sigma_n])=0$, as $\phi_0$ factors through a free group.

Let now $a\in H_2(\tilde{G}, \mathbf{\tilde{c}})$ be arbitrary.  Pick up a homomorphism 
$\psi_a:\pi_1(\Sigma_{m,p})\to \tilde{G}$ realizing the class $a$, so that 
$(\psi_a(\gamma_i))_{i=1,\cdots,p}=\mathbf{\tilde{c}}\in \tilde{G}^p$. 
By crushing the genus $n$ separating loop on $\Sigma_{n+m,p}$ to a point we obtain a surjective 
homomorphism $\pi:\pi_1(\Sigma_{n+m,p})\to \pi_1(\Sigma_n)*\pi_1(\Sigma_{m,p})$ onto the fundamental group of the join $\Sigma_n \vee \Sigma_{m,p}$. 
Consider further the homomorphism 
$\psi_0*\psi_a: \pi_1(\Sigma_n)*\pi_1(\Sigma_{m,p})\to \tilde{G}$. 
%which is 
%defined by:  
%\[ \psi_0*\psi_a(x)=\left\{\begin{array}{ll}\psi_0(x), {\rm if}\; x\in \pi_1(\Sigma_n)*1;\\\psi_a(x), {\rm if}\; x\in 1*\pi_1(\Sigma_{m,p}).\\
%\end{array}
%\right. 
%\]

Then  the composition   $\phi_a: \pi_1(\Sigma_{n+m,p})\to \tilde G$, $\phi_a=(\psi_0*\psi_a) \circ \pi$ is surjective. 
Further, by Mayer-Vietoris we have 
\[ H_2(\Sigma_n \vee \Sigma_{m,p}, \partial \Sigma_{m,p}))=H_2(\Sigma_n)\oplus H_2(\Sigma_{m,p}, \partial \Sigma_{m,p})\]
and $\pi_*([\Sigma_{n+m,p}, \partial \Sigma_{n+m,p}])=([\Sigma_n], [\Sigma_{m,p},\partial \Sigma_{m,p}])$. 
This implies that ${\phi_a}_*([\Sigma_{m+n,p}])=a$.  Thus $\phi_a$ satisfies our requirements. 
\end{proof}

\begin{proposition}\label{stablyliftable}
Consider $\mathbf{\tilde{c}}\in \tilde{G}^p$. The surjective homomorphism 
$f:\pi_1(\Sigma_g\setminus B)\to G$ stably lifts to a (surjective) homomorphism 
$\varphi:\pi_1(\Sigma_h\setminus B)\to \tilde{G}$ 
satisfying the constraints $\varphi(\gamma_i)=\tilde c_i$, for $1\leq i\leq p$, 
if and only if there exists a class in $a\in H_2(\tilde{G},\mathbf{\tilde{c}})$ such that 
\[ p_*(a)= sc(f)=f_*([\Sigma,\partial \Sigma])\in H_2(G, p(\mathbf{\tilde{c}})).\]
\end{proposition}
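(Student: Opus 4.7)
The plan is to use the naturality of the Schur class together with Proposition \ref{Livingston} and Lemma \ref{surjective}.

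For the necessity direction, suppose $\varphi:\pi_1(\Sigma_h\setminus B)\to \tilde{G}$ is a lift of a stabilization $f\circ P$ with $\varphi(\gamma_i)=c_i$. Put $a=sc(\varphi)=\varphi_*([\Sigma_h,\partial \Sigma_h])\in H_2(\tilde{G},\mathbf c)$. Naturality of the Schur class under composition with a group homomorphism gives $p_*(a)=sc(p\circ\varphi)=sc(f\circ P)$. As already recorded in the proof of Proposition \ref{Livingston}, stabilization does not change the Schur class, so $sc(f\circ P)=sc(f)$, and hence $p_*(a)=sc(f)$.

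For the sufficiency direction, start from $a\in H_2(\tilde{G},\mathbf c)$ with $p_*(a)=sc(f)$. Lemma \ref{surjective} produces a compact surface $\Sigma'$ with $p$ boundary components and a surjective homomorphism $\phi:\pi_1(\Sigma')\to \tilde{G}$ realizing $sc(\phi)=a$ and $(\phi(\gamma_i))_{i=1,p}=\mathbf c$. The composition $p\circ\phi:\pi_1(\Sigma')\to G$ is then surjective with Schur class $p_*(a)=sc(f)$, so by Proposition \ref{Livingston} it is stably equivalent to $f$. Unwinding this, there is a common stabilization with pinching maps $P:\pi_1(\Sigma_h\setminus B)\to \pi_1(\Sigma_g\setminus B)$ and $P':\pi_1(\Sigma_h\setminus B)\to \pi_1(\Sigma')$ together with an orientation preserving automorphism $\theta$ of $\pi_1(\Sigma_h\setminus B)$ such that $f\circ P=(p\circ\phi)\circ P'\circ\theta$. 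Setting $\varphi:=\phi\circ P'\circ\theta$ one obtains a lift of $f\circ P$ to $\tilde{G}$, which is exactly a stable lift of $f$.

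The only delicate point is that the peripheral constraints survive the construction. The pinching maps $P$ and $P'$ send peripheral generators to peripheral generators, and the automorphism $\theta$ can be chosen in the pure mapping class group of $\Sigma_h\setminus B$, which fixes each peripheral conjugacy class. Therefore $(\varphi(\gamma_i))_{i=1,p}=\mathbf c$ as tuples of conjugacy classes in $\tilde{G}$, which is the natural sense in which the constraints live once one passes to the quotient $M_{\tilde G}(\Sigma_h,B,\mathbf c)$. The main technical work has thus been absorbed into Lemma \ref{surjective} and the Livingston--Zimmermann type classification recorded in Proposition \ref{Livingston}, and no further obstacle remains.
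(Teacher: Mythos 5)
Your proof is correct and takes essentially the same route as the paper's, which for the sufficiency direction builds $\phi$ via Lemma \ref{surjective}, observes that $p\circ\phi$ and $f$ share the same Schur class, and invokes Proposition \ref{Livingston} to conclude that $f$ has a liftable stabilization. You supply two things the paper's very terse proof glosses over: an explicit necessity direction (by naturality of the Schur class under composition with $p$ and invariance under stabilization), and a verification that the peripheral constraints survive the unwinding of stable equivalence, using that pinches carry peripheral generators to peripheral generators and that $\theta$ can be taken in the pure mapping class group. These are worthwhile additions; the paper tacitly treats the forward direction as obvious and does not comment on the peripheral constraint bookkeeping, so your version is more complete without differing in method.
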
  
\begin{proof} 
If $\phi$ is the map provided by Lemma \ref{surjective} above, 
then $p\circ \phi$ and $f$ are two surjective 
homomorphisms having the same Schur class. By the previous Proposition \ref{Livingston} 
they have equivalent stabilizations. This shows that $f$ is stably equivalent with a liftable 
homomorphism and hence stably liftable.  
\end{proof}

\subsection{Finite target groups}
In case when the group $G$ is {\em finite} there is an improvement of 
the stable equivalence of surface group epimorphisms, following the Dunfield-Thurston 
Theorem (\cite{DT}, Thm.6.20) and we can state:  

\begin{proposition}\label{Dunfield-Thurston}
Let $G$ be a finite group. There exists $g(G,m)$ with the property that
any two surjective homomorphisms 
$f, f':\pi_1(\Sigma_g\setminus B) \to G$ with $g\geq g(G,m)$ and  $f(\gamma_i)=f'(\gamma_i)=c_i\in G$ 
having the  same class in $H_2(G, \mathbf c)$ are equivalent under the action of ${\rm SAut}^+(\pi_1(\Sigma_g\setminus B))$. 
\end{proposition}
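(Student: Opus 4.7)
The plan is to mimic the proof of Dunfield and Thurston's closed-case theorem (\cite{DT}, Thm. 6.20), with Proposition \ref{Livingston} replacing the Livingston-Zimmermann stable equivalence theorem and with the peripheral conjugacy data $\mathbf{c}=(c_i)_{i=1,m}$ carried along as a constraint. Since $G$ is finite and $\pi_1(\Sigma_g\setminus B)$ is finitely generated, both $M_G(\Sigma_g,B,\mathbf{c})$ and its orbit quotient $\mathcal{M}_G(\Sigma_g,B,\mathbf{c})$ under the pure mapping class group are finite. The Schur class descends to a $\Gamma(\Sigma_g\setminus B)\times G$-invariant map
\[ sc_g : \mathcal{M}_G(\Sigma_g,B,\mathbf{c}) \to H_2(G,\mathbf{c})_{\mathrm{prim}},\]
and the pinching stabilization maps $\mathcal{M}_G(\Sigma_g,B,\mathbf{c}) \to \mathcal{M}_G(\Sigma_{g'},B,\mathbf{c})$ commute with the $sc_g$. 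Proposition \ref{Livingston} asserts that $sc_\infty$ is a bijection in the direct limit, so the task reduces to producing a finite genus $g(G,m)$ at which $sc_g$ itself is already injective.

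Given $f,f'$ with $sc(f)=sc(f')$, the first step is to apply Proposition \ref{Livingston} to obtain stabilizations $f\circ P,\; f'\circ P$ on some $\Sigma_h\setminus B$ that are equivalent under $\Gamma(\Sigma_h\setminus B)\times G$; the goal is to bound $h-g$ uniformly. Following (\cite{DT}, Section 6), I would realize the equivalence by an explicit relative bordism: using Thom realization (exactly as in the proof of Proposition \ref{Livingston}), the identical Schur classes produce a $3$-manifold $M$ with $\partial M = \Sigma\sqcup(-\Sigma')$, corners along the boundary circles mapping into $L_{\mathbf{c}}$, and a continuous map $F:M\to K'(G,1)$ extending both $f$ and $f'$. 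A relative Heegaard splitting of $M$, arranged with a product structure near $\partial M$, writes the bordism as a sequence of $1$-handle attachments and identifies a common stabilization of $f$ and $f'$ with the two sides of a single Heegaard handlebody, up to an element of $\Gamma(\Sigma_{g+k}\setminus B)$, where $k$ is the Heegaard genus of $M$ relative to its corners.

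To obtain the uniform bound $g(G,m)$ independent of the particular $f,f'$, I would observe that there are only finitely many peripheral tuples $\mathbf{c}\in G^m$ (since $G$ is finite) and that each finite group $H_2(G,\mathbf{c})$ carries only finitely many primitive classes. For each pair of classes with the same image in $H_2(G,\mathbf{c})$, select a bordism $M$ of minimal relative Heegaard genus; the maximum of these minima over the finite set of $(\mathbf{c}, \text{class})$ data is the desired $g(G,m)$. The main obstacle is the relative Heegaard argument itself: one must ensure that the Morse function producing the Heegaard splitting of $M$ is compatible with the product structure on $\partial M$ encoding $\mathbf{c}$, so that the stabilizations do not alter the peripheral values $f(\gamma_i)=c_i$, and that the Heegaard genus can be controlled solely in terms of $|G|$ and $m$ rather than in terms of $g$ or the particular bordism representative.
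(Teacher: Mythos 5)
Your plan correctly identifies the structure of the problem — one must show the Schur class is eventually a complete invariant at a \emph{finite, uniform} genus — and correctly flags the crux as bounding the number of stabilizations needed. But the mechanism you propose for that bound does not work. The bordism $M$ produced by Thom realization depends on the specific pair $(f,f')$, not merely on their common Schur class, and for a fixed class in $H_2(G,\mathbf{c})$ there are unboundedly many such pairs as $g$ grows. So the step ``for each pair of classes with the same image in $H_2(G,\mathbf{c})$, select a bordism $M$ of minimal relative Heegaard genus; take the maximum over the finite set of classes'' has the quantifiers backwards: you would need a \emph{single} bound valid for every $(f,f')$ realizing that class, and you give no reason such a bound exists. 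You acknowledge this as ``the main obstacle,'' but the proposal stops there and does not resolve it, so the argument is incomplete.

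The paper's proof avoids the Heegaard-genus issue entirely by the cardinality trick from Dunfield--Thurston. The key fact is that for $g>|G|$ every homomorphism $\pi_1(\Sigma_g\setminus B)\to G$ is itself a stabilization of a lower-genus one (after acting by the mapping class group), which makes the stabilization maps
\[
\mathcal M_G(\Sigma_g,B,\mathbf{c})\longrightarrow\mathcal M_G(\Sigma_{g+1},B,\mathbf{c})
\]
\emph{surjective}. Since these are finite sets, the sequence of cardinalities is non-increasing and hence eventually constant, so the stabilization maps are eventually bijections. Proposition \ref{Livingston} says that $sc$ becomes injective in the limit; combined with eventual bijectivity of the stabilization maps, $sc$ is already injective on $\mathcal M_G(\Sigma_g,B,\mathbf{c})$ for $g$ large, which is exactly the claim. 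No control on Heegaard genus is needed at any point. If you want to salvage your route, you would have to supply the destabilization input (surjectivity of the stabilization map for $g>|G|$) anyway — and once you have that, the counting argument is shorter than any Heegaard-genus estimate could be.
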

\begin{proof}
The proof from (\cite{DT} Thm. 6.20 and 6.23) extends without major modifications. In fact if $g > |G|$ 
any homomorphism $f:\pi_1(\Sigma_g\setminus B)\to G$ is a stabilization and this produces 
a surjective homomorphism induced by stabilization 
\[M_G(\Sigma_g,B,{\mathbf c}) \to  M_G(\Sigma_{g+1},B,{\mathbf c})\]
It follows that the cardinal of the orbits set is eventually constant. On the other hand, by 
Proposition \ref{Livingston}, the orbits set eventually injects into $H_2(G,{\mathbf c})$.  
\end{proof}

\begin{proposition}\label{liftable} 
Let $G$ be a finite group $G$, $p:\tilde{G}\to G$ be a surjective homomorphism 
and  $\mathbf c\in G^m$ such that  $\prod_i c_i\in [G,G]$. %Assume that $c_i$ generate $G$. 
\begin{enumerate}
\item There exist lifts 
$\mathbf{\tilde{c}}\in \tilde G^m$ such that $p(\mathbf{\tilde{c}})=\mathbf c$ and 
$\prod_i \tilde{c_i}\in [\tilde{G},\tilde{G}]$.
\item Given a lift  $\mathbf{\tilde{c}}$ as in the previous item,    
there exists some $g(G,m, \tilde{G}, \mathbf{\tilde{c}})$ such that every 
surjective homomorphism 
$f:\pi_1(\Sigma_g\setminus B)\to G$  with $f(\gamma_i)=c_i\in G$,  $g\geq g(G,m, \tilde{G}, \mathbf{\tilde{c}})$, for which there exists some class in $a\in H_2(\tilde{G}; \mathbf{\tilde{c}})$ satisfying  
\[ p_*(a)=sc(f)\in H_2(G, {\mathbf c})\]
lifts to  $\varphi:\pi_1(\Sigma_g\setminus B)\to \tilde{G}$ 
with the constraints $(\varphi(\gamma_i))_{i=1,\cdots,m}=\mathbf{\tilde{c}}\in \tilde{G}^m$. 
\end{enumerate}
\end{proposition}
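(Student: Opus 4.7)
The plan is to produce a reference surjective homomorphism into $\tilde G$ that realises the Schur class $a$ and the peripheral tuple $\mathbf{\tilde c}$, push it down to $G$, and then invoke Proposition \ref{Dunfield-Thurston} to identify it with $f$ up to the combined action of the pure mapping class group and of $G$-conjugation. The required lift of $f$ will be read off directly from this identification.

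First I apply Lemma \ref{surjective} to $\tilde G$, the tuple $\mathbf{\tilde c}$ and the class $a\in H_2(\tilde G,\mathbf{\tilde c})$: this produces an integer $g_0=g_0(\tilde G,\mathbf{\tilde c},a)$ and a surjective homomorphism $\phi_0:\pi_1(\Sigma_{g_0}\setminus B)\to\tilde G$ with Schur class $a$ and peripheral tuple $\mathbf{\tilde c}$ up to simultaneous conjugation in $\tilde G$. For any $g\geq g_0$ I stabilise $\phi_0$ by precomposing with a pinching map $P:\pi_1(\Sigma_g\setminus B)\to\pi_1(\Sigma_{g_0}\setminus B)$ that is the identity near $B$. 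The resulting $\phi_g$ is still surjective (as $P$ is), still sends each $\gamma_i$ to $\phi_0(\gamma_i)$, and has Schur class $a$, since stabilisation preserves the Schur class (as recalled in the proof of Proposition \ref{Livingston}). Post-composing with $p$ produces a surjective homomorphism $f':=p\circ\phi_g:\pi_1(\Sigma_g\setminus B)\to G$ with $sc(f')=p_*(a)=sc(f)$ and peripheral tuple $\mathbf c$.

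Take now $g\geq\max(g_0,g(G,m))$, with $g(G,m)$ as in Proposition \ref{Dunfield-Thurston}. That proposition forces $f$ and $f'$ to determine the same class in $\mathcal M_G(\Sigma_g,B,\mathbf c)$, so there exist an element $\theta\in\Gamma(\Sigma_g\setminus B)$ of the pure mapping class group and an element $g_1\in G$ with
\[ f(x)=g_1\cdot p\bigl(\phi_g(\theta_*(x))\bigr)\cdot g_1^{-1}\]
for every $x\in\pi_1(\Sigma_g\setminus B)$. Picking any lift $\tilde g_1\in p^{-1}(g_1)$ and setting $\varphi(x):=\tilde g_1\cdot\phi_g(\theta_*(x))\cdot\tilde g_1^{-1}$ yields a homomorphism $\varphi:\pi_1(\Sigma_g\setminus B)\to\tilde G$ with $p\circ\varphi=f$. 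Since $\theta$ is pure, each $\theta_*(\gamma_i)$ is freely conjugate to $\gamma_i$; hence $\varphi(\gamma_i)$ is $\tilde G$-conjugate to $\phi_g(\gamma_i)$, and therefore to $\tilde c_i$. Thus $(\varphi(\gamma_i))_{i=1,m}=\mathbf{\tilde c}\in\tilde G^m/\tilde G$, and the statement holds with $g(G,m,\tilde G,\mathbf{\tilde c}):=\max(g_0,g(G,m))$.

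The main obstacle I anticipate is purely the bookkeeping around peripherals: one must keep $\theta$ in the pure mapping class group so that each $\gamma_i$ is only moved within its own conjugacy class, and one must remember that the $G$-conjugation absorbed in passing from $M_G$ to $\mathcal M_G$ lifts to a conjugation in $\tilde G$ via any choice of $\tilde g_1\in p^{-1}(g_1)$. A more genuine hypothesis to verify is that $\tilde G$ is finitely generated, which is needed for Lemma \ref{surjective} to apply; this is automatic in the braid group application $\tilde G=B_n$ relevant for Theorem \ref{stable}.
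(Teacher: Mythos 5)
Your proof follows the same strategy as the paper's: produce a reference surjection into $\tilde G$ realizing a class over $sc(f)$ via Lemma \ref{surjective}, push it down to $G$, identify it with $f$ using Proposition \ref{Dunfield-Thurston}, and read off the lift by transporting the reference surjection back through the equivalence. The explicit bookkeeping at the end (lifting the conjugating element to $\tilde g_1\in p^{-1}(g_1)$, using purity of $\theta$ to control the peripheral images) is correct and fills in the step the paper compresses into the sentence that ``the action of $\Gamma(\Sigma_g\setminus B)\times G$ preserves the set of homomorphisms which admit a lift with the given constraints.'' Invoking Lemma \ref{surjective} rather than bare Thom realization to build $\phi_0$ is also appropriate, since Proposition \ref{Dunfield-Thurston} requires surjectivity.

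There is, however, a gap in your choice of genus bound. You apply Lemma \ref{surjective} to the specific class $a$ appearing in the hypothesis, obtaining $g_0=g_0(\tilde G,\mathbf{\tilde c},a)$, and then set $g(G,m,\tilde G,\mathbf{\tilde c}):=\max(g_0,g(G,m))$. But $a$ varies with $f$, so the number you have defined is not a constant depending only on $(G,m,\tilde G,\mathbf{\tilde c})$: it secretly depends on the homomorphism $f$ you are trying to lift, which is precisely what the statement forbids. The fix — and this is what the paper's proof does — is to observe that you never need the particular $a$ supplied by the hypothesis. Since $G$ is finite, $H_2(G)$ is finite, so the set of primitive classes $s\in H_2(G,\mathbf c)$ (a coset of $H_2(G)$) is finite; for each such $s$ that admits a preimage in $H_2(\tilde G;\mathbf{\tilde c})$, fix once and for all one preimage $a_s$, realize it via Lemma \ref{surjective}, and let $g_0$ be the maximum of the resulting genera over this finite family. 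Running your argument with $a_{sc(f)}$ in place of $a$ then yields a genuinely uniform bound. Your closing remark that $\tilde G$ must be finitely generated for Lemma \ref{surjective} to apply is a correct observation that the proposition's statement leaves implicit.
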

\begin{proof}
Let $K$ denote the kernel of the surjection $\tilde{G}\to G$. 
%The five term exact sequence in homology reads: 
%\[H_2(\tilde G)\to H_2(G)\to H_1(K)_G\to H_1(\tilde{G})\to H_1(G)\to 0\]
%It follows that the map $H_1(K)\to H_1(\tilde{G})$ surjects onto 
%$\ker(H_1(\tilde{G})\to H_1(G))$. 
Choose any lift $\mathbf{\tilde{c}}\in \tilde G^m$. Then  
the product of its components differs from a product of commutators by some element $k\in K$.  
We can correct this by replacing the lift $\tilde{c_1}$ by  $k^{-1}\tilde{c_1}$. This proves the first claim. 

Consider the finite set of all pairs $({\mathbf c},s)$, where $\mathbf c\in G^m$ is an $m$-tuple which admits a lift $\mathbf{\tilde{c}}\in\tilde{G}^m$  and some class 
$a_{\mathbf c}\in  H_2(\tilde{G}; \mathbf{\tilde{c}})$ projecting onto 
the primitive class $s\in H_2(G, {\mathbf c})$. 

By Lemma \ref{surjective}
there exists a punctured surface $\Sigma_{k(a_{\mathbf c})}\setminus B$ and a continuous map defined on the compact surface with boundary $(\Sigma',\partial \Sigma')$ 
which compactifies it, say   $(\Sigma',\partial \Sigma')\to (K'(\tilde{G},1), L_{\mathbf c})$,  
which induces a surjective homomorphism 
$\phi: \pi_1(\Sigma_{k(a_{\mathbf c})}\setminus B)\to \tilde{G}$
such that $\phi_*([\Sigma',\partial \Sigma'])=a_{\mathbf c}$. 
Let then $g_0=g_0(G,m, \tilde{G}, \mathbf{\tilde{c}})$ be the maximum of all $k(a_{\mathbf c})$.

If $g\geq g_0$ we stabilize $\phi$ to be defined on $\Sigma_g\setminus B$. 
Let then $\varphi=p\circ \phi$.   Then $\varphi$ is a surjective homomorphism onto $G$ and 
\[ \varphi_*([\Sigma', \partial \Sigma'])=f_*([\Sigma,\partial \Sigma])\in 
H_2(G, p({\mathbf{\tilde{c}}}))\]
Now, from Proposition \ref{Dunfield-Thurston}.   
there exists some $g(G)$ such that for $g\geq g(G)$ any two surjective 
homomorphisms $\varphi$ and $f$ as above are  
equivalent up to the action of  ${\rm SAut}^+(\pi_1(\Sigma_g\setminus B))$. 
We can take $g(G,m, \tilde{G}, \mathbf{\tilde{c}})=\max(g(G), g_0(G,m, \tilde{G}, \mathbf{\tilde{c}}))$. 
The action of ${\rm SAut}^+(\pi_1(\Sigma_g\setminus B))$ preserves 
the set of homomorphisms  $\pi_1(\Sigma_{g}\setminus B)\to G$ which admit a lift to $\tilde{G}$ 
with the given constraints, thereby proving our claim. 
\end{proof}

A directly related question is whether a surjective 
homomorphism $f:\pi_1(\Sigma_g)\to G$  with vanishing Schur class factors through a free group $\mathbb F$, 
in which case of course it can be lifted along any epimorphism $p:\tilde{G}\to G$, for any group $\tilde{G}$.
If this happens, the homomorphism $f$ will be called {\em free}, or {\em elementary}.  
As can be inferred from the previous results we have: 
\begin{proposition}\label{free}
Let $G$ be a finite group. Then there is some $g(G)$ such that for any $g\geq g(G)$ every 
surjective homomorphism $f:\pi_1(\Sigma_g)\to G$ with $f_*([\Sigma_g])=0\in H_2(G)$ is elementary.  
\end{proposition}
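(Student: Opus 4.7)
The plan is to deduce this directly from Proposition \ref{liftable} by choosing $\tilde{G}$ to be a free group. Concretely, since $G$ is finite, pick an integer $n$ such that $G$ is generated by $n$ elements and let $p:\mathbb F_n\to G$ be a surjection witnessing this generation. Then to show $f$ is elementary it suffices to lift $f$ along $p$, because any such lift $\varphi:\pi_1(\Sigma_g)\to \mathbb F_n$ yields a factorization $f=p\circ\varphi$ through the free group $\mathbb F_n$.

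To apply Proposition \ref{liftable}, I need to produce a class $a\in H_2(\mathbb F_n)$ with $p_*(a)=sc(f)$. Here there are no punctures, so $B=\emptyset$, the tuple $\mathbf c$ is empty, and the relative $H_2$ reduces to ordinary $H_2$. Since free groups have cohomological dimension one we have $H_2(\mathbb F_n)=0$, so the only candidate is $a=0$; and by hypothesis $sc(f)=f_*([\Sigma_g])=0$, so indeed $p_*(0)=0=sc(f)$. Proposition \ref{liftable} then delivers a genus bound $g(G):=g(G,0,\mathbb F_n,\emptyset)$ such that for every $g\ge g(G)$ the morphism $f$ lifts to a homomorphism $\varphi:\pi_1(\Sigma_g)\to\mathbb F_n$ with $p\circ\varphi=f$. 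This is precisely the definition of $f$ being elementary.

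The main (and essentially only) point to check is that the hypotheses of Proposition \ref{liftable} are met once we set $\tilde G=\mathbb F_n$: the surjectivity of $p$, the vanishing of the Schur class together with $H_2(\mathbb F_n)=0$ giving a lift of the class, and the absence of peripheral constraints since $B=\emptyset$. No new argument is needed beyond these verifications, and the resulting constant $g(G)$ depends on $G$ alone (through the auxiliary choice of generating rank $n$, which may be taken to be the minimal number of generators of $G$).
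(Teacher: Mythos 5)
Your proposal is correct and follows exactly the paper's own one-line argument: take $\tilde G$ to be a free group surjecting onto $G$, observe $H_2(\mathbb F_n)=0$ so the hypothesis of Proposition \ref{liftable} is met precisely when $sc(f)=0$, and conclude that $f$ lifts, hence factors through a free group. You have simply unwound the details the paper leaves implicit.
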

\begin{proof}
We can take $\widetilde{G}$ to be a fixed free group surjecting onto $G$ and use Proposition \ref{liftable}. 
\end{proof}

We now need some preliminary results concerning braid groups. 

\begin{lemma}\label{H_2surjective}
Let $G\subseteq S_n$ be a finite group and $\tilde G\subset B_n$ be the preimage 
of $G$ by the projection homomorphism $p:B_n\to S_n$. 
Then the map $p_*:H_2(\tilde{G})\to H_2(G)$ is surjective. 
\end{lemma}
\begin{proof}
The kernel of $p$ is the pure braid group $P_n$ on $n$ strands. The five term exact sequence in homology reads: 
\[H_2(\tilde G)\to H_2(G)\to H_1(P_n)_G\to H_1(\tilde{G})\to H_1(G)\to 0\]
On one hand $H_2(G)$ is a torsion group, as $G$ is finite. 
Furthermore $H_1(P_n)$ is the free abelian group generated by the set $S(n)$ of classes
$A_{ij}$, $1\leq i < j \leq n$ and the action of $S_n$ is 
\[ \sigma\cdot A_{ij}=A_{\min(\sigma(i),\sigma(j)), \max(\sigma(i),\sigma(j))}.\]
By (\cite{Brown}, II.2.ex.1) the module of co-invariants 
$H_1(P_n)_G=\Z S(n)_G$ is isomorphic to the free abelian group 
$\Z[S(n)/G]$. In particular any homomorphisms 
$H_2(G)\to H_1(P_n)_G$ must be trivial. Then the exact sequence above implies the claim. 
\end{proof}

\subsection{Lifting permutations to completely splittable braids}
\begin{lemma}\label{commlifts}
Every $m$-tuple ${\mathbf\sigma}\in S_n^m$ satisfying $\prod_i \sigma_i\in [S_n,S_n]$ 
has a lift 
$\mathbf{\tilde{\sigma}}\in B_n^m$ with the properties:
\begin{enumerate}
\item   $\mathbf{\tilde{\sigma}}\in \mathcal A_n^m\subset B_n^m$; 
\item $\prod_i \tilde{\sigma_i}\in [B_n,B_n]$;
\item Suppose that $\{r_1,r_2,\ldots, r_\nu, t_1,t_2,\ldots, t_\nu\}\subseteq \{1,2,\ldots,m\}$ 
is a subset of the set of indices with the property $\sigma_{r_s}=\sigma_{t_s}^{-1}$, 
for $1\leq s\leq \nu$.  
Then we can choose $\tilde\sigma_i$ such that additionally:  
\[\tilde\sigma_{r_s}=\tilde\sigma_{t_s}^{-1},\; {\rm for}\;  1 \leq s\leq \nu\] 
\end{enumerate} 
\end{lemma}
\begin{proof}
Let $b_i$, $1\leq i\leq n-1$, denote the standard generators of the braid group $B_n$. 
Recall that the exponent sum $e:B_n\to \Z$ is the unique homomorphism taking values 
$e(b_i)=1$ on the standard generators. Set also $\tau_j$ for the transposition $(j,j+1)$ of $S_n$.  

We first show that any permutation $\sigma$ has a lift $\tilde\sigma\in \mathcal A_n$, such that 
$e(\tilde\sigma)=\mu$, where $\mu$ is any prescribed element of 
$\{-1,1\}$, when $\sigma$ is odd and $\mu=0$, otherwise. This is obvious for $n=2$. We proceed by induction when $n>2$, by assuming the claim for $n-1$. Any $\sigma\in S_n$ which does not belong to $S_{n-1}$ can be written as $\sigma=\alpha \tau_{n-1}\beta$, where $\alpha,\beta\in S_{n-1}$.
Pick-up some $\mu\in\{-1,0,1\}$ which is compatible with the parity of $\sigma$, as asked above.  
Choose an arbitrary lift $\tilde\beta\in B_n$. By the induction hypothesis we can find a lift 
$\widetilde{\beta\alpha}\in \mathcal A_{n-1}$  with prescribed  exponent sum $\nu\in\{-1,0,1\}$, depending on the parity of the permutation $\beta\alpha\in S_n$. 
We then define the lift:  
\[ \tilde\sigma= \tilde\beta^{-1} \cdot \widetilde{\beta\alpha} \cdot \tau_{n-1}^{\delta}\cdot \tilde\beta\]
where we set:  
\[ \delta=\left\{\begin{array}{cl}
\mu, & {\rm if} \; \mu\in\{-1,1\}; \\
-\nu, &{\rm if } \; \mu=0.
\end{array}
\right.
\]
Now $\tilde\sigma$ is conjugate to $\widetilde{\beta\alpha} \cdot \tau_{n-1}^{\delta}$ which 
is a stabilization of $\widetilde{\beta\alpha}$ and hence it has the same link closure as the latter. Therefore 
$\tilde\sigma\in \mathcal A_n$, proving the induction step and hence the claim. 
Moreover, we can take $\widetilde{\sigma^{-1}}=  \tilde\beta^{-1} \cdot \tau_{n-1}^{-\delta}\cdot (\widetilde{\beta\alpha})^{-1} \cdot \tilde\beta$, as a lift of $\sigma^{-1}$, which still belongs to $\mathcal A_n$. 

We can actually find explicit lifts $\tilde\sigma$, as follows. 
Recall that the half-braid  $b_{i,j}$ is defined as: 
\[ b_{i,j}=b_i b_{i+1}\cdots b_{j-2}b_{j-1} b_{j-2}^{-1}\cdots b_{i+1}^{-1}b_i^{-1}, {\rm for }\;  i<j \]
\[ b_{i,j}=b_{j,i}^{-1}, {\rm for }\;  i>j \]

To every permutation cycle $c=(i_1,i_2,\ldots,i_k)\in S_n$ and map 
$\varepsilon:\{i_1,i_2,\ldots,i_{k-1}\}\to \{\pm 1\}$, to be called cycle signature, we associate 
a signed mikado braid, as follows: 
\[ \beta(c, \varepsilon)= b_{i_1,i_2}^{\varepsilon(i_1)} b_{i_2,i_3}^{\varepsilon(i_2)}
\cdots b_{i_{k-1},i_k}^{\varepsilon(i_{k-1})}\in B_n\]

Now, every permutation $\sigma\in S_n$ is the product of 
disjoint cycles, say $\sigma=c_1c_2\cdots c_s$.  
Pick-up a cycle signature  $\varepsilon_i$ for each cycle $c_i$. We then set:  
\[ \beta(\sigma, (\varepsilon_i))=\beta(c_1,\varepsilon_1)\beta(c_2,\varepsilon_2)\cdots \beta(c_s,\varepsilon_s)\]
Observe that $\beta(c,\varepsilon)$ and $\beta(c',\varepsilon')$ commute with each other if the cycles $c$ and $c'$ are disjoint. This implies that:  
\[  p(\beta(\sigma, (\varepsilon_i))=\sigma\]

Note that the closure of $\beta(c, \epsilon)$ is a trivial link, for any cycle $c$. 
In fact, we can assume up to a conjugacy, that the cycle $c$ has the form 
$(1,2,\ldots,k)$, so that up to a conjugacy in $B_n$ we have: 
\[ \beta(c, \epsilon)= b_{1}^{\varepsilon(1)} b_{2}^{\varepsilon(2)}
\cdots b_{k-1}^{\varepsilon(k-1)}\in B_n\]
Now we see that this is an iterated stabilization of a trivial braid and hence its closure is a trivial link, regardless of the cycle signature.   
Moreover, the closure of a product of such braids $\beta(c_i, \epsilon_i)$ associated to disjoint cycles $c_i$ is split, each cycle providing a single component of the link.
Thus  $\beta(\sigma, (\varepsilon_i))$ is a completely split unlink.

We can always choose the  cycle signature $\varepsilon$ of a given cycle $c$ such that 
$e(\beta(c, \epsilon))=0$, when $c$ has odd length and $e(\beta(c, \epsilon))=1$,   
otherwise. By changing the cycle signature above to its negative $-\varepsilon$  we can also find 
a cycle signature such that $e(\beta(c, -\epsilon))=-1$, if the length of $c$ is even. 
If $\sigma$ is the product of disjoint cycles $c_i$ we can find  
some cycle signatures $\varepsilon_i$ such that $e(\beta(\sigma, (\varepsilon_i))\in\{-1,0,1\}$, 
by summing up factors with  $e(\beta(c_i, \varepsilon_i)\in\{-1,0,1\}$.

Let now $\sigma_i\in S_n$ be a collection of permutations such that 
$\prod_i \sigma_i\in [S_n,S_n]$. 
We then set 
$\tilde\sigma_i=\beta(\sigma_i, (\varepsilon_{ij}))$ for suitable cycle signature maps $\varepsilon_{ij}$, such that 
$e(\beta(\sigma_i, (\varepsilon_{ij}))\in\{-1,0,1\}$ and also 
$e(\prod_i \beta(\sigma_i, (\varepsilon_{ij}))\in\{-1,0,1\}$.  

If $\sigma_{r_s}=\sigma_{t_s}^{-1}$, then the decompositions into cycles correspond 
bijectively to each other. For each cycle $c_{r_sj}$ of length $k_j$ arising in the 
decomposition of $\sigma_{r_s}$ the cycle $c_{t_sj}=c_{r_sj}^{-1}$ appears in the decomposition of 
$\sigma_{t_s}^{-1}$. We then set: 
\[\varepsilon_{t_sj}(q)=\varepsilon_{r_sj}(k_j-q)\]
This implies that  
\[\beta(c_{t_sj}, (\varepsilon_{t_sj}))= \beta(c_{r_sj}, (\varepsilon_{r_sj}))^{-1}\]
and hence:
\[\tilde\sigma_{r_s}=\beta(\sigma_{t_s}, (\varepsilon_{t_sj}))= \beta(\sigma_{r_s}, (\varepsilon_{r_sj}))^{-1}=\tilde\sigma_{t_s}^{-1}\]

Since  $\prod_i \sigma_i$ is an  even permutation, 
$e(\prod_i\tilde\sigma_i)$ must be even and 
hence it must vanish, since it belongs to $\{-1,0,1\}$.  
This implies that $\prod_i\tilde\sigma_i\in [B_n,B_n]$, proving the claim. 
\end{proof}
 
\begin{remark}
We have a large freedom in the choice of lifts $\tilde\sigma\in \mathcal A_n$. 
Indeed any braid conjugate to $\beta(\sigma,(\varepsilon_i))$ is in $\mathcal A_n$ 
and the corresponding product still belongs to the commutator subgroup of $B_n$. 
\end{remark}

\subsection{End of proof of Theorem \ref{stable}}
We further have the following result, which  along with Corollary \ref{corliftable} proves Theorem \ref{stable}.   
Several arguments of the proof have essentially been discussed in \cite{AMS}.

\begin{theorem}\label{finitelift}
There exists some 
$h_{n,m}$ such that if $g\geq h_{n,m}$, then every homomorphism  
$f:\pi_1(\Sigma_{g}\setminus B)\to S_n$,  
admits a lift $\varphi: \pi_1(\Sigma_{g}\setminus B)\to B_n$ 
satisfying $\varphi(\gamma_i)\in \mathcal A_n$. 
\end{theorem}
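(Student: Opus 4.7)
The plan is to apply Proposition~\ref{liftable} to the canonical projection $p:B_n\to S_n$. Since that proposition requires a surjective homomorphism, I would first view $f$ as surjecting onto its image $H=\mathrm{Im}(f)\leq S_n$ and work with the preimage $\tilde H=p^{-1}(H)\subset B_n$. Two conditions must be verified: (a) the peripheral conjugacy classes $c_i=f(\gamma_i)$ admit lifts $\tilde c_i\in\mathcal A_n\cap\tilde H$ with $\sum_i[\tilde c_i]=0$ in $H_1(\tilde H)$, and (b) the Schur class $sc(f)\in H_2(H,\mathbf{c})$ lies in the image of $p_*:H_2(\tilde H,\tilde{\mathbf{c}})\to H_2(H,\mathbf{c})$.

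For (a), each $k$-cycle of $c_i$ admits lifts of the form $\sigma_{j_1}^{\epsilon_1}\sigma_{j_2}^{\epsilon_2}\cdots\sigma_{j_{k-1}}^{\epsilon_{k-1}}$ built from Artin generators, whose closures are unknotted by iterated Markov destabilization and therefore belong to $\mathcal A_n$. The free sign choice on each letter produces lifts whose exponent sum ranges over $\{-(k-1),-(k-3),\ldots,k-1\}$. Assembling over the disjoint cycles of $c_i$ yields completely splittable lifts of any permutation. Because $\prod_i c_i$ is a product of commutators in $S_n$, the required total parity of $\sum_i\epsilon(\tilde c_i)$ is even, so one can arrange $\sum_i\epsilon(\tilde c_i)=0$ in $H_1(B_n)=\Z$. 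Any residual correction needed in $H_1(\tilde H)$ can be absorbed by modifying individual $\tilde c_i$ within $\mathcal A_n\cap p^{-1}(c_i)$.

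For (b), I would invoke the $5$-lemma on the diagram of short exact sequences
\[
\begin{array}{ccccccccc}
0 & \to & H_2(\tilde H) & \to & H_2(\tilde H,\tilde{\mathbf{c}}) & \to & \ker(\Z^m\to H_1(\tilde H)) & \to & 0 \\
& & \downarrow & & \downarrow & & \downarrow & & \\
0 & \to & H_2(H) & \to & H_2(H,\mathbf{c}) & \to & \ker(\Z^m\to H_1(H)) & \to & 0
\end{array}
\]
Since the right column sends $(1,\ldots,1)$ to $(1,\ldots,1)$, lifting primitive classes reduces to surjectivity of $p_*:H_2(\tilde H)\to H_2(H)$. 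This I would extract from the $5$-term exact sequence
\[
H_2(\tilde H)\to H_2(H)\to (P_n^{\mathrm{ab}})_H\to H_1(\tilde H)\to H_1(H)\to 0
\]
of $1\to P_n\to\tilde H\to H\to 1$, provided the transgression $H_2(H)\to(P_n^{\mathrm{ab}})_H$ vanishes. When $H=S_n$ this is immediate: $(P_n^{\mathrm{ab}})_{S_n}=\Z$ embeds into $H_1(B_n)=\Z$ as the even integers, forcing the transgression to kill the $2$-torsion group $H_2(S_n)=\Z/2$.

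With (a) and (b) in hand, Proposition~\ref{liftable} yields a bound $g(H,m,\tilde H,\tilde{\mathbf{c}})$ for each of the finitely many admissible triples $(H,\mathbf{c},\tilde{\mathbf{c}})$, and I would set $h_{n,m}$ to be their maximum. Conjugation-closure of $\mathcal A_n$ in $B_n$ ensures that the lifted peripheral images still lie in $\mathcal A_n$. The hardest step I anticipate is the uniform verification of the vanishing of the transgression for all subgroups $H\leq S_n$, which is most delicate for subgroups with nontrivial Schur multiplier (such as $A_n$); there one has to control the $H$-action on pairs of strands and the extension class of $\tilde H$.
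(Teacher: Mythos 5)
Your overall strategy matches the paper's: reduce to Proposition~\ref{liftable} applied to $p\colon B_n\to S_n$ restricted to $G=\mathrm{Im}(f)$ and its preimage $\tilde G=p^{-1}(G)$, use the five-lemma on the relative/absolute $H_2$ diagram to reduce to surjectivity of $p_*\colon H_2(\tilde G)\to H_2(G)$, and attack that via the Lyndon--Hochschild--Serre spectral sequence of $1\to P_n\to\tilde G\to G\to 1$, i.e.\ via vanishing of the transgression $d_2\colon H_2(G)\to H_0(G,H_1(P_n))$.

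Where you part ways with the paper is exactly the step you flag as the ``hardest'': you only verify the transgression vanishes for $H=S_n$, using the fact that $(P_n^{\mathrm{ab}})_{S_n}\cong\Z$ injects into $H_1(B_n)\cong\Z$, and you anticipate delicate case analysis for subgroups with nontrivial Schur multiplier such as $A_n$. That worry is unfounded, and the paper sidesteps it with a short uniform argument: for \emph{any} subgroup $G\leq S_n$, $H_1(P_n)$ is the free abelian group on the set $S(n)$ of classes $A_{ij}$, $G$ permutes this basis, and by \cite{Brown} (II.2, ex.~1) the coinvariants $H_1(P_n)_G\cong\Z[S(n)/G]$ are again free abelian; since $G$ is finite, $H_2(G)$ is torsion, and every homomorphism from a torsion group to a free abelian group is zero. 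This needs no control of the $G$-action on pairs of strands beyond the observation that it permutes a basis, and no special treatment of $A_n$. Your injectivity-into-$H_1(B_n)$ argument does not generalize (for $G\subsetneq S_n$ the coinvariants have higher rank and the comparison map need not be injective), so relying on it would leave the general case genuinely open. One small thing you do more carefully than the paper: you check that the completely splittable lifts $\tilde c_i$ can be chosen so that $\sum_i\tilde c_i=0$ in $H_1(\tilde G)$, which is needed for the relevant primitive class in $H_2(\tilde G,\tilde{\mathbf c})$ to exist; the paper's proof simply fixes ``some lift $\tilde\sigma\in\mathcal A_n^m$'' without spelling out this homological constraint.
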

The case $n=3$ and $B=\emptyset$ was solved in \cite{HMM}.
\begin{proof}
The group $S_n$ is generated by two elements $a$ and $b$, for instance a $n$-cycle and a transposition. 
Set $B'$ for the result of adding $4$ more points $p_{m+1},p_{m+2}, p_{m+3}, p_{m+4}$ 
to $B$. There is a natural surjection  $\pi_1(\Sigma_{g}\setminus B')\to \pi_1(\Sigma_{g}\setminus B)$ which corresponds to removing the extra punctures. 
Define the lift $f':\pi_1(\Sigma_{g}\setminus B')\to S_n$ of $f$ by asking that 
the monodromy  $\sigma_{m+i}$ around a loop encircling once counterclockwise $p_{m+i}$, for $1\leq i\leq 4$ be 
$a, a^{-1}, b$ and $b^{-1}$, respectively. 
By construction $f'$ is a surjective homomorphism onto $S_n$.

By Lemma \ref{commlifts} we can choose for 
each $(m+4)$-tuple ${\mathbf\sigma}\in S_n^{m+4}$ some 
lift $\mathbf{\tilde{\sigma}}\in \mathcal A_n^{m+4}\subset B_n^{m+4}$ with 
$\prod_i \tilde{c_i}\in [B_n,B_n]$ and $\tilde\sigma_{m+1}\tilde\sigma_{m+2}=1$, 
$\tilde\sigma_{m+3}\tilde\sigma_{m+4}=1$. 
Let $h_{n,m}$ be the maximum of $g(S_n,m+4, B_n, \mathbf{\tilde{\sigma}})$, over all  
$\mathbf\sigma\in S_n^{m+4}$. 

We  claim that we can lift $f'$ to $B_n$ with the constraints $\mathbf{\tilde{\sigma}}\in B_n^{m+4}$. 
By Proposition \ref{liftable},   it suffices to prove that the homomorphism 
$p_*:H_2(B_n, \mathbf{\tilde{c}})\to  H_2(S_n, \mathbf c)$ 
surjects onto the primitive classes. We have a commutative diagram: 
\[ \begin{array}{ccccccccccc}
0 &\to& H_2(B_n)&\to& H_2(B_n,{\mathbf {\tilde{c}}})&\to& \Z^{m+4}&\to&H_1(B_n)& \to 0\\
  &    & \downarrow & & \downarrow & & \downarrow & &\downarrow & \\    
0 &\to& H_2(S_n)&\to& H_2(S_n,{\mathbf c})&\to& \Z^{m+4}&\to&H_1(S_n)& \to 0\\
\end{array}
\]
where the rightmost vertical arrow is the homomorphism induced by the projection 
$ H_1(B_n)\to H_1(S_n)$, which is surjective. Note that 
the third vertical arrow is $H_1(L_{\mathbf{\tilde{c}}})\to H_1(L_{\mathbf{{c}}})$, 
which is an isomorphism. Then the five-lemma reduces the surjectivity 
claim to the surjectivity of $p_*:H_2(B_n)\to H_2(S_n)$, which was proved in Lemma \ref{H_2surjective}.

Therefore $f'$ lifts to a homomorphism $\varphi':\pi_1(\Sigma_{g}\setminus B')\to B_n$. 
By removing from $\Sigma_g\setminus B$ two disks containing  the pairs $p_{m+1},p_{m+2}$ and 
$p_{m+3}, p_{m+4}$ respectively, we obtain a surface with boundary, whose fundamental 
group injects into  $\pi_1(\Sigma_{g}\setminus B')$. The homomorphism $\varphi'$ takes trivial 
values on the loops around each of the two holes. Therefore 
$\varphi'$  induces a homomorphism of the fundamental group of the surface 
obtained by capping off the boundary components by disks, 
namely a homomorphism $\varphi:\pi_1(\Sigma_g\setminus B)\to B_n$. This is the desired lift for $f$.  
\end{proof}

\section{Thickness of  elementary surface group homomorphisms}
\subsection{Elementary homomorphisms and 3-manifolds}
For the sake of simplicity, we stick in this section to the unramified case $B=\emptyset$. 
Let $f:\pi_1(\Sigma_g)\to G$ be a surjective homomorphism. Assume that 
$f_*([\Sigma_g])=0\in H_2(G)$. 
Then there exists some 3-manifold $M^3$ with boundary $\Sigma_g$ such that 
$f$ extends to $F:\pi_1(M^3)\to G$ (see the proof of Propositions \ref{Livingston} and \ref{prop:genus}). 

\begin{definition}
The {\em thickness} $t(f)$ of the surjective homomorphism $f:\pi_1(\Sigma_g)\to G$ with $sc(f)=0$ is the smallest value of $n$ for which 
there exists a  3-manifold $M^3$ with boundary $\Sigma_g$ and Heegaard genus $g+n$ such that 
$f$ extends to $F:\pi_1(M^3)\to G$. 
\end{definition} 
Other meaningful version might be the rank of the homology or the rank of $\pi_1(M^3)$, 
the hyperbolic volume (when $g=1$) of $M^3$ or any other complexity function on 3-manifolds.

This situation generalizes the case of the commutator width of  
elements in $[G,G]$. On the other hand it  is an analog of Thurston's norm on the 
homology $H_2(M)$ of a 3-manifold.

\begin{proposition}\label{prop:genus}
Let $f:\pi_1(\Sigma_g)\to G$ be a  homomorphism satisfying 
$f_*([\Sigma_g])=0\in H_2(G)$.  
The minimal genus $h$ for which there exists a stabilization  
$f':\pi_1(\Sigma_h)\to G$ which is elementary equals the  
minimal Heegaard genus a 3-manifold $M^3$ with boundary $\Sigma_g$ such that 
$f$ extends to a homomorphism $\pi_1(M^3)\to G$.  
\end{proposition}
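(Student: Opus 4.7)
The plan is to prove the two inequalities $a\leq b$ and $b\leq a$, where $a$ denotes the minimum genus of an elementary stabilization of $f$ and $b$ denotes the minimum Heegaard genus of a $3$-manifold $M^3$ with $\partial M=\Sigma_g$ over which $f$ extends.

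For $a\leq b$, I would start with $M^3$ realizing $\partial M=\Sigma_g$ and Heegaard genus $h$, together with an extension $F:\pi_1(M)\to G$ of $f$. Fix a Heegaard splitting $M=C\cup_{\Sigma_h}H$ with $C$ a compression body, $\partial_-C=\Sigma_g$, $\partial_+C=\Sigma_h$, and $H$ a handlebody of genus $h$. A choice of spine identifies $\pi_1(C)\cong\pi_1(\Sigma_g)*\mathbb F_{h-g}$, and $F|_C$ agrees with $f$ on the first factor while defining some $\psi:\mathbb F_{h-g}\to G$ on the free factor. Using surjectivity of $f$, I would perform Nielsen moves on the free generators $\alpha_i$ (geometrically, sliding the feet of the $1$-handles of $C$ along loops in $\Sigma_g$), replacing each $\alpha_i$ by $w_i\alpha_i$ where $w_i\in\pi_1(\Sigma_g)$ is chosen so that $f(w_i)=\psi(\alpha_i)^{-1}$. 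In the new basis $F|_C$ factors through the retraction $r:\pi_1(C)\twoheadrightarrow\pi_1(\Sigma_g)$, so $F|_{\pi_1(\Sigma_h)}=f\circ P$ for the induced pinching $P=r\circ i_*$. Since the same homomorphism factors through $\pi_1(H)=\mathbb F_h$ from the handlebody side of the decomposition, this exhibits $f\circ P$ as an elementary stabilization at genus $h$.

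For $b\leq a$, suppose $f\circ P:\pi_1(\Sigma_h)\to G$ is elementary with factorization $\psi\circ\pi$ through a free group $\mathbb F$ and a surjection $\pi:\pi_1(\Sigma_h)\twoheadrightarrow\mathbb F$. The key step is a geometric realization: I would produce a handlebody $H$ of genus $h$ with $\partial H=\Sigma_h$ such that $\pi$ factors through $\pi_1(H)=\mathbb F_h$; equivalently, a cut system of $h$ disjoint simple closed curves on $\Sigma_h$ lying in $\ker\pi$. Taking for $C$ the compression body with $\partial_-C=\Sigma_g$, $\partial_+C=\Sigma_h$ whose canonical pinching is $P$, and setting $M=C\cup_{\Sigma_h}H$, one obtains $\partial M=\Sigma_g$ and Heegaard genus at most $h$. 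Finally, define $F:\pi_1(M)\to G$ by $F|_C=f\circ r$ and $F|_H=\psi\circ q$ with $q:\pi_1(H)\twoheadrightarrow\mathbb F$ the factoring of $\pi$; these agree on $\pi_1(\Sigma_h)$ because both equal $f\circ P=\psi\circ\pi$, so they glue to a well-defined extension of $f$.

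The hard part will be the geometric realization of $\pi$ through a handlebody in the $b\leq a$ direction: showing that the kernel of an arbitrary surjection $\pi_1(\Sigma_h)\twoheadrightarrow\mathbb F$ onto a free group contains a meridian system of some handlebody structure on $\Sigma_h$, perhaps after first adjusting $\pi$ by a mapping class group element (which preserves elementarity). The rank restriction coming from the symplectic intersection form on $H_1(\Sigma_h;\mathbb Q)$ together with the abundance of essential simple closed curves inside $\ker\pi$ should make this feasible.
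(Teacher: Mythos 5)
Your overall plan is the same as the paper's: prove the two inequalities by passing between Heegaard splittings/compression bodies and stabilizations. The direction $a\leq b$ is essentially the paper's argument, and your explicit Nielsen-move (handle-slide) step is a reasonable way to make precise the paper's assertion that the restriction of $F$ to the Heegaard surface $\pi_1(\Sigma_h)\to\pi_1(H_{h,g})\to\pi_1(M^3)\to G$ is literally of the form $f\circ P$; that assertion does require either a handle-slide argument like yours or the observation that the induced retraction $\Sigma_h\hookrightarrow H_{h,g}\to\Sigma_g$ is itself a pinching.

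However, the direction $b\leq a$ has a genuine gap, and you identify it yourself. The claim that every homomorphism $\pi_1(\Sigma_h)\to\mathbb F$ onto a free group factors, after composition with an automorphism of $\pi_1(\Sigma_h)$, through the map $\pi_1(\Sigma_h)\to\pi_1(H_h)$ of a standard handlebody --- equivalently, that $\ker\pi$ contains a complete meridian system for some handlebody bounded by $\Sigma_h$ --- is precisely a theorem of Stallings and Jaco (\cite{Jaco}, Lemma~3.2), restated in the paper as Lemma~\ref{LiechtiMarche} following Liechti--March\'e. This is a nontrivial result (it requires more than a rank count; one must actually produce $h$ disjoint, pairwise non-homotopic essential simple closed curves in $\ker\pi$ that form a cut system), and the appeal to ``the symplectic intersection form together with the abundance of essential simple closed curves in $\ker\pi$'' is not a proof of it. Without invoking this lemma (or proving it), the argument does not close, and this is exactly where the paper's proof does the real work by citing it. Note also that the lemma applies to any homomorphism to a free group, not only to surjections, which is the form needed here since a priori the factorization $f'=g\circ\rho$ need not have $\rho$ surjective.
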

\begin{proof}
The arguments come from Livingston's proof (\cite{Liv,DT}) of 
the stable equivalence of homomorphisms. 
Observe that there is a map $F:\Sigma_g\to B G$ inducing 
$f$ at the fundamental group level. Our assumptions and Thom's solution to the Steenrod realization problem implies that there is some 
3-manifold $M^3$ with boundary $\Sigma_g$ such that $F$ extends to a map still denoted by the same letter 
$F:M^3\to B G$. It follows that $f$ factors as the composition 
\[ \pi_1(\Sigma_g)\to \pi_1(M^3)\stackrel{F_*}{\to} G\]
where $\pi_1(\Sigma_g)\to \pi_1(M^3)$ is the homomorphism induced by the inclusion.

 Let $\Sigma_k$ be a Heegaard surface in $M^3$, bounding a handlebody 
$H_k$ of genus $k$ on one side and a compression body $H_{k,g}$ on the other side.
Recall that a compression body $H_{k,g}$ is a compact orientable irreducible 3-manifold 
obtained from $\Sigma_k\times[0,1]$ by adding 2-handles with disjoint attaching curves, so that 
$\pi_1(\Sigma_k)\to \pi_1(H_{k,g})$ is surjective. Alternatively we can see 
 $H_{k,g}$ as the result of adding to $\Sigma_{g}\times [0,1]$ a number of 1-handles, so that 
 $\pi_1(H_{k,g})=\pi_1(\Sigma_g)*\mathbb F_{k-g}$. 
 We then have $\pi_1(M^3)=\pi_1(H_k)*_{\pi_1(\Sigma_k)}\pi_1(H_{k,g})$ where all homomorphisms 
 are induced by the inclusions. 
 
Observe that $f$ is given by the composition: 
\[  \pi_1(\Sigma_g)\to \pi_1(H_{k,g})\to \pi_1(M^3)\stackrel{F_*}{\to} G\]
where the first two arrows are inclusion induced homomorphisms. Consider now 
the homomorphism $f'$ defined by the composition 
\[ \pi_1(\Sigma_k)\to \pi_1(H_{k,g})\to \pi_1(M^3)\stackrel{F_*}{\to} G\]
where the first two arrows are inclusion induced homomorphism. Since $F_*$ extends $f$, it follows that 
$f'$ is a stabilization of $f$ (see also \cite{DT} section 6.15). On the other hand $f'$ factors through the free group $\pi_1(H_k)$.    
It follows that $h$ is bounded by the Heegaard genus,  $h\leq k$.

Conversely, let $f':\pi_1(\Sigma_h)\to G$ be a stabilization of $f$ which factors 
through a free group $\mathbb F$, namely we can write it as 
$f'=q'\circ \rho$, where $q':\mathbb F\to G$ and $\rho:\pi_1(\Sigma_h)\to \mathbb F$.  

Recall the following lemma due to Zieschang, Stallings and Jaco (see \cite{Zie}, \cite[Lemma 3.2]{Jaco}) in the form presented by Liechti and March\'e (\cite{LM}, Lemma 3.5):
\begin{lemma}\label{LiechtiMarche}
Let $\Sigma_h$ be a surface bounding a handlebody $H_h$ and $\mathbb F$ a free group. 
Then any homomorphism $\rho:\pi_1(\Sigma_h)\to \mathbb F$ factors as 
$q\circ i_* \circ \phi_*$, where $\phi_*$ is an automorphism of $\pi_1(\Sigma_h)$ preserving the 
orientation, $i:\pi_1(\Sigma_h)\to \pi_1(H_h)$ is the inclusion and 
$q:\pi_1(H_h)\to \mathbb F$ is a homomorphism. 
\end{lemma}
Write then $\rho=q\circ i_* \circ \phi_*$ as in Lemma  \ref{LiechtiMarche} and define the manifold 
$M^3=H_h\cup_{\phi} H_{h,g}$, where the gluing homeomorphism $\phi$ induces the automorphism $\phi_*$. 
It then follows that $f'$ factors through $\pi_1(M^3)= \pi_1(H_h)*_{\pi_1(\Sigma_h)}\pi_1(H_{h,g})$. Since $\Sigma_h$ is a Heegaard surface in $M^3$ we derive that 
$k\leq h$.   
\end{proof}
  
\begin{corollary}
There is some $h_n$ such that whenever $g\geq h_n$ and 
$f:\pi_1(\Sigma_g)\to G\subseteq S_n$ is a homomorphism with $f_*([\Sigma_g])=0\in H_2(G)$, then 
$f$ is equivalent to a homomorphism which factors through $\pi_1(H_g)$. 
\end{corollary}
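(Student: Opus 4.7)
The plan is to combine Proposition \ref{free} with the Stallings--Jaco--Liechti--March\'e Lemma \ref{LiechtiMarche}. Since $S_n$ has only finitely many subgroups, I would first set
\[ h_n \;=\; \max_{G\leq S_n} g(G), \]
where $g(G)$ denotes the constant provided by Proposition \ref{free}. Given $f:\pi_1(\Sigma_g)\to G\subseteq S_n$ as in the statement, we may replace $G$ with the image $f(\pi_1(\Sigma_g))$, which is still a subgroup of $S_n$, so that $f$ becomes surjective onto $G$.

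For $g\geq h_n$, Proposition \ref{free} then yields that $f$ is elementary: it factors as $f=\alpha\circ \rho$, with $\rho:\pi_1(\Sigma_g)\to \mathbb F$ a morphism to some free group $\mathbb F$ and $\alpha:\mathbb F\to G$. Next, applying Lemma \ref{LiechtiMarche} to $\rho$, where $H_g$ is the handlebody bounded by $\Sigma_g$, produces a decomposition $\rho=q\circ i_*\circ \phi$, in which $\phi\in \Aut^+(\pi_1(\Sigma_g))$, the map $i_*:\pi_1(\Sigma_g)\to \pi_1(H_g)$ is induced by the inclusion, and $q:\pi_1(H_g)\to \mathbb F$.

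Combining these, the homomorphism $f'=f\circ \phi^{-1}=\alpha\circ q\circ i_*$ factors through $\pi_1(H_g)$, and $f'$ is equivalent to $f$ by construction since $\phi^{-1}\in \Aut^+(\pi_1(\Sigma_g))$. The constant $h_n$ depends only on $n$ thanks to the finiteness of the subgroup lattice of $S_n$. There is no substantial obstacle: the statement is essentially a repackaging of Proposition \ref{free} through Lemma \ref{LiechtiMarche}. The only conceptual step is to recognize that although Proposition \ref{free} only promises a factorization through \emph{some} free group, Lemma \ref{LiechtiMarche} allows one to replace it by the rank $g$ free group $\pi_1(H_g)$ at the cost of an orientation-preserving automorphism of $\pi_1(\Sigma_g)$, which is absorbed by the equivalence relation.
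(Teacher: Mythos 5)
Your proof is correct and follows the route the paper intends: Proposition \ref{free} yields that $f$ is elementary, and Lemma \ref{LiechtiMarche} upgrades the abstract factorization through a free group to one through $\pi_1(H_g)$ at the cost of an orientation-preserving automorphism of $\pi_1(\Sigma_g)$, which is exactly what the equivalence relation absorbs; taking the maximum of $g(G)$ over the finitely many subgroups $G\leq S_n$ is the right way to make the bound $h_n$ depend only on $n$. One small caveat: your opening step of replacing $G$ by ${\rm im}(f)$ is harmless only because the corollary --- consistently with the paper's definition of equivalence, which is given for surjective homomorphisms --- should be read with $f$ surjecting onto $G$; if $f$ were genuinely non-surjective, the hypothesis $f_*([\Sigma_g])=0\in H_2(G)$ would not automatically give vanishing in $H_2({\rm im}(f))$, since the map $H_2({\rm im}(f))\to H_2(G)$ need not be injective.
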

 
\subsection{Expressing thickness algebraically}
The next result aims at formulating an algebraic formula for $t(f)$, similar to Hopf's formula 
for the second homology. Consider a standard presentation of the group $\pi_1(\Sigma_g)$ 
using the generators system $\{a_{i}, b_i\}_{i\in \{1,2,\ldots,g\}}$ of the form: 
\[ \pi_1(\Sigma_g)= \langle \{a_{i}, b_i\}_{i\in \{1,2,\ldots,g\}} | \; 
\prod_{i=1}^g [a_i,b_i]=1\rangle \]
Then one identifies  a homomorphism $f:\pi_1(\Sigma_g)\to G$ with a labeled set 
$S=\{\alpha_{i}=f(a_i), \beta_i=f(b_i)\}_{i\in \{1,2,\ldots,g\}}$ of elements of $G$ satisfying the condition:  
\begin{equation} 
\prod_{i=1}^g [\alpha_i,\beta_i]=1\in G
\end{equation}

Let $G=\mathbb F/R$ be a presentation of the group $G$, where $\mathbb F$ is a free group and $R$ the normal subgroup generated by the relators.  
For every labeled set 
$\tilde S=\{(\tilde\alpha_{i}, \tilde\beta_i)\}_{i\in  \{1,2,\ldots,g\}}$ of  lifts 
of $S$ to $\mathbb F$ we set: 
\begin{equation} 
 ocl(\tilde S) = \min\{n | {\rm there \; exist} \; f_j\in \mathbb F, r_j\in R, j=1,2,\ldots,n \; {\rm with}  \prod_{i=1}^g [\tilde\alpha_i,\tilde\beta_i]=\prod_{j=1}^n [r_j,f_j]\} 
\end{equation}
Note that such $n$ exists. Indeed the  Hopf formula provides us with an isomorphism 
\[ H_2(G)=\frac{[\mathbb F,\mathbb F]\cap R}{[\mathbb F, R]}\]
Under this identification the $f_*([\Sigma_g])$  is represented by the class 
of the element $\prod_{i=1}^g [\tilde\alpha_i,\tilde\beta_i]\in [\mathbb F,\mathbb F]\cap R$. 
As $f_*([\Sigma_g])$ vanishes by our assumptions $\prod_{i=1}^g [\tilde\alpha_i,\tilde\beta_i]\in[\mathbb F, R]$.

Eventually we define: 
\begin{equation} 
 ocl(f)= \min \{ocl(\tilde S) |  \tilde S \; {\rm lifts} \; S \}
\end{equation}

We then have the following:  
\begin{proposition}
The minimal number of stabilizations needed for making $f$  elementary is $ocl(f)-g$.  
\end{proposition}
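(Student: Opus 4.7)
The plan is to invoke Proposition \ref{prop:genus}, which identifies the minimal number of stabilizations with $h_{\min}-g$, where $h_{\min}$ is the smallest Heegaard genus of a compact $3$-manifold $M^3$ with $\partial M^3=\Sigma_g$ across which $f$ extends. One then matches this quantity with the algebraic invariant by proving a pair of inequalities, constructively passing between lifts $\tilde S$ and $3$-manifold extensions.

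For one direction, begin with a lift $\tilde S$ realizing $ocl(\tilde S)=n$, so $w:=\prod_{i=1}^g[\tilde\alpha_i,\tilde\beta_i]=\prod_{j=1}^n[r_j,f_j]$ with $r_j\in R$, $f_j\in\mathbb F$. Define $\rho:\pi_1(\Sigma_{g+n})\to\mathbb F$ on the standard symplectic basis $\{\alpha_i,\beta_i\}_{i=1}^{g+n}$ by $\rho(\alpha_i)=\tilde\alpha_i$, $\rho(\beta_i)=\tilde\beta_i$ for $i\le g$ and $\rho(\alpha_{g+j})=f_{n+1-j}$, $\rho(\beta_{g+j})=r_{n+1-j}$ for $1\le j\le n$. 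A direct calculation using the commutator decomposition shows $\prod_{i=1}^{g+n}[\rho(\alpha_i),\rho(\beta_i)]=w\cdot w^{-1}=1$, so $\rho$ is well-defined. The induced map $f'':=q_G\circ\rho$ is elementary at genus $g+n$ but is not literally a pinching of $f$, because $f''(\alpha_{g+j})$ may be nontrivial. To pass to a legitimate elementary stabilization, apply the Stallings-Jaco Lemma \ref{LiechtiMarche} to $\rho$, writing $\rho=q\circ i_*\circ\phi_*$ with $\phi\in\Aut^+(\pi_1(\Sigma_{g+n}))$, $i_*:\pi_1(\Sigma_{g+n})\to\pi_1(H_{g+n})$, and $q:\pi_1(H_{g+n})\to\mathbb F$. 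Assemble $M^3=H_{g+n}\cup_\phi H_{g+n,g}$, where $H_{g+n,g}$ is the compression body whose $1$-handles kill $\beta_{g+1},\dots,\beta_{g+n}$. Extend $f$ across $H_{g+n,g}$ by $\tilde f(\alpha_{g+j}):=q_G(f_{n+1-j})$ on the free factor, and across $H_{g+n}$ by $F_1:=q_G\circ q$. The Stallings-Jaco identity forces these to agree on the amalgamated $\pi_1(\Sigma_{g+n})$, yielding $F:\pi_1(M^3)\to G$ extending $f$. Proposition \ref{prop:genus} then gives the first inequality.

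For the reverse direction, take an elementary stabilization $f\circ P:\pi_1(\Sigma_{g+n})\to G$ factored as $q_G\circ\rho$; lifting generators of the intermediate free group to $\mathbb F$ lets us assume $\rho$ maps into $\mathbb F$. Choose the symplectic basis so that $P$ kills $\alpha_i,\beta_i$ for $i>g$; then $\rho(\alpha_i),\rho(\beta_i)\in R$ for $i>g$, while $\tilde\alpha_i:=\rho(\alpha_i),\tilde\beta_i:=\rho(\beta_i)$ for $i\le g$ constitute a lift $\tilde S$ of $S$. The surface relation $\prod_{i=1}^{g+n}[\rho(\alpha_i),\rho(\beta_i)]=1$ in $\mathbb F$ rearranges to
\[ w=\prod_{i=g+n}^{g+1}[\rho(\beta_i),\rho(\alpha_i)], \]
exhibiting $w$ as a product of $n$ commutators of pairs of elements of $R$; each is of the form $[r_j,f_j]$ with both entries in $R\subseteq\mathbb F$, so $ocl(f)\le ocl(\tilde S)\le n$.

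Combining the two bounds yields the claimed formula. The main technical hurdle is the assembly of $M^3$ in the first direction: the homomorphism $f''$ extracted from the commutator decomposition is elementary but topologically misaligned with any standard pinching of $f$, and the Stallings-Jaco lemma is precisely the device that absorbs the discrepancy into the gluing $\phi$ of the Heegaard splitting, ensuring that $f$ extends across the resulting $3$-manifold.
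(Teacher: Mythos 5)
Your argument is correct and follows essentially the same route as the paper's proof: reduce via Proposition \ref{prop:genus} to the minimal Heegaard genus of a bounding $3$-manifold, then pass between commutator identities in $\mathbb F$ and Heegaard splittings using the Stallings--Jaco Lemma \ref{LiechtiMarche}, in both directions. Your commutator bookkeeping and the assembly of $M^3$ from the lift $\tilde S$ match the paper's construction, and your reverse direction reproduces the rearrangement $w=\prod_{i=g+n}^{g+1}[\rho(\beta_i),\rho(\alpha_i)]$ with $\rho(\beta_i)\in R$ that gives the paper's equation (\ref{fundamental}).

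One discrepancy deserves flagging. Your two inequalities show that the minimal number of stabilizations equals $ocl(f)$, not $ocl(f)-g$. This agrees with the inequalities $ocl(f)+g\leq h$ and $ocl(f)+g\geq h$ actually derived in the body of the paper's proof (which give $h_{\min}-g=ocl(f)$), but not with the proposition as stated, nor with the sentence ``the smallest Heegaard genus coincides with $ocl(f)$'' opening the paper's proof. The $-g$ appears to be an off-by-$g$ slip in the paper, which your explicit computation brings to light; your conclusion is the internally consistent one.

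Two smaller remarks on exposition. The compression body $H_{g+n,g}$ is built by attaching $2$-handles along $\beta_{g+1},\dots,\beta_{g+n}$ (equivalently, $1$-handles to $\Sigma_g\times[0,1]$), so the phrase ``$1$-handles kill $\beta_i$'' is a slip of wording. Also, in the reverse direction you observe $\rho(\alpha_i)\in R$ for $i>g$ as well as $\rho(\beta_i)\in R$; this is correct for a literal pinching $P$, but the definition of $ocl$ only asks for one entry of each commutator to lie in $R$, so the extra condition, while true, is not needed.
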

\begin{proof}
By Proposition \ref{prop:genus} the minimal $h=g+n$ which appears above is the 
minimal Heegaard genus of a manifold $M^3$ with boundary $\Sigma_g$ such that  $f$ extends   
to some homomorphism $F_*:\pi_1(M^3)\to G$. It remains to prove that the smallest Heegaard genus coincides with  $ocl(f)$. 
The proof goes similarly with that given by Liechti-March\'e \cite{LM} for the case of a bordant torus.

Let $\Sigma_h$ a Heegaard surface in $M^3$.  Take a standard system of generators  
of $\pi_1(\Sigma_h)$ of the form $\{a_j,b_j\}_{j=1,\ldots,h}$ such that all $b_j$ bound disks in the handlebody $H_h$. 
By adjoining 2-handles to $\Sigma_h\times[0,1]$ along $b_j\times \{0\}$, for all $j\not\in \{1,2,\ldots,g\}$,  
we obtain a compression body $H_{h,g}$ and we can write $M^3=H_{h,g}\cup_{\phi} \overline{H_h}$, for some gluing homeomorphism $\phi$. 
We have then surjective homomorphisms 
$\pi_1(\Sigma_h)\to \pi_1(H_h)$ and $\pi_1(\Sigma_h)\to \pi_1(H_{h,g})$, while 
$\pi_1(M^3)= \pi_1(H_{h,g}) *_{\pi_1(\Sigma_h)} \pi_1(H_h)$. 

Denote by $\theta_*: \pi_1(\Sigma_h)\to \pi_1(M^3)$ the inclusion induced homomorphism. We observed in the proof of Proposition \ref{prop:genus} above 
that $F_*\circ \theta_*:\pi_1(\Sigma_h)\to G$ is a stabilization of  $f$. 
Its key property is that
\[ F_*\circ \theta_*(b_i)=1,\; {\rm  if }\;  i\not\in \{1,2,\ldots,g\}\]

The homomorphism $\theta$ factors through the free group $\pi_1(H_h)$. Therefore  
$F_*\circ \theta_*:\pi_1(\Sigma_h)\to G$  lifts to a homomorphism 
$\tilde{f}:\pi_1(\Sigma_h)\to \mathbb F$. 
Consider the images $\tilde\alpha_j=\tilde f \circ i_*(a_j),\tilde b_j=\tilde f \circ i_*(b_j)$ of  the generators above into $\mathbb F$. 
As $\tilde f\circ i_*$ is a homomorphism, we have the relation:  
\begin{equation}\label{fundamental} \prod_{i=1}^g  [\tilde\alpha_i,\tilde\beta_i] = 
\prod_{i=g+1}^{h}  [\tilde\beta_i, \tilde\alpha_i]\end{equation}
As $\tilde\beta_i\in R$, we derive that 
\begin{equation}
 ocl(f)+ g\leq h
\end{equation}

Conversely, if we have elements  $\tilde\alpha_j,  \tilde\beta_j$ satisfying 
equation (\ref{fundamental}), then we can define a  
homomorphism $\tilde f:\pi_1(\Sigma_h)\to \mathbb F$, by 
$\tilde f(\alpha_j)= \tilde\alpha_j, \tilde f(\beta_j)=\tilde \beta_j$, $1\leq j\leq h$. 
By Lemma \ref{LiechtiMarche} such a homomorphism factors through $\pi_1(H_h)$, namely  is a composition 
 \[ \tilde f=q\circ i_*\circ \phi_*^{-1}\]
where $i_*$ is as above,  $\phi_*$ is an automorphism of $\pi_1(\Sigma_h)$ and $q:\pi_1(H_h)\to \mathbb F$ is some homomorphism. 
 
Let $f': \pi_1(\Sigma_h)\to G$ be the composition of the projection $\mathbb F\to G$ with $\tilde f$. 
Then  $f'\circ \phi_*(b_i)=1$, for $i=1,2,\ldots,h$.  
On the other hand, as $\tilde\beta_j\in R$, for $j>g$, $f'$ factors also through $\pi_1(H_{h,g})$. 
This implies that $f'$ extends to a homomorphism $F_*:\pi_1(M^3)\to G$, where 
$M^3=H_{h,g}\cup_{\phi} \overline{H_h}$. Eventually, note that the restriction of 
$F_*$ to the image of $\pi_1(\Sigma_g)$ within $\pi_1(M^3)$ is $f$. We constructed an extension of $f$ to a 3-manifold of Heegaard genus at most $h$ and thus we proved the reverse inequality 
\begin{equation}
 ocl(f)+ g \geq h
\end{equation} 
\end{proof}

\begin{remark} 
Consider two surjective homomorphisms $f_j:\pi_1(\Sigma_{g_j})\to G$.  
We know that $f_j$ are stably equivalent if and only 
\[ {f_1}_*[\Sigma_1]={f_2}_*[\Sigma_2]\in H_2(G)\]
If $S_j=\{\alpha_i,\beta_i\}_{i\in I_j}$ are images of generators of $\pi_1(\Sigma_j)$ by $f_j$ and 
$\tilde S_j=\{\tilde\alpha_i,\tilde\beta_i\}_{i\in I_j}$ are lifts to $\mathbb F$, we can define 
\[ ocl^s(\tilde S_1,\tilde S_2) = \min\{n |  \prod_{i\in I_1} 
[\tilde\alpha_i,\tilde\beta_i]\prod_{j=1}^{n-g_1} [r_j,f_j]=
\prod_{i\in I_2} [\tilde\alpha_i,\tilde\beta_i]\prod_{j=1}^{n-g_2} [r'_j,f'_j], \; 
  f_j,f'_j\in \mathbb F, r_j,r'_j\in R\} \]
Eventually we set: 
\[ ocl^s(f_1,f_2)= \min \{ocl(\tilde S_1,\tilde S_2) |  \tilde S_j \; {\rm lifts} \; S_j \}\]
If $f_1$ and $f_2$ are stably equivalent, then $ocl^s(f_1,f_2)$ equals the minimal genus
of a Heegaard splitting separating the boundaries of a 3-manifold to which $f_j$ extend and also 
the minimal number of stabilizations yielding equivalent representations in $G$. The proof is identical. 
\end{remark}

\begin{remark}
The branched surface case of homomorphisms $f:\pi_1(\Sigma_g\setminus B)\to G$ 
with prescribed images of peripheral loops follows directly from the closed 
surface treated above, without essential modifications. 
\end{remark}

\section{Nontrivial thickness and proof of Theorem \ref{simplequotients}}
\subsection{Finite simple non-abelian characteristic quotients}
In \cite{FL} the authors proved that the obvious extension of Wiegold's conjecture to  
surface groups does not hold.

Let $\Sigma_g^1$ denote the once punctured closed orientable surface of genus $g$.  
TQFTs provide the so-called quantum representations of punctured mapping class groups  
\[\rho_p:\Gamma(\Sigma_g^1)\to P\mathbb G_p,\] 
for prime $p\equiv 3 \; ({\rm mod} \; 4)$,  into the integral points of a projective pseudo-unitary group $P\mathbb G_p$ defined over $\mathbb Q$.  It is proved in \cite{FL} that  for large enough $p$ (or $p<100$)
the restriction to $\rho_p(\pi_1(\Sigma_g))$ is a Zariski dense subgroup of $P\mathbb G_p$. 
By the Nori-Weisfeiler strong approximation Theorem we obtain (see \cite[Theorem 1.4]{FL}):
\begin{theorem}\label{FL18}
For large prime $p\equiv 3 \; ({\rm mod} \; 4)$ and large enough primes $q$ the reduction mod $q$ of 
the quantum representation $\rho_p$ exists and has the following properties: 
\begin{enumerate}
\item  its restriction to $\pi_1(\Sigma_g)$ is a surjective homomorphism $f_{p,q}:\pi_1(\Sigma_g)\to P\mathbb G_p(\mathbf F_q)$ onto group of  points of  $P\mathbb G_p$ over the finite field $\mathbf F_q$;
\item the finite groups $P\mathbb G_p(\mathbf F_q)$ are finite simple 
groups of Lie type; 
\item $\ker f_{p,q}$ is a characteristic subgroup of $\pi_1(\Sigma_g)$. 
\end{enumerate}
\end{theorem}
\begin{remark}
For all but finitely many $q$ the finite groups $P\mathbb G_p(\mathbf F_q)$ are isomorphic to either $PSL(N_{g,p}, \mathbf F_q)$ or to projective unitary groups $PU(\mathbf F_{q^2})$. 
If $q-1$ is coprime with $N_{g,p}$, then   $PSL(N_{g,p}, \mathbf F_q)$ has vanishing Schur multiplier 
$H_2(PSL(N_{g,p}, \mathbf F_q))=0$. 
\end{remark} 

We now first show that such quotient homomorphism should be non-elementary:

\begin{proposition}\label{nonelementary}
If  $f:\pi_1(\Sigma_g)\to G$ is a surjective homomorphism onto a characteristic finite  
non-trivial quotient $G$, then  $f$ is not elementary, namely its thickness is positive. 
\end{proposition}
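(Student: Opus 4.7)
The plan is to argue by contradiction: suppose $f$ is elementary, so that $f = h \circ \rho$ for some surjection $\rho\colon \pi_1(\Sigma_g)\to \mathbb{F}$ onto a free group and a homomorphism $h\colon \mathbb{F}\to G$. First I will exhibit a non-separating simple closed curve whose homotopy class lies in $\ker f$. Applying Lemma \ref{LiechtiMarche} to $\rho$, I may write $\rho = q\circ i_*\circ \phi$, where $\phi\in \Aut^+(\pi_1(\Sigma_g))$, $i_*\colon \pi_1(\Sigma_g)\to \pi_1(H_g)$ is induced by the standard inclusion into the genus $g$ handlebody, and $q\colon \pi_1(H_g)\to \mathbb{F}$ is a morphism. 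The subgroup $\ker i_*$ is normally generated by $g$ disjoint meridian curves $\mu_1,\ldots,\mu_g$, each of which is a non-separating simple closed curve on $\Sigma_g$. Setting $\eta_i = \phi^{-1}(\mu_i)$, one has $\eta_i \in \ker\rho\subseteq \ker f$; moreover, by the Dehn--Nielsen--Baer theorem (for $g\geq 2$) $\phi^{-1}$ is induced by a self-homeomorphism of $\Sigma_g$, so the conjugacy class of $\eta_1$ is the free homotopy class of a genuine non-separating simple closed curve on $\Sigma_g$.

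The second step combines the characteristic hypothesis with the change of coordinates principle. Given an arbitrary non-separating simple closed curve $\gamma$ on $\Sigma_g$, the change of coordinates theorem produces a homeomorphism of $\Sigma_g$ sending $\eta_1$ (as a free homotopy class) to $\gamma$. By Dehn--Nielsen--Baer again, this homeomorphism induces an outer automorphism that lifts to some $\psi \in \Aut(\pi_1(\Sigma_g))$ with $\psi(\eta_1)$ conjugate to $\gamma$. Since $\ker f$ is $\Aut(\pi_1(\Sigma_g))$-invariant and contains $\eta_1$, it also contains $\psi(\eta_1)$, and hence, being normal, it contains $\gamma$ as well.

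The standard symplectic generators $\alpha_1,\beta_1,\ldots,\alpha_g,\beta_g$ of $\pi_1(\Sigma_g)$ are all represented by non-separating simple closed curves, so the previous step forces $\ker f=\pi_1(\Sigma_g)$, contradicting the surjectivity of $f$ onto the nontrivial group $G$. Thus $f$ cannot be elementary, and by Proposition \ref{prop:genus} its thickness is positive. The main point of delicacy is the passage from the algebraic element $\phi^{-1}(\mu_i)\in \pi_1(\Sigma_g)$ to a bona fide topological simple closed curve on $\Sigma_g$: this hinges on Dehn--Nielsen--Baer to realize $\phi^{-1}$ by a genuine self-homeomorphism, and it is precisely this input that bridges the algebraic characteristic hypothesis and the geometric transitivity statement.
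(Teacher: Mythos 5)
Your proof is correct and follows essentially the same route as the paper's: apply Lemma \ref{LiechtiMarche} to show that an elementary $f$ must kill the image under an automorphism of a meridian (hence the conjugacy class of a nonseparating simple closed curve), then use the characteristic hypothesis together with the transitivity of the (extended) mapping class group on nonseparating simple closed curves to conclude that the entire standard generating set lies in $\ker f$, a contradiction. The only difference is that you spell out the Dehn--Nielsen--Baer input more explicitly, whereas the paper leaves it implicit.
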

\begin{proof}
From Lemma \ref{LiechtiMarche} $f$ is elementary iff there exists some automorphism $\phi$ such that 
$f=h\circ i_*\circ \phi^{-1}$, where  $i_*:\pi_1(\Sigma_g)\to \pi_1(H_g)=\mathbb F_g$ is the inclusion induced  homomorphism and $h:\mathbb F_g\to G$. 
If $\alpha$ is an oriented simple closed curve on $\Sigma_g$ let $\overline{\alpha}$ denote the conjugacy class of 
$\alpha$ in $\pi_1(\Sigma_g)$. Let $\alpha$ be a non-separating simple closed curve on $\Sigma_g$ which bounds a properly embedded disk in $H_g$. Then $i_*(\overline{\alpha})=1$, so that 
$\phi(\overline{\alpha})\in \ker f$.  

Since $\ker(f)$ is a characteristic subgroup of $\pi_1(\Sigma_g)$, 
we also have  $\psi(\overline{\alpha})\subset \ker(f)$ for any automorphism 
$\psi\in\Aut(\pi_1(\Sigma_g))$. However, any  non-separating 
simple closed curve $\gamma$ on $\Sigma_g$ is the image of $\alpha$ by some homeomorphism of the surface. 
In particular, the conjugacy classes of  the simple closed curves from a standard generator system 
of $\pi_1(\Sigma_g)$ are contained into $\ker f$. This implies that $f$ would be constant, which is a contradiction, thereby proving the claim. 
\end{proof}

\subsection{Non-geometric quotients}
We can slightly improve the result above, for the specific case of the homomorphisms $f_{p,q}$ from \cite{FL}.  
\begin{proposition}\label{quantum}
Let $f_{p,q}:\pi_1(\Sigma_g)\to P\mathbb G_p(\mathbf F_q)$ be 
the homomorphisms above, defined by prime $p\equiv 3 \; ({\rm mod} \; 4)$,  and prime $q$ large enough, depending on $p$. 
Then $\ker f_{p,q}$ is non-geometric, namely it contains no 
simple closed curve. 
\end{proposition}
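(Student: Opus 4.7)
The plan is to argue by contradiction: suppose that $\overline\alpha \subset \ker f_{p,q}$ for some essential simple closed curve $\alpha$, and split the analysis according to its topological type.

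If $\alpha$ is non-separating, the proof of Proposition \ref{nonelementary} applies verbatim and yields a contradiction, so the substantive work is the separating case. Assume $\alpha$ is separating of topological type $(g_1, g-g_1)$ with $1 \le g_1 \le g-1$. Since $\ker f_{p,q}$ is characteristic, hence $\Aut(\pi_1(\Sigma_g))$-invariant, and since by Dehn-Nielsen-Baer the mapping class group $\Gamma(\Sigma_g)$ acts transitively on isotopy classes of separating simple closed curves of each fixed topological type, every separating simple closed curve of type $(g_1, g-g_1)$ has its conjugacy class in $\ker f_{p,q}$. My goal is to deduce $[\pi_1(\Sigma_g), \pi_1(\Sigma_g)] \subset \ker f_{p,q}$; this would force $f_{p,q}$ to factor through the abelianization $H_1(\Sigma_g;\Z) \cong \Z^{2g}$, contradicting the fact that $P\mathbb{G}_p(\mathbf{F}_q)$ is finite simple and non-abelian.

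For $g_1 = 1$, $\alpha$ is conjugate to $[a_1,b_1]$ in some symplectic basis $(a_i,b_i)$, and its $\Aut$-orbit consists of all commutators $[c,d]$ of simple closed curves meeting exactly once. Applied with the standard pairs $(a_i,b_i)$ this puts $[a_i,b_i] \in \ker f_{p,q}$ for every $i$. To recover the cross-commutators, I would apply it to non-standard meeting-once pairs such as $(a_i a_j, b_i)$, realized by a simple closed curve dual to $a_i+a_j \in H_1$ meeting a dual curve of $b_i$ once; expanding via the identity $[a_i a_j, b_i] = a_i[a_j,b_i]a_i^{-1}\cdot[a_i,b_i]$ and cancelling the already-known $[a_i,b_i]$ places $[a_j, b_i]$ in the normal closure. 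Analogous configurations yield the remaining $[a_i,a_j]$ and $[b_i,b_j]$, so the normal closure of the $\Aut$-orbit coincides with $[\pi_1, \pi_1]$ and the conclusion follows.

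For $g_1 \ge 2$, I would reduce to the $g_1=1$ case by playing off two MCG-translates: writing $X_1 = [a_1,b_1]\cdots[a_{g_1},b_{g_1}]$ and $X_2 = [a_2,b_2]\cdots[a_{g_1+1},b_{g_1+1}]$, both in $\ker f_{p,q}$, the product $X_1^{-1}X_2$ simplifies to $w[a_1,b_1]^{-1}w^{-1}\cdot[a_{g_1+1},b_{g_1+1}]$ with $w = [a_2,b_2]\cdots[a_{g_1},b_{g_1}]$, exhibiting a product of two type-$(1,g-1)$ curve conjugates in the kernel. Iterating such cancellations and using the surface relation $\prod_{i=1}^g[a_i,b_i]=1$ should ultimately produce a type $(1,g-1)$ element in $\ker f_{p,q}$, reducing to the previous paragraph. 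The hard part is precisely this combinatorial reduction together with checking that sufficiently many non-standard intersection-one configurations of simple closed curves exist in $\Sigma_g$ to generate all cross-commutators of a standard symplectic basis inside the normal closure.
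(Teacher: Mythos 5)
Your argument is entirely different from the paper's, and it has real gaps. The paper does not argue via the characteristic property of $\ker f_{p,q}$ at all for the separating case: it uses the quantum (TQFT) representations $\rho_p$ that were used in \cite{FL} to construct $f_{p,q}$ in the first place. The key point there is that a simple closed curve $\gamma\subset\Sigma_g$ is represented in the mapping class group by the element $T_{\gamma^+}T_{\gamma^-}$, whose image under $\rho_{p,(i)}$ is a projective matrix of known finite order ($p$, $2p$, or $p/\gcd(4,p)$ depending on parity and on whether $\gamma$ separates); one then checks for each of the finitely many mapping class group orbits of simple closed curves that the reduction mod $q$ of this matrix (and of its powers below that order) stays nontrivial for $q$ large. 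This hands nontriviality of $f_{p,q}(\gamma)$ directly, with no combinatorics of commutators and no need to show that the kernel is small.

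By contrast your route, if it worked, would prove something much stronger than the Proposition claims: that \emph{every} surjective homomorphism from $\pi_1(\Sigma_g)$ onto a finite nonabelian simple group with characteristic kernel has no separating simple closed curve in the kernel. The paper is clearly deliberate in not asserting this. Proposition~\ref{nonelementary} handles only the nonseparating case in that generality (precisely because nonseparating simple closed curves form a single $\Aut^+(\pi_1(\Sigma_g))$-orbit that includes a generating set), and the separating case is then established only for the particular quantum quotients $f_{p,q}$. This should be a warning sign that the step you describe as ``the hard part'' is not a routine combinatorial lemma.

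Concretely, the gaps are: (1) For $g_1=1$ you need that $[a_ia_j,b_i]$, $[a_i,b_ia_j]$, etc.\ lie in the $\Aut(\pi_1(\Sigma_g))$-orbit of the class of a genus-one separating curve. By Dehn--Nielsen--Baer that orbit consists exactly of (conjugacy classes of) boundaries of embedded one-holed tori, so you must produce \emph{based} simple loops through a common point, meeting only at that point, whose exact based commutator equals $[a_ia_j,b_i]$ -- not merely curves with the right intersection pairing on homology. You assert this but do not verify it; it is not automatic, since having algebraic intersection $1$ on $H_1$ does not by itself give you the based commutator you need. (2) For $g_1\ge 2$ the reduction to $g_1=1$ is only sketched: a product of two conjugates of genus-one classes being in a normal subgroup does not place any single genus-one class in it, and you acknowledge the combinatorics is missing. (3) More structurally, even if every genus-one separating curve were known to lie in $\ker f_{p,q}$, deducing $[\pi_1,\pi_1]\subset\ker f_{p,q}$ requires showing that the normal-and-characteristic closure of the genus-one separating classes exhausts the commutator subgroup; the span argument on $\Lambda^2 H_1/\langle\omega\rangle$ only controls the degree-two graded piece, and does not by itself climb the lower central series. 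None of this is established, so the proposal as written does not give a proof; and the approach is in any case disjoint from the paper's, which goes through orders of quantum matrices rather than through the structure of characteristic subgroups of $\pi_1(\Sigma_g)$.
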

\begin{proof}
The image of the  homotopy class of a based simple closed curve $\gamma$ into $\Gamma(\Sigma_g^1)$ is the 
product of the two commuting left Dehn twists  along the curves ${\gamma^+}$ and ${\gamma^-}$ 
obtained by pushing slightly the curve $\gamma$ off the base point towards the left and right, respectively. 
Therefore the order of $\rho_p(\gamma)$ is equal to the  order of the image of a Dehn twist by the representation $\rho_p$, 
which is known to be $p$ (see \cite{FL}).  

Consider the  projective matrices  $\rho_p(\gamma)^m$, for $1\leq m<p$, where 
$\gamma$ belongs to a finite set of representatives of the set of simple closed  on $\Sigma_g^1$ 
up to the mapping class group action.  Then, for all large enough primes $q$ the reduction mod $q$ of these 
projective matrices are non-trivial. Thus, for every simple closed curve $\gamma$ the elements $f_{p,q}(\gamma)$ have 
order $p$. In particular,  the kernel of $f_{p,q}$ is non-geometric.  
\end{proof}

Theorem \ref{simplequotients} is a consequence Theorem \ref{FL18} along with Propositions \ref{nonelementary} and 
\ref{quantum}.

It is not known what is the largest possible stabilizer of the kernel of an elementary homomorphism. 
The following is relevant: 

\begin{proposition}
Let $\Gamma(H_g^1)$ be the mapping class group of the punctured handlebody $H_g^1$. 
If $f:\pi_1(\Sigma_g)\to G$ is a surjective elementary homomorphism onto a finite quotient whose 
kernel  is invariant by the handlebody subgroup $\Gamma(H_g^1)$, then   
there exists a characteristic finite quotient of $\mathbb F_g$. 
\end{proposition}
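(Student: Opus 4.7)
The plan is to show that the kernel of the homomorphism $h:\mathbb F_g\to G$ arising in the elementary factorization of $f$ is characteristic in $\mathbb F_g$, so that $G\cong \mathbb F_g/\ker h$ is itself the desired characteristic finite quotient. First, by Lemma~\ref{LiechtiMarche}, since $f$ is elementary we may write $f=h\circ p\circ\phi^{-1}$, where $p:\pi_1(\Sigma_g)\to\pi_1(H_g)\cong \mathbb F_g$ is the surjection induced by the inclusion of $\Sigma_g$ into the standard handlebody $H_g$, $\phi$ is an orientation-preserving automorphism of $\pi_1(\Sigma_g)$, and $h:\mathbb F_g\to G$ is a surjection (it is surjective since $f$ is). The crucial external input is the classical theorem of Griffiths (extended to the bordered setting by Luft, Laudenbach and others): the natural representation
\[ \Gamma(H_g^1)\longrightarrow \mathrm{Aut}(\pi_1(H_g))=\mathrm{Aut}(\mathbb F_g) \]
is surjective, so every automorphism of $\mathbb F_g$ is induced by some handlebody mapping class.

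Once the factorization is reduced to the form $f=h\circ p$ (i.e.\ $\phi=\mathrm{id}$), the argument is direct. For each $\mu\in\Gamma(H_g^1)$, the hypothesis $\mu_*(\ker f)=\ker f$ yields some $\theta_\mu\in\mathrm{Aut}(G)$ with $f\circ\mu_*=\theta_\mu\circ f$; unpacking this using $p\circ\mu_*=\bar\mu\circ p$ (with $\bar\mu\in\mathrm{Aut}(\mathbb F_g)$) gives $h\circ\bar\mu=\theta_\mu\circ h$, and hence $\bar\mu(\ker h)=\ker h$. The Griffiths--Luft surjectivity then ensures this for every $\bar\mu\in\mathrm{Aut}(\mathbb F_g)$, so $\ker h$ is $\mathrm{Aut}(\mathbb F_g)$-invariant, i.e.\ characteristic in $\mathbb F_g$, and $\mathbb F_g/\ker h\cong G$ is the required characteristic finite quotient.

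The main obstacle is the reduction to $\phi=\mathrm{id}$. For general $\phi$ the factorization realizes $f$ elementarily through a ``twisted'' handlebody $H'_g$, with inclusion-induced map $p'=p\circ\phi^{-1}$ and associated handlebody subgroup $\phi\,\Gamma(H_g^1)\,\phi^{-1}$; the hypothesis, however, gives invariance only under the \emph{original} $\Gamma(H_g^1)$. Two natural approaches present themselves: (i) exploit the ambiguity in Lemma~\ref{LiechtiMarche} to modify the factorization so that $\phi\in\Gamma(H_g^1)$, in which case $\bar\phi\in\mathrm{Aut}(\mathbb F_g)$ is defined and can be absorbed by replacing $h$ with $h\circ\bar\phi^{-1}$; or (ii) show that the intersection $\Gamma(H_g^1)\cap\phi\,\Gamma(H_g^1)\,\phi^{-1}$ already projects onto a large enough subgroup of $\mathrm{Aut}(\mathbb F_g)$, since for $\mu$ in this intersection the conjugate $\phi^{-1}\mu\phi$ descends to an automorphism of $\mathbb F_g$ preserving $\ker h$. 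This reconciliation between the two handlebody structures is the technically delicate step; once it is handled, the preceding two-line computation delivers the characteristic finite quotient.
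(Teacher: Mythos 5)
Your argument is essentially the same as the paper's: factor $f$ via the Stallings--Jaco/Liechti--March\'e lemma, invoke Luft's exact sequence
$1\to Tw(H_g^1)\to \Gamma(H_g^1)\to \Aut^+(\mathbb F_g)\to 1$
(equivalently the surjectivity of $\Gamma(H_g^1)\to\Aut^+(\mathbb F_g)$), and transport the $\Gamma(H_g^1)$-invariance of $\ker f$ to $\Aut(\mathbb F_g)$-invariance of $\ker h$. Your computation in the case $\phi=\mathrm{id}$ is exactly what the paper does, including the passage from $\Aut^+$ to the full automorphism group.

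The point you flag at the end is, however, a genuine issue and you do not close it. In the factorization $f=h\circ p\circ\phi^{-1}$ the automorphism $\phi$ has no reason to preserve $\ker p$ (the normal closure of the meridians of the fixed handlebody $H_g$): the lemma only gives \emph{some} orientation-preserving automorphism of $\pi_1(\Sigma_g)$. If $\phi$ does preserve $\ker p$, then $p\circ\phi^{-1}=\bar\phi^{-1}\circ p$ for $\bar\phi\in\Aut(\mathbb F_g)$, one absorbs $\bar\phi^{-1}$ into $h$, and the rest is your two-line computation. If $\phi$ does not preserve $\ker p$, then $\ker f$ contains $\phi^{-1}(\ker p)$ but not obviously $\ker p$ itself, and the $\Gamma(H_g^1)$-invariance of $\ker f$ only translates into invariance of $p^{-1}(\ker h)$ under the conjugated group $\phi\,\Gamma(H_g^1)_*\,\phi^{-1}$, which need not map meridians to meridians. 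Neither of your proposed remedies (i) or (ii) is carried out, so the proof as written is incomplete precisely at the step you identify as ``technically delicate.''

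For what it is worth, the paper's own proof is equally terse on this point: it writes $f=h\circ p\circ\phi$ and then passes directly from ``$\ker f$ is $\Gamma(H_g^1)$-invariant'' to ``$h$ is $\Aut^+(\mathbb F_g)$-invariant'' without discussing $\phi$. So you have correctly reconstructed the intended argument and, in addition, have put your finger on the place where a careful reader would want more detail. To actually close the gap one either needs to argue that the handlebody implicit in the elementary factorization can be taken to be the one whose handlebody group stabilizes $\ker f$ (equivalently, that $\ker f\supseteq\ker p$ for the \emph{fixed} $H_g$), or one must read the hypothesis of the proposition in the stronger sense that the elementary factorization and the $\Gamma(H_g^1)$-invariance refer to the same handlebody structure, in which case $\phi$ disappears and your computation goes through verbatim.
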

\begin{proof}
If $f$ is elementary, then up to composing with an automorphism $\phi$ of $\pi_1(\Sigma_g)$, 
it factors through $i_*:\pi_1(\Sigma_g)\to \pi_1(H_g)=\mathbb F_g$, namely $f=h\circ i_*\circ \phi$, for some 
homomorphism $h:\pi_1(H_g)\to G$.

Recall from \cite{Hensel} that  the mapping class group $\Gamma(H_g^1)$ of the once punctured (or marked) 
handlebody embeds into the mapping class group $\Gamma(\Sigma_g^1)$ of its boundary surface. Moreover, 
Luft (\cite{Luft}) showed that the action in homotopy provides an exact sequence:  
\[ 1\to Tw(H_g^1)\to \Gamma(H_g^1)\to \Aut^+(\mathbb F_{g})\to 1\]
whose kernel is the group of twists, generated by the Dehn twists along meridians of $\Sigma_g^1$ (i.e. curves bounding disks in $H_g^1$).

As $f$ is invariant by $\Gamma(H_g^1)$, the exact sequence 
above shows that the homomorphism $h$ is also invariant by $\Aut^+(\mathbb F_g)$. 
The same argument also work for the full automorphism group. 
\end{proof}

\begin{remark}
Let $P: \pi_1(\Sigma_{g+1})\to \pi_1(\Sigma_g)$ be the map induced by a pinch map 
$P: \Sigma_{g+1}\to \Sigma_g$. Let $\gamma$ be a simple closed curve on $\Sigma_g$ based at the point of $\Sigma_g$ corresponding to the image of the pinched handle. The based homotopy class 
of $\gamma^p$ can be realized by $p$  parallel copies of $\gamma$ which only intersect at the base point. 
Observe that $\gamma^p$ is the image by the pinch map of a simple closed loop $\widehat{\gamma}$ in $\Sigma_{g+1}$. If $f$ is a homomorphism into a group $G$ and $f(\gamma)$ has order $p$ then  
$f\circ P (\widehat{\gamma})=1$ and hence $f\circ P$ has not anymore non-geometric kernel.  
\end{remark}

\begin{remark}
The method used in \cite{FL} also provides epimorphisms $f:\pi_1(\Sigma_g\setminus B)\to G$ 
onto finite simple non-abelian groups $G$, whose kernels  are 
$\Gamma(\Sigma_g\setminus B)$-invariant. 
\end{remark} 
 
\section{Stabilizing cohomology groups}
We now consider approximated lifts of homomorphisms  into $S_n$. 
Let $\gamma_0G=G$, $\gamma_{k+1}G=[\gamma_kG,G]$ denote the lower central series of the group $G$. 
It is well-known that $PB_n$ is residually torsion-free nilpotent, namely 
$\bigcap_{k=0}^\infty \gamma_kPB_n=1$ and $A_{k}=\frac{\gamma_{k-1}PB_n}{\gamma_{k}PB_n}$ are finitely generated torsion-free abelian groups. We denote  $B_n^{(k)}$ the quotient $B_n/\gamma_kPB_n$. 
We then have a series of {\em abelian extensions} 
\[ 1\to  \gamma_kPB_n/\gamma_{k+1}PB_n\to B_n^{(k+1)}\to B_n^{(k)}\to 1\]
The question whether a homomorphism $f_k:\pi_1(\Sigma_g)\to B_n^{(k)}$ admits 
a lift to $f_{k+1}: \pi_1(\Sigma_g)\to B_n^{(k+1)}$ can be reformulated in purely cohomological 
terms. For every $k\geq 1$ there exist examples of homomorphisms $f_k$ which admit no lift.  
Our goal here is to show that the lifting is always possible when $k=0$.

In order to do that, we first show that the pinching map $P$ induces an injection at cohomological level. For the sake of simplicity we only consider the unramified case, but the result works in full generality.  Specifically, let $f:\pi_1(\Sigma_g)\to G$ be a surjective homomorphism and 
$A$ be a finitely generated $G$-module, say by means of a homomorphism 
$\tau:G\to \Aut(A)$. Then $A$ inherits a $\pi_1(\Sigma_g)$-module structure through $\tau\circ f$.
Let now $P:\pi_1(\Sigma_{g+1}) \to \pi_1(\Sigma_g)$ be the pinch map, which is given in convenient 
basis $\{\alpha_i, \beta_i\}_{i=1,\ldots,g+1}$ and $\{a_i, b_i\}_{i=1,\ldots,g}$ by
\[ P(\alpha_i)=a_i, P(\beta_i)=b_i, \, i\leq g, \, P(\alpha_{g+1})=P(\beta_{g+1})=1\]   
Then $f\circ P: \pi_1(\Sigma_{g+1})\to G$ provides a $\pi_1(\Sigma_{g+1})$-module structure on $A$ by means of $\tau\circ f\circ P$. 
Our main result is the following: 

\begin{proposition}\label{injectivestab}
The homomorphism $P^*: H^2(\pi_1(\Sigma_g), A)\to H^2(\pi_1(\Sigma_{g+1}), A)$ is injective. 
\end{proposition}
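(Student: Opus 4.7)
The plan is to reduce the injectivity claim to a tautology via Poincar\'e duality with local coefficients. Since $g \geq 1$, both $\Sigma_g$ and $\Sigma_{g+1}$ are closed orientable surfaces that are aspherical $K(\pi,1)$ spaces, so group cohomology coincides with local system cohomology: $H^2(\pi_1(\Sigma_g), A) \cong H^2(\Sigma_g; A)$ and analogously for $\Sigma_{g+1}$. Because the surfaces are oriented, the orientation sheaf is trivial, and Poincar\'e duality provides cap-product isomorphisms
\[ \Phi_g \colon H^2(\Sigma_g; A) \xrightarrow{\sim} H_0(\Sigma_g; A) = A_{\pi_1(\Sigma_g)}, \quad \Phi_{g+1}\colon H^2(\Sigma_{g+1}; A) \xrightarrow{\sim} A_{\pi_1(\Sigma_{g+1})}. \]

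Next I would compare the two coinvariant modules. The $\pi_1(\Sigma_{g+1})$-module structure on $A$ is defined by $\tau \circ f \circ P$, so it factors through the surjection $P$. Therefore the submodules $\{\gamma \cdot a - a\}_{\gamma \in \pi_1(\Sigma_{g+1})}$ and $\{g \cdot a - a\}_{g \in \pi_1(\Sigma_g)}$ of $A$ coincide, and the canonical map $P_*\colon A_{\pi_1(\Sigma_{g+1})} \to A_{\pi_1(\Sigma_g)}$ is the identity on the common underlying abelian group.

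The pinch map has degree one (it is a homeomorphism away from the collapsed genus-one subsurface), so $P_*[\Sigma_{g+1}] = [\Sigma_g]$. Naturality of the cap product then yields, for every $\alpha \in H^2(\pi_1(\Sigma_g), A)$,
\[ P_*\bigl(P^*\alpha \cap [\Sigma_{g+1}]\bigr) \;=\; \alpha \cap P_*[\Sigma_{g+1}] \;=\; \alpha \cap [\Sigma_g], \]
equivalently $P_* \circ \Phi_{g+1} \circ P^* = \Phi_g$. Combined with the previous paragraph this shows that $P^*$ is conjugate to the identity on $A_{\pi_1(\Sigma_g)}$ under the Poincar\'e duality isomorphisms, so it is an isomorphism and in particular injective.

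The delicate point is the naturality of Poincar\'e duality with local coefficients under the pinch map: $P$ is only continuous, not a homeomorphism. A safe way to carry out this step is to work at the level of cellular chains with coefficients in $A$, verifying the cap-product formula above directly and checking that the chain-level $P_*$ acts as the identity on the $H_0$ coinvariants once the module structures are identified. Observe that the argument actually yields that $P^*$ is an isomorphism, which is slightly stronger than the stated injectivity and likely what the authors will exploit in the sequel.
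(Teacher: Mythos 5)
Your proof is correct, and it takes a genuinely different and more conceptual route than the paper's argument. You reduce the injectivity of $P^*$ to Poincar\'e duality with local coefficients: capping with the fundamental class identifies $H^2(\pi_1(\Sigma_h),A)$ with the coinvariants $A_{\pi_1(\Sigma_h)}$ for $h=g,g+1$; since the $\pi_1(\Sigma_{g+1})$-action on $A$ factors through the surjection $P$, the two coinvariant modules coincide and $P_*$ on $H_0$ is the identity; and degree-one-ness of the pinch map together with naturality of the cap product forces $P^*$ to match the duality isomorphisms, hence to be an isomorphism. The paper instead gives a direct cocycle-level computation: one takes a normalized $2$-cocycle $w$ on $\pi_1(\Sigma_g)$ with $P^*w=\delta\phi$, exploits the free subgroups $H=\langle\alpha_1,\beta_1,\dots,\alpha_g,\beta_g\rangle$ and $K=\langle\alpha_{g+1},\beta_{g+1}\rangle$ of $\pi_1(\Sigma_{g+1})$, shows that $\phi$ vanishes on the normal closure $L$ of the surface relator $R$ in $H$, and therefore descends to a cochain on $H/L\cong\pi_1(\Sigma_g)$ trivializing $w$. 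Your approach is shorter and yields the stronger statement that $P^*$ is an isomorphism, but it relies on Poincar\'e duality with local coefficients and its naturality. The paper's approach is more elementary and self-contained (just cocycle identities and the surface group presentation), at the cost of being longer; it also makes transparent exactly which part of the group-theoretic structure is being used. Incidentally, your worry about naturality failing because $P$ is not a homeomorphism is unfounded: cap-product naturality with local coefficients holds for any continuous map provided the source carries the pulled-back system, which is exactly the situation here since the $\pi_1(\Sigma_{g+1})$-module structure is defined as $\tau\circ f\circ P$. Note also that the sequel to this proposition in the paper only uses injectivity, not the full isomorphism, so the strengthening, while valid, is not needed downstream.
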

\begin{proof}
Consider a normalized 2-cocycle $w:\pi_1(\Sigma_g)\times \pi_1(\Sigma_g)\to A$, namely such that $w(x,1)=w(1,x)=0$, 
 whose cohomology class lies in $\ker P^*$. 
Then the image 2-cocycle $P^*w$ is given by 
$P^*w(x,y)=w(P(x),P(y))$. By hypothesis it is exact, namely of the form
\[P^*w(x,y)=\delta \phi(x,y)= x\cdot \phi(y) -\phi(xy) + \phi(x) \]
where 
$\phi:\pi_1(\Sigma_{g+1})\to A$ is a 1-cochain.  
Let $H=\langle \alpha_i, \beta_i, i\in \{1,2,\ldots,g\}\rangle$ 
and $K=\langle \alpha_{g+1},\beta_{g+1}\rangle$ be the subgroups of $\pi_1(\Sigma_{g+1})$ 
generated by the respective elements and note that they are free groups. 
We observe that whenever $x\in H$ and $u,v\in K$ we have: 
\[ \phi(x u)= x\cdot \phi(u)+\phi(x)- w(P(x),P(u))=x\cdot \phi(u) + \phi(x)\]
\[ \phi(ux)=u\cdot \phi(x)+ \phi(u) -w(P(u),P(x))=\phi(x)+\phi(u)\]
\[ \phi(uv)=u\cdot \phi(v)+ \phi(u) -w(P(u),P(v))=\phi(v)+\phi(u)\]
The last equation implies that 
\[ \phi(u^{-1})=-\phi(u), \; {\rm for}\; u\in K,\]
so that 
\[ \phi([\alpha_{g+1},\beta_{g+1}])=0\]
We aim at analyzing the restriction of $\phi$ to $H$. 
Set 
\[ R=\prod_{i=1}^g [\alpha_i,\beta_i]\in H\]
Then 
\[0=\phi(1)=\phi(R [\alpha_{g+1},\beta_{g+1}])=R\cdot \phi(R)= P(R)\cdot \phi(R)=\phi(R)\]
Consider further $x\in H$. By above we have 
\[ \begin{array}{lll}
\phi(xRx^{-1}) &=& xR\cdot \phi(x^{-1})+\phi(x)- P^*w(xR, x^{-1})\\
 &=&xR\cdot (x^{-1}\cdot (P^*w(x,x^{-1}) -\phi(x))+\phi(x)- P^*w(xR, x^{-1})\\
 &=&xRx^{-1}\cdot P^*w(x,x^{-1}) - xRx^{-1}\cdot \phi(x)+\phi(x)- P^*w(xR, x^{-1})\\
 &=&xRx^{-1}\cdot P^*w(x,x^{-1}) - P^*w(xR, x^{-1})\\
\end{array}
\]
the last equality following from the fact that $P(xRx^{-1})=1$ thereby $xRx^{-1}$ is 
acting trivially on $A$. 
Recall that $w$ is a 2-cocycle and hence satisfies 
 \[ x\cdot w(y,z)- w(xy,z)+w(x,yz)-w(x,y)=0\]
Its pullback verifies then 
\[ \begin{array}{lll}
\phi(xRx^{-1}) &=& xRx^{-1}\cdot P^*w(x,x^{-1})- P^*w(xR,x^{-1})=\\
&=& P^*w(xRx^{-1},x)-P^*w(xRx^{-1},1)=
w(P(xRx^{-1}),P(x))=0 \\
\end{array}
\]
It follows that 
\[ \phi(xR^{-1}x)=0\]
Let $L\triangleleft H$  be the normal subgroup generated by $R$ within $H$.
Every element of $L$ can be written as a product of conjugates of $R$ and $R^{-1}$ within $H$. 
If $x,y\in L$ and $\phi(x)=\phi(y)=0$, then 
\[ \phi(xy)=x\cdot \phi(y)+\phi(x)- w(P(x),P(y))=0\]
because $P(x)=P(y)=1$. By induction on the number of conjugates, we derive that 
$\phi|_{L}:L\to A$ is trivial. 

Now, if $x= u y$, where $u\in L$, then 
\[ \phi(x)=\phi(uy)=u\phi(y)+\phi(u)-w(P(u), P(y))=
\phi(y)\]
because $P(u)=1$. It follows that $\phi$ is constant on right cosets of $L$, so that 
$\phi$ induces a well-defined map $\overline{\phi}:H/L\to A$.
Moreover the restriction of the homomorphism 
$P:\pi_1(\Sigma_{g+1})\to \pi_1(\Sigma_g)$ to $H$ induces an isomorphism of $\overline{P}:H/L\to 
\pi_1(\Sigma_g)$. 
  
Observe that the $1$-chain $\overline{\phi}$ satisfies for all $x,y\in H/L$  
\[\delta \overline{\phi}(x,y) =P^*w(\tilde{x},\tilde{y})=w(\overline{P}(x),\overline{P}(y))\]
where $\tilde{x},\tilde{y}$ are lifts in $H$ of $x,y$, respectively. 
It follows that $w$ is exact, as claimed. 
 \end{proof}

Then using Proposition \ref{injectivestab} we obtain a 
conceptual (without calculation) proof of the following: 

\begin{proposition}
Any homomorphism $f:\pi_1(\Sigma_g)\to S_n$ has a lift 
$f_1:\pi_1(\Sigma_g)\to  B_n^{(1)}$. 
\end{proposition}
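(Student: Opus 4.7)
The plan is to reformulate lifting across $B_n^{(1)} \to S_n$ as the vanishing of a cohomological obstruction, and then to clear that obstruction by combining the asymptotic existence of lifts from Theorem \ref{finitelift} with the cohomological injectivity Proposition \ref{injectivestab}.

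First I would identify the extension. Since $\gamma_0 PB_n = PB_n$, the group $B_n^{(1)} = B_n/[PB_n, PB_n]$ sits in an abelian extension
\begin{equation*}
1 \to A \to B_n^{(1)} \to S_n \to 1, \qquad A := PB_n/[PB_n, PB_n],
\end{equation*}
in which $S_n$ acts on the free abelian group $A$ by permuting the generators $A_{ij}$ according to the formula $\sigma \cdot A_{ij} = A_{\min(\sigma(i),\sigma(j)),\max(\sigma(i),\sigma(j))}$ already used in the proof of Theorem \ref{finitelift}. This extension is classified by a class $[e] \in H^2(S_n; A)$, and a homomorphism $f: \pi_1(\Sigma_g) \to S_n$ lifts to $B_n^{(1)}$ if and only if the pullback $f^*[e] \in H^2(\pi_1(\Sigma_g); A)$ vanishes, where $A$ carries the $\pi_1(\Sigma_g)$-module structure induced through $f$.

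Next I would invoke Theorem \ref{finitelift} in the unramified case: for all sufficiently large $k$, the stabilization $f \circ P^k: \pi_1(\Sigma_{g+k}) \to S_n$ admits a lift all the way to $B_n$, hence \emph{a fortiori} to $B_n^{(1)}$. Consequently the pulled-back obstruction vanishes:
\begin{equation*}
(P^k)^* f^*[e] = (f \circ P^k)^*[e] = 0 \in H^2(\pi_1(\Sigma_{g+k}); A),
\end{equation*}
the coefficient module on $\pi_1(\Sigma_{g+k})$ being exactly the pullback along $P^k$ of the $\pi_1(\Sigma_g)$-module $A$.

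Finally I would iterate Proposition \ref{injectivestab}: applied stage by stage (to successive pinch maps, each time with coefficients pulled back from the previous stage) it shows that the composition $(P^k)^*: H^2(\pi_1(\Sigma_g); A) \to H^2(\pi_1(\Sigma_{g+k}); A)$ is injective. Combined with the vanishing above, this forces $f^*[e] = 0$, so $f$ lifts, as claimed. The one point I would double-check carefully is the coefficient bookkeeping in the iterated use of Proposition \ref{injectivestab}: it must be applied with a non-trivial module $A$, but the key observation is that the $\pi_1$-module structure factors through $f$ into $S_n \to \Aut(A)$ and is therefore preserved verbatim by each pinch map, which is exactly the hypothesis of Proposition \ref{injectivestab}. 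This is the only real subtlety; once it is observed, the proof is essentially a formal diagram chase and avoids any explicit cocycle calculation in $B_n$.
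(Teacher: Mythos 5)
Your proposal is correct and follows exactly the same route as the paper: reformulate lifting across the abelian extension $1 \to A_1 \to B_n^{(1)} \to S_n \to 1$ as vanishing of the pulled-back extension class in $H^2(\pi_1(\Sigma_g); A_1)$, kill the stabilized obstruction using Theorem \ref{finitelift}, and conclude via (iterated) injectivity of $P^*$ from Proposition \ref{injectivestab}. Your explicit remark that the $\pi_1$-module structure factors through $f$ and is therefore preserved verbatim at each pinch stage is the right bookkeeping and is implicit in the paper's one-line invocation of the injectivity result.
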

\begin{proof}
Let $\mathcal E_1$ denote the extension with abelian kernel: 
\[ 1\to A_1\to B_n^{(1)}\to S_n\to 1\]
whose characteristic class $c_{\mathcal E_1}$ is denoted $e_1\in H^2(S_n,A_1)$. 

Observe first that $f$ admits a lift $f_1$ to $B_n^{(1)}$ if and only if 
the pull-back extension $f^*\mathcal E_1$ admits a section $s$ over $\pi_1(\Sigma_g)$.
This amounts to saying that $f^*\mathcal E_1$ is a split extension which 
is equivalent with $c_{f^*\mathcal E_1}=f^*e_1=0\in H^2(\pi_1(\Sigma_g), A_1)$, where 
$A_1$ has a $\pi_1(\Sigma_g)$-module structure induced by $f$. 

On the other hand Theorem \ref{finitelift} shows that after sufficiently many stabilizations 
$f\circ P_{g,h}$ lifts to a homomorphism $F$ into $B_n$, where $P_{g,h}:\pi_1(\Sigma_{g+h})\to \pi_1(\Sigma_g)$ is the pinch map of the last $h$ handles. 
Let $Q^{(1)}:B_n\to B_n^{(1)}$ be the quotient by $\gamma_1PB_n$. Then $Q^{(1)}\circ F$ is a 
lift of $f\circ P_{g,h}$ to $B_n^{(1)}$. 

The previous argument implies that  $(f\circ P_{g,h})^*\mathcal E_1$ is a split extension over 
$\pi_1(\Sigma_{g+h})$ and hence $P_{g,h}^*\circ f^*e_1=0\in H^2(\pi_1(\Sigma_{g+h}), A_1)$. 
Proposition \ref{injectivestab} implies that $f^*e_1=0\in H_2(\pi_1(\Sigma_g),A_1)$ and thus the claim follows. 
\end{proof}

\begin{remark}
The lifts $f_1$ of $f$ modulo $A_1$-conjugacy are in one-to-one correspondence 
with the section $s$ of $f^*\mathcal E_1$, and thus they form an affine space 
with underlying vector space $H^1(\pi_1(\Sigma_g), A_1)$. It seems possible that  
for any $f$ there exists some lift $f_1$ of $f$ which further can be lifted to $B_n^{(2)}$. 
\end{remark}

\section{Spherical functions and proof of Theorem \ref{separatebraid}}\label{invariants}
\subsection{Pullback spherical functions from Lie groups} 
A key algebraic object in this section is the representation space
\[M_G(\Sigma,B)\subseteq {\rm Hom}(\pi_1(\Sigma\setminus B), G)/G,\]
containing the subspace  $M_G(\Sigma,B, \mathbf c)$  of classes of representations 
with prescribed conjugacy classes of peripheral loops. 
There are analogous moduli spaces of mapping class group orbits: 
\[\M_G(\Sigma, B)=\Gamma(\Sigma\setminus B)\backslash  M_G(\Sigma,B).\] 
We observed in the first section that the corresponding discrete spaces  for 
$G=B_n$ correspond to (strong) equivalence classes of braided surfaces. 
One should note that $G=\Gamma(S)$ corresponds to achiral Lefschetz fibrations with fiber $S$. 
Our aim is to construct functions on these spaces, corresponding in particular to 
invariants of braided surfaces. 
 
In order to treat the unbranched case $B=\emptyset$ we observe that we have a  natural  
embedding:  
\[M_G(\Sigma,\emptyset)\subset {\rm Hom}(\pi_1(\Sigma\setminus \{p\}), G)/G,\]
which provides functions on $M_G(\Sigma,\emptyset)$ by restricting functions defined on the 
right hand side space. 
   
We construct spherical functions on representation spaces associated to discrete groups 
by pullback of spherical functions defined on Lie groups.  
Let $R:G\to \frak G$ be a homomorphism representation of $G$ into the Lie group $\frak G$.  
To any $f:\pi_1(\Sigma\setminus B)\to G$ we associate the homomorphism 
$R_*(f)=R\circ f: \pi_1(\Sigma\setminus B)\to \mathfrak G$.
This induces a map 
\[ R_*: M_G(\Sigma, B) \to M_{\mathfrak G}(\Sigma, B)\]
Obviously the map $R_*$ only depends on the class of $R$ inside 
${\rm Hom}(G, \mathfrak G)/\mathfrak G$. 
Now, the representation variety $M_{\mathfrak G}(\Sigma, B)$ was the subject on 
intensive study, when $\mathfrak G$ is a Lie group.  
 
If $B\neq\emptyset$, then   $M_{G}(\Sigma, B)=G^m/G$, where 
$m$ is the rank of the free group $\pi_1(\Sigma\setminus B)$ and $G$ acts diagonally 
by conjugation  on $G^m$.   
Note that  $M_{G}(\Sigma, B)$ can also be identified with the double 
coset space $G\backslash G^{m+1}/G$, where $G$ is diagonally embedded 
in $G^{m+1}$. 

Let now introduce some terminology from representation theory. 
If $\rho$ is a unitary representation of a group $H$ in a Hilbert space $V$, then a  {\em matrix coefficient} is the function $\phi(x)=\langle \rho(x)v, w\rangle$, where $v,w\in V$. 
Let $L(H)$ be the vector space of complex functions on $H$.  
If $K\subseteq H$ is a subgroup, we denote by 
$L(K\backslash H/K)\subset L(H)$ the subspace of functions which are bi-$K$-invariant, namely 
such that $\phi(k_1 xk_2)=\phi(x)$, for $k_i\in K, x\in H$.  A matrix coefficient 
$\phi(x)=\langle \rho(x)v, w\rangle$ is bi-$K$-invariant if $v, w$ belong to the space 
of $K$-invariants vectors $V^K$. 

Observe first that in the case when $V$ is finite dimensional complex vector space, the  
same formula define a bi-$K$-invariant function, even if $\langle \;,\;\rangle$ is only a Hermitian 
form on $V$, not necessarily positive definite. 
The functions obtained this way will  be called {\em $K$-spherical functions} on $H$; we will add {\em unitary} if we want to specify that the Hermitian form is positive definite.  
Carrying this construction for the pair $K=G$ and $H=G^k$ we  
obtain a family of complex functions on 
$G\backslash  G^k/G$, called spherical functions.
The main question addressed here is to what extent the  spherical functions 
separate points of representation spaces.

\subsection{Compact Lie groups} 
We can organize spherical function by using the map $R_*$ associated to a 
representation of $G$ into some compact group $\mathfrak G$ in order to pullback spherical functions 
from $\mathfrak G^k$. 
The following should be well-known, but for  lack
of references, we sketch the proof: 
\begin{proposition}\label{sphericalseparate}
Let $K\subset H$ be either finite groups or compact connected  Lie groups. Then the unitary  $K$-spherical functions on $H$ separate the points of $K\backslash  H/K$. 
\end{proposition}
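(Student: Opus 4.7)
The plan is to reduce the claim to a density statement: every continuous bi-$K$-invariant function on $H$ is a uniform limit of unitary $K$-spherical functions. Granting this, if $x,y\in H$ lie in distinct double cosets $KxK\neq KyK$, then since $K$ is compact the quotient $K\backslash H/K$ is compact Hausdorff, so by Urysohn there is a continuous bi-$K$-invariant function $\psi$ on $H$ with $\psi(x)\neq \psi(y)$. Approximating $\psi$ by spherical functions then forces at least one spherical function to take different values at $x$ and $y$.

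First I would invoke Peter--Weyl to obtain uniform density in $C(H)$ of the algebra $\mathcal A$ generated by matrix coefficients of finite dimensional irreducible unitary representations of $H$. In the finite case this is the classical decomposition of the regular representation (which gives $L^2(H)=C(H)$ with all coefficients); in the compact Lie case one applies Stone--Weierstrass to $\mathcal A$, noting that $\mathcal A$ separates points of $H$, is closed under conjugation (because $(\rho,V)$ admits the dual $(\rho^*,V^*)$) and under products (via tensor product representations), and contains the constants (via the trivial representation).

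Next I would pass to the bi-$K$-invariant setting via the averaging operator
\[
(Tf)(g)=\int_K\int_K f(k_1 g k_2)\,dk_1\,dk_2,
\]
where $dk$ is normalized Haar measure on $K$ (counting measure in the finite case). For a matrix coefficient $\phi(g)=\langle \rho(g)u,v\rangle$ of an irreducible unitary representation $(\rho,V)$ one has
\[
(T\phi)(g)=\langle \rho(g)P_K u,\, P_K v\rangle,\qquad P_K=\int_K\rho(k)\,dk,
\]
and $P_K$ is the orthogonal projection of $V$ onto the $K$-fixed subspace $V^K$. Hence $T\phi$ is a unitary $K$-spherical function. Since $T$ is a bounded operator on $C(H)$ of norm one which fixes every bi-$K$-invariant function, applying $T$ to a uniform approximation of $\psi$ in $\mathcal A$ yields a uniform approximation of $\psi=T\psi$ by sums of $K$-spherical functions.

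The only real step requiring care is the compatibility of the Hermitian-valued notion of spherical function in the statement with the unitary version one actually produces this way; this is painless because any matrix coefficient $\langle \rho(g)v,w\rangle$ with $v,w\in V^K$ of a finite dimensional unitary representation is automatically a unitary $K$-spherical function in the sense defined, and these are exactly the functions we need to separate double cosets. Combining the three steps finishes the proof.
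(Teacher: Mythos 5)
Your proof is correct, and it takes a genuinely different route from the paper's. The paper treats the finite case by an exact dimension count: it decomposes $L(H/K)$ into irreducibles $\oplus_j V_j^{m_j}$, invokes Wielandt's lemma to get $\dim L(K\backslash H/K)=\sum_j m_j^2$, uses Frobenius reciprocity to identify $m_j=\dim V_j^K$, and then concludes that the $\sum_j m_j^2$ orthonormal matrix coefficients built from bases of the $V_j^K$ form a \emph{basis} of $L(K\backslash H/K)$; separation follows because any function on the finite set $K\backslash H/K$ is a linear combination of spherical ones. The compact Lie case in the paper is then dispatched with a brief appeal to Peter--Weyl. Your argument instead works by density: Urysohn gives a continuous bi-$K$-invariant separating function, Peter--Weyl/Stone--Weierstrass give uniform density of matrix coefficients in $C(H)$, and the averaging operator $T$ (a norm-one projection onto bi-$K$-invariant functions sending the matrix coefficient of $(u,v)$ to that of $(P_K u, P_K v)$) transports this density to the bi-$K$-invariant subspace. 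What the paper's approach buys is the sharper statement that spherical functions form an orthonormal basis of $L(K\backslash H/K)$ in the finite case; what yours buys is a single uniform argument that handles the finite and compact Lie group cases simultaneously, and it actually fills in the details that the paper leaves terse in the compact case (where one must justify why density of matrix coefficients in $L^2(H)$ or $C(H)$ yields separation on $K\backslash H/K$ — exactly the role played by your operator $T$). Both proofs are sound.
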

\begin{proof}
Let $L(H/K)$ be the Hilbert space of complex valued functions on 
 $H/K$, which is endowed with the tautological left action by $H$. 
 
Consider first the case when $H$ is finite. 
Write $L(H/K)$ as a sum of irreducible representations $V_j$, along with their 
multiplicities $m_j$:  
\[ L(H/K)=\oplus_{j\in J} V_j^{m_j}\]
By Wielandt's lemma (see e.g. \cite{Cecherini}, Thm.3.13.3), the number of 
$K$-orbits in $H/K$ is equal to $\sum_{j\in J} m_j^2$, so that 
\[\dim L(K\backslash H/K)= \sum_{j\in J} m_j^2\]
Now the Frobenius reciprocity (see \cite{Savage}, Thm. 1.4.9) gives us 
\[ m_j =\dim V_j^K\]
Consider the matrix coefficients associated to irreducible representations of 
$\mathfrak G$ into finite dimensional vector spaces $V$ and vectors $v, w$ arising from a basis 
of $V^K$. According to (\cite{Cecherini}, Lemma 3.6.3) matrix coefficients of this form are 
orthonormal in $L(H)$. Since they are $\sum_{j\in J}m_j^2$ elements of $L(K\backslash H/K)$, it follows that they form a basis of $L(K\backslash H/K)$. In particular, the basis functions separate points of $K\backslash H/K)$, as the set of 
all functions in  $L(K\backslash H/K)$ does separate.

The proof in the case where $H$ is a compact Lie group  follows the same lines as in the finite case, now using instead the Peter-Weyl theorem. For instance, matrix coefficients are dense in the space $L^2(G)$, the  
$H$-spherical functions are spanning the space of $L^2$-class functions while $L^2(G)=\oplus W^{\dim W}$ is the direct sum 
of all irreducible representations $W$ of $G$ with multiplicity equal to their dimension. We leave details to the reader.   
\end{proof}

This method provides an infinite family of spherical functions for the case where  $M_{\mathfrak G}(\Sigma, B)=\mathfrak G\backslash \mathfrak G^k/\mathfrak G$, when $B\neq \emptyset$ and 
$\pi_1(\Sigma\setminus B)$ is a free group 
of rank $k-1$. Specifically we consider the set 
$V_i, i\in \widehat{\mathfrak G}$ of all isomorphisms types of irreducible representations of $\mathfrak G$. 
Then $V_{i_1}\otimes \cdots V_{i_k}$ form a representation of $\mathfrak G^k$. 
We should restrict to those unitary representations for which 
$V_{i_1}\otimes \cdots \otimes V_{i_k}$ has a fixed $\mathfrak G$-vector. For each $u,v\in B_I$ in some basis 
$B_I$ of the space of $\mathfrak G$-invariants $H^0(\mathfrak G, V_{i_1}\otimes \cdots \otimes  V_{i_k})$ we have the spherical 
function 
\[ \phi_{u,v, I} (x) = \langle V_{i_1}\otimes \cdots \otimes V_{i_k}(x) u, v\rangle\]
where $I=(i_1,\ldots,i_k)$. 
The (infinite) set of all such functions will separate points of $M_{\mathfrak G}(\Sigma, B)=\mathfrak G\backslash \mathfrak G^k/\mathfrak G$. It is now easy to 
construct a single function taking values in the series in several variables with matrix coefficients: 

\[\Phi(x)= \sum_{I, (u,v)} \frac{1}{I!} (\phi_{I,u,v}(x))_{u,v\in B_I}) X^I\]
A direct consequence of Proposition \ref{sphericalseparate} is:
\begin{proposition}
Assume that $\mathfrak G$ is a compact Lie group.  
Then $\Phi$ is a complete invariant for $M_{\mathfrak G}(\Sigma, B)$, 
namely it separates its points: $\Phi(x)=\Phi(y)$ iff $x=y$.  
\end{proposition}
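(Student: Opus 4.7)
The plan is to reduce the statement directly to Proposition \ref{sphericalseparate}, applied to the pair $K=\mathfrak G$ diagonally embedded in $H=\mathfrak G^k$. Since $M_{\mathfrak G}(\Sigma,B)$ has already been identified in the text with the double coset space $\mathfrak G\backslash\mathfrak G^k/\mathfrak G$ (when $\pi_1(\Sigma\setminus B)$ is free of rank $k-1$), Proposition \ref{sphericalseparate} immediately yields that the unitary $\mathfrak G$-spherical functions on $\mathfrak G^k$ separate its points.

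The next step is to recognize the matrix coefficients $\phi_{I,u,v}$ appearing in the definition of $\Phi$ as exactly the family of $K$-spherical functions supplied by that proof. By the Peter--Weyl theorem applied to the compact Lie group $\mathfrak G^k$, every irreducible unitary representation of $\mathfrak G^k$ is an outer tensor product $V_I=V_{i_1}\otimes\cdots\otimes V_{i_k}$ of irreducibles of $\mathfrak G$, and the subspace of vectors fixed by the diagonal $\mathfrak G$-action is precisely the $\mathfrak G$-invariant subspace $H^0(\mathfrak G,V_I)$. Hence the matrix coefficients $\phi_{I,u,v}(x)=\langle V_I(x)u,v\rangle$ with $u,v$ running over a basis $B_I$ of $H^0(\mathfrak G,V_I)$ are bi-$\mathfrak G$-invariant, and as $I$ varies over $\widehat{\mathfrak G}^k$ they form a dense family in the space of bi-$\mathfrak G$-invariant $L^2$-functions on $\mathfrak G^k$, by the very proof of Proposition \ref{sphericalseparate}.

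To finish, I would observe that if $\Phi(x)=\Phi(y)$ as formal power series in the variables $X^I$, then comparing the matrix-valued coefficients for each multi-index $I$ gives $\phi_{I,u,v}(x)=\phi_{I,u,v}(y)$ for every $I$ and every pair $u,v\in B_I$; by the separation property just established this forces the classes of $x$ and $y$ in $\mathfrak G\backslash\mathfrak G^k/\mathfrak G$ to coincide. The converse is immediate because each $\phi_{I,u,v}$ is well-defined on the double coset. I expect the main (and essentially only) conceptual point requiring care to be the identification of the diagonal $\mathfrak G$-invariants in $V_I$ with the abstract $K$-invariants used in Proposition \ref{sphericalseparate}, together with checking that the Peter--Weyl description of $\widehat{\mathfrak G^k}$ as tensor products of elements of $\widehat{\mathfrak G}$ is compatible with this invariance; once this bookkeeping is done, the statement is a direct application of the preceding results.
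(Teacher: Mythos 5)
Your proof is correct and is essentially the same argument the paper intends: the statement is indeed just an unpacking of Proposition \ref{sphericalseparate} applied to $K=\mathfrak G$ diagonal in $H=\mathfrak G^k$, using the identification of $M_{\mathfrak G}(\Sigma,B)$ with the double coset space and the fact that comparing coefficients of $\Phi$ recovers all the $\phi_{I,u,v}$. The paper simply declares the proposition ``a direct consequence'' of \ref{sphericalseparate}, and the bookkeeping you supply (identifying the irreducibles of $\mathfrak G^k$ as outer tensor products and the diagonal $\mathfrak G$-invariants as the $K$-invariants) is exactly what makes that assertion precise.
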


Neretin (\cite{Ner10}) considered  the case $\mathfrak G=SU(2)$ and expressed (a modified version of) the algebraic function $\Phi$ 
as a determinant. In this case we know that $B_I$ is indexed by the set of partitions $\alpha=(\alpha_{st})_{s,t=1,\ldots,k}$ with 
\[ \sum_{t}\alpha_{st}=i_s\]
Then we consider 
\[ \Phi_N=\sum_{I, (\alpha_{st})} \frac{1}{\alpha!\beta!} \prod_{s,t} {\bf x}^{\alpha}{\bf y}^{\beta}(\phi_{I,\alpha,\beta})\]
where we set ${\bf x}^{\alpha}=\prod_{s,t}x_{st}^{\alpha_{st}}$, $\alpha!=\prod_{s,t}\alpha_{st}!$.  
Then the closed formula  of \cite{Ner10} reads:  
\[ \Phi_N(A) = \det(1-A X A^{\perp} Y)^{-1/2}\]
for $A\in SU(2)^k$, where $X=(X_{ij})$, $Y=(Y_{ij})$  are matrices of blocks of the form  
$X_{ij}=\left(\begin{array}{cc} 0 & x_{ij}\\
-x_{ij} & 0\\
\end{array}\right)$, $Y_{ij}=\left(\begin{array}{cc} 0 & y_{ij}\\
-y_{ij} & 0\\
\end{array}\right)$,
and $x_{ij}, y_{ij}$ are variables. 

In particular we obtain: 
\begin{proposition}
To any representation $R:G\to SU(2)$ of the group $G$ we have associated a polynomial valued invariant 
map $\Phi_R: M_G(\Sigma, B)\to \C[X, Y]$, given by:  
\[ \Phi_R(a)=\det(1- R(A)X R(A)^{\perp} Y).\]
In particular, this holds when the group $G$ is the braid group $B_3$ and $R$ is the Burau representation 
for a parameter within the unit circle $U(1)$, the map $\Phi_{R}$ providing then 
invariants of braided surfaces of degree 3 with nontrivial branch locus. 
\end{proposition}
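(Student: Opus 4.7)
The plan is to verify this proposition as a direct packaging of the general machinery developed in this section combined with Neretin's explicit formula, together with a check that the Burau representation of $B_3$ at unit-circle parameters actually lands in the relevant unitary group.

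First I would unravel the Neretin formula $\Phi^2(A)=\det(1-AXA^{\perp}Y)^{-1/2}$ expanded in the matrix of variables $X,Y$: the coefficients are exactly the $SU(2)$-spherical matrix coefficients $\phi_{I,\alpha,\beta}$ associated to $V_{i_1}\otimes\cdots\otimes V_{i_k}$ at $SU(2)$-invariant vectors, so by construction they are invariant under diagonal conjugation of $SU(2)$ on $SU(2)^k$. Consequently $\Phi^2$ descends to a $\C[X,Y]$-valued function on $M_{SU(2)}(\Sigma,B)=SU(2)^k/SU(2)$ (with $k=2g+m-1$ the rank of the free group $\pi_1(\Sigma\setminus B)$ when $B\neq\emptyset$).

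Second I would invoke the pullback map $R_{*}:M_G(\Sigma,B)\to M_{SU(2)}(\Sigma,B)$, which was shown at the beginning of the section to be well-defined on conjugacy classes. The composite $\Phi^2_R=\Phi^2\circ R_{*}$ is then automatically well-defined, polynomial-valued, and invariant under the braid conjugation relevant to strong equivalence of braided surfaces. For the braided surface interpretation I would then combine this with the corollary from Section 2 that strong equivalence classes of degree $n$ braided surfaces over $\Sigma$ with branch locus $B$ are in bijection with $M_{B_n}(\Sigma,B)$; taking $n=3$ and $G=B_3$ converts $\Phi^2_R$ into an invariant of degree $3$ braided surfaces.

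The main point requiring justification is the claim that the Burau representation of $B_3$ at $t\in U(1)$ lands in $SU(2)$ so that Neretin's formula applies. Here I would invoke Squier's theorem: the reduced Burau representation of $B_n$ at $t=e^{i\theta}$ preserves a non-degenerate Hermitian form whose signature depends on $\theta$, and for $n=3$ one can identify a range of parameters on the unit circle for which this form is definite, so that after a suitable rescaling the representation factors through $SU(2)$. The main obstacle is exactly this signature computation: outside the definite range, Squier's form is indefinite and one only obtains a map into $SU(1,1)$. However, the construction of spherical matrix coefficients noted earlier in the section goes through for any non-degenerate Hermitian form, so the invariant $\Phi^2_R$ should extend formally to those parameters as well, at the cost of losing the positivity interpretation underlying Proposition \ref{sphericalseparate}.
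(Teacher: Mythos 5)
Your reconstruction matches the paper's implicit reasoning: the proposition is stated without a separate proof and follows directly from Neretin's computation, the pullback map $R_*$ constructed at the start of the section, and the Corollary in Section 2 identifying $M_{B_n}(\Sigma,B)$ with strong equivalence classes of degree-$n$ braided surfaces. You are right to flag the Squier-form subtlety, which the paper glosses over: the reduced Burau representation of $B_3$ at $t=e^{i\theta}$ preserves a non-degenerate Hermitian form, but this form is positive definite only on an arc of the unit circle (roughly $\theta\in(-2\pi/3,2\pi/3)$), and even there $\det\rho(\sigma_i)=-t\neq 1$, so one must twist by a unit-modulus scalar character (which respects the braid relation $\sigma_1\sigma_2\sigma_1=\sigma_2\sigma_1\sigma_2$) before landing in $SU(2)$ rather than merely $U(2)$. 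The final step of your argument — extending to the indefinite parameters via the remark that matrix coefficients make sense for any non-degenerate Hermitian form — is sound at the level of the abstract spherical-function construction, but note that Neretin's closed-form determinant $\det(1-AXA^{\perp}Y)^{-1/2}$ is derived specifically for $SU(2)$, where $A^{\perp}$ encodes complex conjugation via the symplectic form; for $SU(1,1)$ the analogous identity would need to be rederived. The proposition is therefore best read as tacitly restricting to the parameter range where Squier's form is definite.
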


Often we can reduce the matrix-valued function $\Phi$ to a finite polynomial in more variables. In fact, for any $\mathfrak G$ as above $M_{\mathfrak G}(\Sigma, B)$ is homeomorphic to a finite CW complex. In particular it admits an 
embedding $\varphi:M_{\mathfrak G}(\Sigma, B)\to \R^n$. 
The components of $\varphi$ form therefore a complete 
invariant for $M_{\mathfrak G}(\Sigma, B)$ and so there is a 
much simpler invariant than $\Phi$. Nevertheless 
we lack an exact form of $\varphi$, in general.

In many interesting cases $\mathfrak G\backslash\mathfrak  G^k/\mathfrak G$ has the structure of an (affine) algebraic variety over $\C$. 
Thus we can expect to have a nice algebraic embedding $\varphi$. 
Such an embedding can be obtained from a basis of the algebra of regular functions on 
$M_{\mathfrak G}(\Sigma, B)$. 

This is the case of $\mathfrak G=U(n)$, for instance. 
Let $A=(A_1,A_2,\ldots,A_k)\in U(n)^k$,  $I=\{i_1,i_2,\ldots, i_j\}$, $1\leq i_1 <i_2 <\ldots < i_j\leq k$ and $\varepsilon:I\to \{1,\star\}$. We denote by 
\[A^{I; \varepsilon} = A_{i_1}^{\varepsilon(i_1)}A_{i_2}^{\varepsilon(i_2)}\cdots A_{i_j}^{\varepsilon(i_j)}\]
where $A^\star$ denotes $(A^{-1})^T$. 
Procesi proved in (\cite{Procesi}, Thm. 11.2) that the set of trace functions 
\[ \{ {\rm tr}(A^{I; \varepsilon}) |   \; I\subseteq \{1,2,\ldots,k\}, \; \varepsilon:I\to \{1,\star\} \}\]
over all possible $I$ and $\varepsilon$ represent a basis 
of the algebra of regular functions on $U(n)\backslash U(n)^k/U(n)$. 
Let $x_i$ be noncommutative variables, 
\[ X^{I; \varepsilon}= x_{i_1}^{\overline{\varepsilon}(i_1)}x_{i_2}^{\overline{\varepsilon}(i_2)}\cdots A_{x_j}^{\overline{\varepsilon}(i_j)}\]
where $\overline{\varepsilon}(i)=\varepsilon(i)$, when the later equals $1$ and 
$-1$, otherwise. 
Then the noncommutative Laurent polynomial  
\[ \Psi(A)=\sum_{I, \varepsilon} {\rm tr}(A^{I; \varepsilon}) X^{I; \varepsilon}\]
separates points of $U(n)\backslash U(n)^k/U(n)$.

\begin{proposition}
To any unitary representation $R:G\to U(n)$ of the group $G$ we have associated a 
noncommutative Laurent polynomial valued invariant 
map $\Psi_R$ on $M_G(\Sigma, B)$, given by:  
\[ \Psi_R(a)=\Psi(R(a))\]
In particular, this holds when the group $G=B_n$ and the $R$ is the Burau representation 
for a parameter within the unit circle $U(1)$, the map $\Psi_{R}$ providing then 
invariants of braided surfaces of degree $n$ with nontrivial branch locus. 
\end{proposition}

\begin{remark}
There is a similar result for the noncommutative polynomial  
\[ \Psi'_R(A)=\sum_{I\subseteq \{1,2,\ldots,k\}} {\rm tr}(A^{I}) X^{I}\]
associated to a linear representation  $R:G\to GL(n)$ of the group $G$, which now  
separates points of $GL(n)\backslash GL(n)^k/GL(n)$, following (\cite{Procesi}, section 3). 
\end{remark}

\subsection{Spherical functions for discrete groups} 
Consider now the case of a discrete group $G$.  As observed  above it is enough 
to consider that $B\neq\emptyset$, so that 
$M_G(\Sigma, B)=G\backslash G^k/G$ is a space of cosets. 
In contrast to the case of a compact group $G$, now spherical functions do not 
necessarily separate points of $M_G(\Sigma,B)$. 

For a discrete group $H$ we denote by $\widehat{H}$ its {\em profinite} completion.
There is a natural map $i:H\to \widehat{H}$ which is injective if and only if 
$H$ is residually finite. If $K\subseteq H$ is a subgroup, we denote by $\overline{K}$ the closure 
of $i(K)$ into $\widehat{H}$. The map $i$ induces a map between cosets 
\[ \iota: K\backslash H/K  \to  \overline{K}\backslash \widehat{H}/\overline{K}\]
\begin{definition}\label{separated}
Two cosets of  $K\backslash H/K$  are {\em profinitely separated} if their images 
by $\iota$ are distinct. 
\end{definition}

One case of interest is when $K=G$ is embedded diagonally within $H=G^k$. 
It is easy to see that $\widehat{G^k}$ is isomorphic to $\widehat{G}^k$ and we will identify them in the sequel. If $G$ is embedded diagonally into $G^k$, 
then its closure $\overline{G}$ into $\widehat{G}^k$ is isomorphic to the 
image of $\widehat{G}$ into $\widehat{G}^k$ by its diagonal embedding. Then the map 
$\iota$ from above 
\[ \iota: G\backslash G^k/G  \to  \widehat{G}\backslash \widehat{G}^k/\widehat{G}\]
sends a double coset mod $G$ into its class mod $\widehat{G}$. 
This notion encompasses more classic notions, as the conjugacy separability of 
the group $G$, when we take $k=2$ above.

The main result of this section is: 

\begin{theorem}\label{profinite}
Assume that $H$ is finitely generated and $K\subseteq H$ is a subgroup. 
Two cosets of $K\backslash H/K$ are separated by some Hermitian spherical 
function if and only if they are profinitely separated. 
\end{theorem}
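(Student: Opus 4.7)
The plan is to prove both directions separately: the easy direction by pullback from a separating finite quotient, and the harder direction by Mal'cev-style arithmetic of finitely generated linear groups.

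For ($\Leftarrow$), suppose $x, y \in H$ represent profinitely separated cosets. By definition there exists a finite quotient $q: H \to Q$ such that $q(x)$ and $q(y)$ lie in distinct double cosets of $q(K) \backslash Q / q(K)$. Since $Q$ and $q(K) \subseteq Q$ are finite, Proposition \ref{sphericalseparate} provides a unitary $q(K)$-spherical function $\bar\phi$ on $Q$ with $\bar\phi(q(x)) \neq \bar\phi(q(y))$. If $\bar\phi$ is the matrix coefficient of a unitary representation $\bar\rho: Q \to U(V)$ and vectors $v, w \in V^{q(K)}$, then $\rho = \bar\rho \circ q: H \to U(V)$ is a unitary representation of $H$ with $v, w \in V^K$, and the pullback $\phi = \bar\phi \circ q$ is a unitary (hence Hermitian) $K$-spherical function on $H$ separating $x$ from $y$.

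For ($\Rightarrow$) I would argue the contrapositive. Assume $x, y$ are profinitely equivalent and let $\phi(h) = \langle \rho(h) v, w\rangle$ be an arbitrary Hermitian $K$-spherical function, arising from a finite-dimensional representation $\rho: H \to GL_n(\C)$ preserving a Hermitian form with Gram matrix $J$, with $v, w \in V^K$. Let $R \subseteq \C$ denote the subring generated over $\Z$ by the entries of $\rho(s)^{\pm 1}$ for $s$ running over a finite generating set of $H$, the coordinates of $v$ and $w$, the entries of $J$, and all of their complex conjugates. Then $R$ is a finitely generated $\Z$-algebra, $\rho(H) \subseteq GL_n(R)$, and $\phi(h) \in R$ for every $h$ because $\phi(h)$ is a fixed polynomial expression in the generators of $R$.

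For each maximal ideal $\mathfrak m \subset R$, the residue field $R/\mathfrak m$ is finite by the Nullstellensatz for finitely generated $\Z$-algebras, so reduction modulo $\mathfrak m$ yields a homomorphism $\rho_{\mathfrak m}: H \to GL_n(R/\mathfrak m)$ onto a finite group $Q_{\mathfrak m}$. Applying profinite equivalence of $x, y$ to this finite quotient produces $k_1, k_2 \in K$ with $\rho(k_1 x k_2) \equiv \rho(y) \pmod{\mathfrak m}$, whence $\phi(k_1 x k_2) - \phi(y) \in \mathfrak m$. Bi-$K$-invariance of $\phi$ forces $\phi(x) - \phi(y) \in \mathfrak m$. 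Since $\mathfrak m$ was arbitrary, $\phi(x) - \phi(y)$ lies in the Jacobson radical of $R$; being a finitely generated $\Z$-algebra, $R$ is a Jacobson ring, so its Jacobson radical equals its nilradical, and this nilradical is zero because $R \subseteq \C$ is reduced. Thus $\phi(x) = \phi(y)$.

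The main obstacle is the careful setup of $R$: it must simultaneously contain the conjugates $\bar v, \bar w, \bar J$ so that the sesquilinear expression defining $\phi(h)$ genuinely lies in $R$, and the entries of $\rho(s^{-1})$ so that the mod-$\mathfrak m$ reduction lands in $GL_n$ rather than merely in $M_n$. Conceptually, the proof shows that Hermitian matrix coefficients on a finitely generated group are detected, after reducing modulo any maximal ideal of a suitable arithmetic ring, by a finite quotient of $H$, which is a packaged form of Mal'cev's residual finiteness theorem.
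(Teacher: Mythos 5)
Your proposal is correct and follows essentially the same strategy as the paper: the easy direction pulls a separating spherical function back from a finite quotient via Proposition \ref{sphericalseparate}, and the hard direction realizes the representation over a finitely generated subring of $\C$ and reduces modulo a prime to produce a finite quotient on which the cosets are still distinguished. Your Jacobson-radical formulation is in fact a slight tightening of the paper's step ``for all but finitely many prime ideals $\mathfrak p$ in $\mathcal O$, $\phi_{u,v}(x)\not\equiv\phi_{u,v}(y)\ (\mathrm{mod}\ \mathfrak p)$'' --- a nonzero element of a finitely generated $\Z$-algebra may lie in infinitely many prime ideals, but never in every maximal ideal, and passing through the Jacobson radical (together with finiteness of residue fields) is the precise version of what both proofs need.
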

\begin{proof}
Let $x$ and $y$ be cosets which cannot be distinguished by spherical functions 
associated to Hermitian representations of $H$, and in particular by  
spherical functions associated to finite representations. 
Let now $F$ be a finite quotient of $H$, $K_F$ be the image of  $K$ in $F$. 
Proposition \ref{sphericalseparate} shows that spherical functions associated to 
linear representations of $F$ separately precisely the points of $K_F\backslash F/ K_F$.   
Then the images of $x$ and $y$ should coincide in $K_F\backslash F/ K_F$, for any $F$ and 
hence $\iota(x)=\iota(y)$.

Conversely, a finite dimensional Hermitian representation $V$   
of $H$  is defined over some finitely generated ring $\mathcal O\subset \C$. 
By enlarging $\mathcal O$ we can suppose that $\langle \, , \,\rangle$ has entries from $\mathcal O$. 
We can assume, by further enlarging $\mathcal O$, that there is a basis $B$ of  
$V^H$ consisting of vectors whose coordinates belong to $\mathcal O$. 

Suppose that for some $H$-invariant vectors  $u$ and $v$ 
the spherical function $\phi_{u,v}$ separates the cosets $x$ and $y$.  
We can take then $u,v\in \mathcal O \langle B\rangle$. Further, for all but finitely many prime ideals 
$\frak p$ in $\mathcal O$, we have 
\[\phi_{u,v}(x)\not\equiv \phi_{u,v}(y) ({\rm mod} \; \frak p)\in \mathcal O/\mathfrak p.\]
Now let $W$ be the reduction mod $\mathfrak p$  of the $H$ representation on $V$. 
These are finite representations and the invariant subspace 
$W^K$ contains the 
reduction mod $\frak p$ of $V^K$. 
Denote by $\overline{w}$ the reduction mod $\mathfrak p$ of the vector $w\in \mathcal O\langle B\rangle$. It follows that $\overline{u}$ and 
$\overline{v}$ belong to $W^H$. As spherical functions are bilinear, for any $z\in H$ we have: 
\[ \phi_{\overline{u},\overline{v}}(z) \equiv \phi_{u,v}(z) \in \mathcal O/\mathfrak p.\]
In particular, the spherical function  $\phi_{\overline{u},\overline{v}}$ 
associated to a finite representation 
distinguishes $x$ from $y$.  
This implies that $x$ and $y$ are profinitely separated. 
\end{proof}

\begin{remark}
The profinite separability of all cosets in $B_n\backslash B_n^k/B_n$  and their 
mapping class group generalizations seems to be widely open. 
\end{remark}

\subsection{Hurwitz equivalence}
To step from strong equivalence to the usual (i.e. Hurwitz) equivalence amounts of studying 
the action of $\Gamma(\Sigma\setminus B)$ on the vector space of functions on $M_G(\Sigma, B)$.  
However the previous approach using pull-backs of spherical functions from compact Lie groups leads
to a dead end. In fact, we have the following result due to Goldman for $SU(2)$ and 
to Pickrell and Xia for a general compact group: 

\begin{theorem}[\cite{PickrellXia1,PickrellXia2}]
If $\mathfrak G$ is a compact connected Lie group and $\Sigma\setminus B$ is hyperbolic then the action of $\Gamma(\Sigma\setminus B)$ on 
$M_{\mathfrak G}(\Sigma_g, B)$ is ergodic with respect to the quasi-invariant measure. 
\end{theorem}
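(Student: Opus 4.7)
The plan is to follow the strategy of Goldman (who treated $\mathfrak G=SU(2)$) extended by Pickrell-Xia to general compact connected Lie groups. The first step is to equip $M_{\mathfrak G}(\Sigma\setminus B)$ with the Atiyah-Bott-Goldman Poisson structure, whose symplectic leaves are the fixed-conjugacy-class strata $M_{\mathfrak G}(\Sigma, B, {\mathbf c})$. Each leaf carries a canonical Liouville measure which is $\Gamma(\Sigma\setminus B)$-invariant, and these leafwise measures assemble into the quasi-invariant measure in the statement; the ergodicity claim then says that the $\Gamma(\Sigma\setminus B)$-invariant $L^2$ functions on each leaf are constant.

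The second step is to use Goldman's fundamental formula: for a simple closed curve $\gamma\subset \Sigma\setminus B$ and a conjugation-invariant smooth function $f\colon\mathfrak G\to\R$, the pull-back $f_\gamma(\rho)=f(\rho(\gamma))$ is a Hamiltonian on $M_{\mathfrak G}(\Sigma\setminus B)$ whose time-$t$ flow is the twist flow along $\gamma$; with the natural normalization coming from the Killing form, the time-$1$ map is exactly the action of the Dehn twist $T_\gamma$. Consequently the mapping class group action is presented as compositions of Hamiltonian flows of these trace-type functions, which can be analyzed explicitly in terms of the symplectic geometry.

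The crux, and the main obstacle, is to rule out nontrivial $\Gamma(\Sigma\setminus B)$-invariant $L^2$ functions on each leaf. For a pants decomposition with pants curves $\gamma_1,\dots,\gamma_N$ one obtains a generalized Fenchel-Nielsen chart on a dense open subset of the leaf whose twist coordinates live in the maximal torus of the centralizer of $\rho(\gamma_i)$. Dehn twists along the pants curves then act as translations of these twist tori by a function of the length coordinates, which gives ergodicity fiberwise via Weyl equidistribution, while Dehn twists along curves crossing the pants decomposition provide the transverse mixing needed to identify different fibers. The technical heart, treated differently for $SU(2)$ and for a general compact $\mathfrak G$, is the analogue of Goldman's separation-of-points argument: one must show that the collection of trace-type functions $\{f_\gamma\}$ associated to simple closed curves separates orbits on a conull subset of each leaf. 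For general $\mathfrak G$ this requires exploiting the fact that the commutator map $\mathfrak G\times\mathfrak G\to\mathfrak G$ is a submersion over a dense open set, together with an induction on the genus that propagates the separation statement from a pair of pants to the full surface.
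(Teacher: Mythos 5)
This statement is imported from \cite{PickrellXia1,PickrellXia2} (building on Goldman's theorem for $\mathfrak G=SU(2)$); the paper does not reproduce a proof, so there is no in-paper argument to compare yours against. As a summary of that literature your sketch is essentially right: the Atiyah--Bott--Goldman Poisson structure whose symplectic leaves are the fixed-conjugacy-class strata, Goldman's identification of the Dehn twist $T_\gamma$ with the time-one map of the Hamiltonian flow of an invariant function of the holonomy $\rho(\gamma)$, and the reduction of ergodicity to a separation-of-points statement for such trace-type functions, relying on the submersivity of the commutator map on a dense open set. One point worth flagging: the generalized Fenchel--Nielsen picture with twist tori and Weyl equidistribution is really Goldman's $SU(2)$ argument; for a general compact connected $\mathfrak G$ the centralizers $\mathrm{Z}(\rho(\gamma_i))$ need not be tori and the action-angle chart does not carry over verbatim, so Pickrell and Xia instead run an induction that cuts the surface into one-holed tori and boundary pieces and analyzes the twist flows and the commutator map directly there. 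You acknowledge that the technical heart differs in the two cases, so this is not a gap, but the specific pants-decomposition-plus-equidistribution mechanism you describe is the $SU(2)$ story rather than the general one.
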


In particular there are no continuous functions on $M_{\mathfrak G}(\Sigma_g, B)$ 
which are invariant under  
the $\Gamma(\Sigma\setminus B)$ action, other than the constants. Pull-backs of 
spherical functions associated to compact groups could only provide constant functions on 
$\mathcal M_G(\Sigma, B)$. 
In order to get further insight by this method we have to step to non-compact Lie groups and 
the corresponding higher Teichm\"uller theory. As in the previous section, 
components of $\mathcal M_{\mathfrak G}(\Sigma, B)$ which have a CW complex structure, as 
Hitchin components,  will provide functions on the corresponding 
subsets of $\mathcal M_G(\Sigma, B)$.

\bibliographystyle{amsplain}

\end{document}